\renewcommand{\leq}{\leqslant}
\renewcommand{\geq}{\geqslant}
\numberwithin{equation}{section}
\newcommand{\C}{{\cal C}}
\newcommand{\R}{\mathbb{R}}
\newcommand{\norm}[1]{\ensuremath{\left\|#1\right\|}}
\newcommand{\abs}[1]{\ensuremath{\left|#1\right|}}
\newcommand{\dx}{\ensuremath{\, dx}}
\newcommand{\dt}{\ensuremath{\, dt}}
\DeclareMathOperator{\divg}{div}
\newtheorem{thm}{Theorem}
\newtheorem{lem}{Lemma}
\newtheorem{prop}{Proposition}
\newtheorem{remark}{Remark}
\title{Mixed Finite Element Method and numerical analysis of a convection-diffusion-reaction model in a porous medium}
\author[1]{\'Elo\"ise Comte}
\affil[1]{La Rochelle Universit\'e, MIA, 23 Avenue A. Einstein, BP 33060, 17031 La Rochelle}
\date{June 2019}
\begin{document}

\maketitle

\abstract{A hydrogeological model for the spread of pollution in an aquifer is considered. The model consists in a convection-diffusion-reaction equation involving the dispersion tensor which depends nonlinearly of the fluid velocity. We introduce an explicit flux in the model and use a mixed Finite Element Method for the discretization. We provide existence, uniqueness and stability results for the discrete model. A convergence result is obtained for the semi-discretized in time problem and for the fully discretization.  }

\vspace{1cm}

\noindent {\bf Keywords:} Parabolic and elliptic PDEs - Finite Element Method - Numerical analysis

\section{Description of the model}
\label{sec:themodel}
The model describes the transport of a pollutant in the groundwater. For the lake of simplicity, we consider that there is only one pollutant of interest. The latter will be modeled by a reaction-convection-diffusion equation. Besides the pollutant concentration, the velocity is also a variable of the model, expressed as a function of the hydraulic head through the Darcy law. 

Let $\Omega \subset \R^N$, $N\leq 3$, a bounded domain with a boundary $\partial \Omega \in \C^2$. Time horizon is denoted by $T$, with $0 < T \leq \infty$. Let $\Omega_T = \Omega \times (0,T)$. 

Denoted by $p$ the pollutant load, $c$ its concentration, $v$ the velocity of the mixture and $\phi$ the hydraulic head, the model is the following PDE's system: 
\begin{eqnarray}
\label{eqetat}
R \psi \partial_t c  -(\kappa \nabla \phi)\cdot \nabla c - \divg(S(v)\psi \nabla c) = -r(c) - g c + p\chi_{S}  \text{ in $\Omega_T$,} 
\end{eqnarray}
\begin{eqnarray}
\label{eqetatphi}
\divg(v) =g  \text{, $v=-\kappa \nabla \phi$ in $\Omega_T$,} 
\end{eqnarray}
where function $\chi_{\mathcal{S}}$ is the indicatrice of a part $\mathcal{S}$ of $\bar{\Omega}$. 

We assume that the eventual adsorption of the pollutant by the soil is linear and instantaneous reaction (see the arguments in de Marsily \cite{Marsily}, page 251). Coefficient $R$ is the corresponding retardation factor. 

The soil porosity is described by function $\psi$. The structure of the soil is also described in the fluid mobility tensor, $\kappa$,  rating the permeability of the underground with the viscosity of the fluid. 
The dispersion tensor is denoted by $S(v)$. 
Following Scheidegger \cite{Scheidegger1974}, we consider the nonlinear dependency of the longitudinal and transverse components of the dispersion on the velocity: tensor $S(v)$ is such that 
\begin{eqnarray}  
\label{es:coeffdiffusion}
S(v)=S_m \mathrm{Id}+ S_p(v), \quad S_p(v) = \left|v\right| \Bigl(\frac{\alpha_L}{\vert v\vert^2} v \otimes v + \alpha_T
( \mathrm{Id}-\frac{1}{\vert v\vert^2} v \otimes v ) \Bigr)
\end{eqnarray}
where $S_m$, $\alpha_L$ and $\alpha_T$ are respectively the diffusion coefficient, the longitudinal and transverse dispersion factors.
Here $u \otimes v$ denotes the tensor product, $(u \otimes v)_{ij}=u_i v_j$, while $u \cdot v$ denotes the scalar product, $u \cdot 
v= \sum_{i=1}^N u_i v_i$ and $\vert u \vert^2= u \cdot u$. The identity matrix is denoted by $\mathrm{Id}$.

The result of the reaction of the pollutant, transformed it in other miscible species, is modeled by $r(c)$. Classical isotherms are described by linear functions, in the form $r(c)=kc$ of by Freundlich function $r(c)=kc^{k'}$, or by Langmuir function $r(c)=kc/(1+k^{'} c)$, $(k,k^{'}) \in \R^2_+$. 

The other source term $g$ takes into account the contribution from the soil itself and other inputs. 

The model \eqref{eqetat}-\eqref{eqetatphi} is completed by the following initial and boundary conditions: 
\begin{eqnarray}
\label{condetat}
\chi S(v) \nabla c \cdot n + (1-\chi)c = 0 \text{ on $\partial \Omega \times (0,T)$, $c{|_{t=0}}=c_0$ in $\Omega$,} 
\end{eqnarray}
with $\chi=0$ or $1$ if we use Dirichlet or Neumann boundary conditions, and
\begin{eqnarray}
\label{condetatphi}
\tilde{\chi} \kappa \nabla \phi \cdot n +(1-\tilde{\chi})\phi = 0 \text{ on $\partial \Omega \times (0,T)$,} 
\end{eqnarray}
with $\tilde{\chi}=0$ or $1$. 

In order to avoid futur problems from the convective term in the numerical tests, we establish a more physical formulation, by introducing the flux explicitly given by $v_c = -S(v) \psi \nabla c + vc$. 
We use the formulation $\divg (v c) = c \divg (v) + v \cdot \nabla c$ and $\divg(v)=g$, to get: 
\begin{eqnarray}
\label{eqetatbis}
R \psi \partial_t c + \divg (v_c) = -r(c)  + p\chi_{S} \text{ in $\Omega_T$,} 
\end{eqnarray}
with
\begin{eqnarray}
\label{vc}
v_c=-S(v) \psi \nabla c + v c,
\end{eqnarray}
completed by following initial and boundary conditions: 
\begin{eqnarray}
\label{condetatbis}
\chi S(v) \nabla c \cdot n +(1-\chi)c = 0 \text{ on $\partial \Omega \times (0,T)$, $c{|_{t=0}}=c_0$ in $\Omega$.} 
\end{eqnarray}
Let
\begin{eqnarray*} 
\tilde{p}=
\begin{cases} 
p \text{ if $x \in \mathcal{S}$,} \\ 
0 \text{ else.} 
\end{cases}
\end{eqnarray*}
By commodity, we will denote $\tilde{p}=p$.

\section{Assumptions and preliminary results} \label{sec:results}

\subsection{Assumptions} \label{subsec:assumptions}
We consider the following assumptions:
\begin{itemize}
\item $S(v)$ is supposed to be given by \eqref{es:coeffdiffusion}. According to the definition \eqref{es:coeffdiffusion} of the dispersion tensor, we get 
\begin{eqnarray}
\label{eq5.6bis}
S(v) \xi \cdot \xi \geq (S_m + \alpha_T \abs{v}) \abs{\xi}^2, \abs{S(v)\xi} \leq (S_m + \alpha_L \abs{v}) \abs{\xi} \text{ $\forall \xi \in \R^N$}.
\end{eqnarray}
We assume that $$S_m > 0, \alpha_L > \alpha_T \geq 0.$$ 
The case without dispersion, \textit{i.e.} $\alpha_L=\alpha_T=0$, is treated by Kumar et al \cite{Kumar}. We assume that there exist reals $\kappa_{-}$ and $\kappa_{+}$ with $0 < \kappa_{-} \leq \kappa_{+}$ such that the permeability tensor satisfies:
\begin{eqnarray*}
\kappa \xi \cdot \xi \geq \kappa_{-} \abs{\xi}^2 \text{ and } \abs{\kappa \xi} \leq \kappa_{+} \abs{\xi}, \text{ $\forall \xi \in \R^N$ ;}
\end{eqnarray*}
\item isotherm $r$ is $\C^1$, concave, derivable with a bounded derivate on $\R_+$ and such that there exists $r_{+} \in \R^+$ $$ \abs{r(x)} \leq r_{+} \abs{x}, \forall x \in \R  \text{ ;}$$
\item the retardation factor $R$ is strictly positive; 
\item the initial concentration is $c_0 \in H^1(\Omega)$ with $0 \leq c_0 \leq 1$ a.e. in $\Omega$ ; 
\item function $g$ is $L^{\infty}(\Omega_T)$ ;
\item porosity $\psi$ is $L^{\infty}(\Omega)$ and there exist reals $\psi_{-}$ and $\psi_{+}$ such that
$$
0 < \psi_{-} \leq \psi(x) \leq \psi_{+} \text{, a.e. } x \in \Omega.
$$
\end{itemize} 
\vspace{2mm}
We completed the assumptions on the dispersion tensor $S(v)$ given by \eqref{es:coeffdiffusion}: it is symmetrical and 
\begin{eqnarray}
\label{eq5.7.0} 
S(v)^{-1} \xi \cdot \xi \geq \frac{1}{S_m + \alpha_T \abs{v}} \Bigl( 1 - \frac{(\alpha_L - \alpha_T) \abs{v}}{S_m+\alpha_T \abs{v}} \Bigr) \abs{\xi}^2 \geq M_{-} \abs{\xi}^2 \text{, } M_{-} > 0 
\end{eqnarray}
for all $\xi \in \R^N$, $N=2$ or $3$. Indeed, according to \eqref{es:coeffdiffusion}, $S(v)$ follows
$$S(v)=(S_m + \alpha_T \abs{v})(Id + E) \text{, with $E=\frac{(\alpha_L - \alpha_T) \abs{v}}{S_m + \alpha_T \abs{v}} \epsilon(v)$},$$
the inverse $(Id + E)^{-1}=Id - E(Id+E)^{-1}$ being well defined, matrix $Id$ and $E$ being positive-definite. \\
According to the symmetrical and the positivity of $(Id+E)$, we can defined $(Id+E)^{\frac{1}{2}}$ and $(Id+E)^{-\frac{1}{2}}$. We get 
$$(Id+E)^{-1} \xi \cdot \xi = \abs{\xi}^2 - E(Id+E)^{-\frac{1}{2}} \xi \cdot (Id + E)^{-\frac{1}{2}} \xi $$
where $E$ is such that $-E \eta \cdot \eta \geq - \frac{(\alpha_L - \alpha_T) \abs{v}}{S_m + \alpha_T \abs{v}} \abs{\eta}^2$ for all $\eta \in \R^N$. Applying this latter result to $\eta = (Id+E)^{-\frac{1}{2}} \xi$, which is such that $\abs{\eta} \leq \xi$, we get 
$$(Id+E)^{-1} \xi \cdot \xi \geq \Bigl( 1 - \frac{(\alpha_L - \alpha_T) \abs{v}}{S_m + \alpha_T \abs{v}} \Bigr) \abs{\eta}^2.$$
Noticing $(S(v))^{\frac{1}{2}}$ the unique symmetrical positive-definite matrix such that 
$$S(v)^{\frac{1}{2}} S(v)^{\frac{1}{2}}=S(v).$$
We deduce from \eqref{eq5.6bis} that there exists $C \geq 0$ such that
\begin{eqnarray}
\label{eq5.7}
\abs{S(v)^{\frac{1}{2}} \xi} \leq C(1+ \abs{v}^{\frac{1}{2}})\abs{\xi} \text{, for all $\xi \in \R^N$.}
\end{eqnarray}
Furthermore, there exists a constant $M_+ >0$ such that
\begin{eqnarray}
\label{eq5.7.1}
\abs{S(v)^{-1} v} \leq M_+.
\end{eqnarray}
Indeed, let $\xi \in \R^N$, $\xi \neq 0_{\R^N}$. The operator $S(v)$ being inversible, let $\mu = S(v)^{-1} \xi$ and $\mu \neq 0_{\R^N}$. We get $\abs{S(v) \mu \cdot \mu} = \abs{\xi \cdot \mu} \leq \abs{\xi} \abs{\mu}$. Otherwise, by definition of $S(v)$, 
\begin{align*}
\abs{S(v)\mu \cdot \mu}&=S(v)\mu \cdot \mu = (S_m + \alpha_T \abs{v}) \abs{\mu}^2 + \frac{\alpha_L - \alpha_T}{\abs{v}} \abs{v \cdot \mu}^2 &&\\
&= (S_m + \alpha_T \abs{v}) \abs{\mu}^2 +  \frac{\alpha_L - \alpha_T}{\abs{v}} \frac{\abs{v \cdot \mu}^2}{\abs{\mu}^2} \abs{\mu}^2 &&\\
&\leq \abs{\xi} \abs{\mu}.&&
\end{align*}
Simplifying by $\abs{\mu}$, and keeping in mind that $\abs{\mu} = \abs{S(v)^{-1} \xi}$, we get: 
\begin{eqnarray}
\label{eq.etoile}
\abs{S(v)^{-1}\xi} \leq \frac{1}{S_m + \alpha_T \abs{v} + \frac{\alpha_L - \alpha_T}{\abs{v}} \frac{\abs{v \cdot \mu}^2}{\abs{\mu}^2}} \abs{\xi}.
\end{eqnarray}
We now estimated $\displaystyle \frac{\abs{v \cdot \mu}^2}{\abs{\mu}^2}$ in the case where $\xi = v$. In this case, $\mu \cdot v = S(v)^{-1} v \cdot v$. From \eqref{eq5.7.0}, $\abs{\mu \cdot v} = \mu \cdot v$ and
$$\abs{\mu \cdot v} \geq M_- \abs{v}^2.$$
Furthermore, from \eqref{eq.etoile} we get $\displaystyle \abs{S(v)^{-1} v} = \abs{\mu} \leq \frac{\abs{v}}{S_m}$. Thus, 
$$\frac{\abs{\mu \cdot v}^2}{\abs{\mu}^2} \geq M_-^2 S_m^2 \abs{v}^2.$$
Using this estimation in \eqref{eq.etoile}, we get
$$\abs{S(v)^{-1}v} \leq \frac{\abs{v}}{S_m + \alpha_T \abs{v} +(\alpha_L - \alpha_T) M_-^2 S_m^2 \abs{v}}$$
and there exists $M_+ > 0$ such that for all $v \in \R^N$, 
$$ \frac{\abs{v}}{S_m + \alpha_T \abs{v} +(\alpha_L - \alpha_T) M_-^2 S_m^2 \abs{v}} \leq M_+.$$
We will use these estimations \eqref{eq5.7} et \eqref{eq5.7.1} in order to control some terms in the form of $\langle S(v)^{-1} v u, S(v)^{\frac{1}{2}} w \rangle$ with $u \in R$, $w \in R^N$. We get
\begin{align*}
\abs{\langle S(v)^{-1} v u, S(v)^{\frac{1}{2}} w \rangle} &\leq M_+ \abs{u} \abs{S(v)^{\frac{1}{2}} w } &&\\
&\leq M_+ C \Bigl(1+ \abs{v}^{\frac{1}{2}}\Bigr) \abs{u} \abs{w}&&
\end{align*}
thus
\begin{eqnarray}
\label{eq5.7.2}
\abs{\langle S(v)^{-1} v u, S(v)^{\frac{1}{2}} w \rangle}  \leq C_{disp} \Bigl(1+ \abs{v}^{\frac{1}{2}}\Bigr) \abs{u} \abs{w}.
\end{eqnarray}

\subsection{Preliminary results} \label{subsec:prelresults}

\begin{prop} \label{prop:phi} 
There exits a unique function $\phi \in L^{\infty }(0,T;H^{1}(\Omega ))$ solving
problem \eqref{eqetatphi}-\eqref{condetatphi}.
Moreover, for $\tilde{\chi}=0$ (Dirichlet boundary conditions), we have: 
\begin{enumerate}
\item If $\phi_{1}\in W^{2,p}(\bar{\Omega})$ with $p>N$ and $\kappa \in (\mathcal{C}^{1}(\bar{\Omega}))^{N\times N}$, then $\phi $  belongs to $L^{\infty
}(0,T;W^{2,p}(\Omega ))$.
In particular,  $v=-\kappa \nabla \phi$ belongs to $L^{\infty }(\Omega _{T})$.
\item If $\phi _{1}\in W^{2,p}(\bar{\Omega}) $ with $p>N/2$, then $\phi $  belongs to $L^{\infty}(\Omega_T)$.
If moreover $\kappa =\kappa^* Id$ with $\kappa^* :\bar{\Omega}\rightarrow {\mathbb{R}}$, $\kappa^* \in C^{1}(\bar{\Omega})$, then  $\phi $  belongs to $L^{\infty}(0,T;W^{1,q}(\Omega ))$ with $q>N$.
In particular,  $v=-\kappa \nabla \phi$ belongs to $L^{\infty }(\Omega _{T})$.
\end{enumerate}
\end{prop}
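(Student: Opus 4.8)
\emph{Proof strategy.} The plan is to exploit that, once $v=-\kappa\nabla\phi$ is eliminated, the system \eqref{eqetatphi}--\eqref{condetatphi} becomes, for almost every fixed $t\in(0,T)$, the \emph{stationary} linear elliptic problem
\begin{equation*}
-\divg(\kappa\nabla\phi)=g\quad\text{in }\Omega,
\end{equation*}
with datum $g(\cdot,t)$ and the boundary condition \eqref{condetatphi} (Dirichlet trace $\phi_1$ if $\tilde\chi=0$, homogeneous Neumann if $\tilde\chi=1$). Since the operator $-\divg(\kappa\nabla\cdot)$ is independent of $t$ and $g\in L^\infty(\Omega_T)$, every spatial estimate below will carry a constant independent of $t$, with $\operatorname{ess\,sup}_{t}\norm{g(t)}_{L^q(\Omega)}\leq\norm{g}_{L^\infty(\Omega_T)}\abs{\Omega}^{1/q}$; strong measurability of $t\mapsto\phi(t)$ will follow from continuity of the ($t$-independent) solution operator, so that the pointwise-in-$t$ bounds upgrade to the stated $L^\infty(0,T;\cdot)$ conclusions.

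First I would settle existence and uniqueness in $L^\infty(0,T;H^1(\Omega))$ by the Lax--Milgram lemma. For $\tilde\chi=0$, subtracting a fixed lift of $\phi_1$ one looks for $\phi-\phi_1\in H^1_0(\Omega)$ with $\int_\Omega\kappa\nabla\phi\cdot\nabla w\dx=\int_\Omega g\,w\dx$ for all $w\in H^1_0(\Omega)$; coercivity and boundedness of the form are immediate from $\kappa\xi\cdot\xi\geq\kappa_-\abs{\xi}^2$ and $\abs{\kappa\xi}\leq\kappa_+\abs{\xi}$, giving $\norm{\phi(t)}_{H^1}\leq C(\norm{g(t)}_{L^2}+\norm{\phi_1}_{H^1})$ uniformly in $t$. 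For $\tilde\chi=1$ the same argument runs on $H^1(\Omega)/\R$ under the compatibility condition $\int_\Omega g\dx=0$, the solution being unique up to an additive constant.

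The two regularity claims are then elliptic bootstrap arguments. For Part 1 ($p>N$, $\kappa\in(\C^1(\bar\Omega))^{N\times N}$) I would set $\zeta:=\phi-\phi_1\in H^1_0(\Omega)$, so that $-\divg(\kappa\nabla\zeta)=g+\divg(\kappa\nabla\phi_1)=:\tilde g\in L^p(\Omega)$ (here $\kappa\in\C^1$ and $\phi_1\in W^{2,p}$ make $\divg(\kappa\nabla\phi_1)\in L^p$, and $g\in L^\infty\subset L^p$). Writing the operator in nondivergence form and invoking the Calder\'on--Zygmund / Agmon--Douglis--Nirenberg $L^p$ estimate on the $\C^2$ domain (Gilbarg--Trudinger, Thm.~9.15) yields $\zeta\in W^{2,p}(\Omega)$, hence $\phi\in L^\infty(0,T;W^{2,p}(\Omega))$; Morrey's embedding $W^{2,p}\hookrightarrow\C^{1,1-N/p}(\bar\Omega)$ then gives $\nabla\phi\in L^\infty$ and so $v=-\kappa\nabla\phi\in L^\infty(\Omega_T)$. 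For Part 2, the $L^\infty$ bound on $\phi$ does \emph{not} require smooth $\kappa$: since $\phi_1\in W^{2,p}\hookrightarrow L^\infty$ ($p>N/2$) the Dirichlet trace is bounded, and testing $-\divg(\kappa\nabla\phi)=g$ with truncations $(\phi-k)^+$ for $k\geq\norm{\phi_1}_{L^\infty}$ (De Giorgi--Stampacchia, using $g\in L^\infty\subset L^q$ with $q>N/2$) produces $\norm{\phi(t)}_{L^\infty}\leq\norm{\phi_1}_{L^\infty}+C\norm{g(t)}_{L^q}$, i.e. $\phi\in L^\infty(\Omega_T)$. If moreover $\kappa=\kappa^*\Id$ with $\kappa^*\in\C^1$, the same reduction gives $\tilde g\in L^p(\Omega)$ and hence $\zeta\in W^{2,p}(\Omega)$; since $p>N/2$, Sobolev's embedding $W^{2,p}\hookrightarrow W^{1,p^*}$ with $p^*=Np/(N-p)>N$ yields $\nabla\phi=\nabla\zeta+\nabla\phi_1\in L^{p^*}$, i.e. $\phi\in L^\infty(0,T;W^{1,q}(\Omega))$ with $q=p^*>N$.

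The routine parts are Lax--Milgram and the embeddings; the steps needing care are the \emph{global}, up-to-the-boundary $L^p$ and truncation estimates, where $\partial\Omega\in\C^2$ and the continuity of the leading coefficient are essential, together with the uniformity in $t$ and the measurability of $t\mapsto\phi(t)$, which I would establish once from the $t$-independent solution operator. I expect the genuine obstacle to be the last assertion $v\in L^\infty(\Omega_T)$ in Part 2: there the gradient is only controlled in $L^{q}$ with $q>N$ (the low regularity $\phi_1\in W^{2,p}$, with $p$ possibly below $N$, caps the second-order estimate at $W^{2,p}$), so reaching $L^\infty$ would require an additional interior bootstrap of $\nabla\phi$ from $-\kappa^*\Delta\phi=g+\nabla\kappa^*\cdot\nabla\phi\in L^{q}$ (giving $\nabla\phi\in L^\infty_{\mathrm{loc}}$) plus extra boundary control --- or else a weakening of the conclusion to $v\in L^{q}(\Omega_T)$, $q>N$.
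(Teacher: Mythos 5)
The paper offers no argument here: its entire ``proof'' is a citation to Proposition 3.1 of Augeraud-V\'eron et al.\ \cite{aug2017}. Your proposal is therefore not a variant of the paper's route but a self-contained reconstruction of the argument that the citation conceals, and as such it is essentially correct and uses the expected tools: Lax--Milgram at each fixed $t$ (the operator being $t$-independent and $g\in L^{\infty}(\Omega_T)$, uniformity in $t$ and measurability of $t\mapsto\phi(t)$ come from the fixed bounded solution operator), nondivergence-form Calder\'on--Zygmund/ADN estimates on the $\C^2$ domain plus Morrey's embedding for Part~1, Stampacchia truncation for the $L^{\infty}$ bound of Part~2 (correctly requiring no smoothness of $\kappa$), and the embedding $W^{2,p}\hookrightarrow W^{1,Np/(N-p)}$ with $Np/(N-p)>N$ precisely when $p>N/2$ for the $W^{1,q}$ claim. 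Three small caveats: in the pure Neumann case $\tilde{\chi}=1$ the asserted uniqueness holds only up to additive constants and needs the compatibility condition $\int_{\Omega}g\dx=0$ (you note this; the statement does not); for $N/2<p<2$ the identification of the $W^{2,p}$ solution with the $H^1$ Lax--Milgram solution requires a short duality argument you pass over; and $\phi_1$ is never defined in this paper --- the boundary condition \eqref{condetatphi} as written is homogeneous --- so your reading of $\phi_1$ as a Dirichlet trace, imported from the cited work, is the only sensible one.

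Your closing observation is the most valuable part of the proposal: the last sentence of Part~2 does not follow from what precedes it, and is in fact false in general. From $\phi\in L^{\infty}(0,T;W^{1,q}(\Omega))$, $q>N$, one controls $\phi$ in $C^{0,1-N/q}(\bar{\Omega})$ but $\nabla\phi$ only in $L^{q}$, so the honest conclusion is $v\in L^{\infty}(0,T;(L^{q}(\Omega))^N)$ rather than $v\in L^{\infty}(\Omega_T)$; and since the boundary datum $\phi_1\in W^{2,p}$ with $p$ possibly below $N$ caps the global second-order regularity at $W^{2,p}$, no bootstrap can repair this up to the boundary (a harmonic function whose boundary trace lies in $W^{2-1/p,p}(\partial\Omega)$ but not in $C^{1}$ already has unbounded gradient near $\partial\Omega$). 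This is a defect of the statement as transcribed, not of your argument, and it propagates: Proposition \ref{prop:convBernardi}(ii) and the convergence section invoke $v\in(L^{\infty}(\Omega_T))^N$ under the scalar-$\kappa$ hypothesis, i.e.\ exactly through this flawed claim, so only the hypotheses of Part~1 actually deliver what those later results use.
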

\begin{proof}
We refer the reader to the proof of Proposition 3.1 in Augeraud-V\'eron et al. \cite{aug2017}.
\end{proof}

\section{Mixed Finite Element Method}
The variational formulation of the problem \eqref{eqetat}-\eqref{eqetatphi} with \eqref{condetat}-\eqref{condetatphi} is the following: finding $(c,\mu,\phi)$, with $c \in L^2(0,T;H^1(\Omega))$, $\phi \in L^{\infty}(0,T;W^{2,p}(\Omega))$ with $p>N$ or $\phi \in L^{\infty}(0,T;W^{1,q}(\Omega))$ with $q>N$ (and then $v \in L^{\infty}(\Omega)$), such that 
\begin{align*}
\int_{\Omega} R \psi \partial_t c  \varphi \dx - \int_{\Omega} (\kappa \nabla \phi \cdot \nabla c) \varphi \dx &+ \int_{\Omega} \psi S(v)  \nabla c \cdot \nabla \varphi \dx =&&\\
& \int_{\Omega} \Bigl(-r(c) -gc + p\chi_{\mathcal{S}} \Bigr) \varphi \dx, &&\\
&\int_{\Omega} \kappa \nabla \phi \cdot \nabla \tilde{\varphi }\dx = \int_{\Omega}  g \tilde{\varphi} \dx,&&
\end{align*}
for all test functions $\varphi$ and $\tilde{\varphi}$ such that
\begin{eqnarray}
\label{testphi}
\varphi \in L^2(0,T;H^1(\Omega)) \cap H^1(0,T;L^2(\Omega)) \text{ with } (1-\chi)\varphi \in L^2(0,T;H^1_0(\Omega)), 
\end{eqnarray}
\begin{eqnarray}
\label{testphitilde}
\tilde{\varphi} \in 
L^2(0,T;H^1(\Omega)) \text{ with } (1- \tilde{\chi})\tilde{\varphi} \in L^2(0,T;H^1_0(\Omega)).
\end{eqnarray}
We denoted
$$W=\{ f \in H^1(\Omega), (1-\chi)f \in H_0^1(\Omega) \}$$ 
$$\tilde{W}= \{ f \in H^1(\Omega), (1- \tilde{\chi})f \in H^1_0(\Omega) \}.$$ 
The model will be discretized in time and in space. Lets start by the spacial discretization.  We define $\mathcal{T}_h$ a regular decomposition of $\Omega \subset \R^N$ into $N$ closed simplices where $h$ stands for the mesh diameter. \\
We assume that $\bar{\Omega} = \cup_{\mathcal{O} \in \mathcal{T}_h}\mathcal{O}$. We define the following discrete subspaces (finit elements)
$W_h \subset W$, $\tilde{W}_h \subset \tilde{W}$, by
 \begin{eqnarray}
 \label{Wh}
 W_h := 
\{ q \in W, q \text{ is constant on each mesh $\mathcal{O} \in \mathcal{T}_h$}\} 
\end{eqnarray}
 \begin{multline}
 \label{Whtilde}
 \tilde{W}_h := \{ q \in \tilde{W}, q \text{ is at most first degree polynomial on each element of $\mathcal{O} \in \mathcal{T}_h$}\}. 
\end{multline}
For the vectorial functions, we define: 
\begin{multline*}
U_{h} := \{ q \in H(\divg,\Omega), \forall i \in \{ 1,...,N \}, \text{ the i-th componant of $q$ is a polynomial of }\\ \text{ degree $\leq 1$ on each element of $\mathcal{O} \in \mathcal{T}_h$} \}
\end{multline*}
and
\begin{multline*}
U_{h,0} := \{ q \in H(\divg,\Omega) \text{ with $q \cdot n =0$ sur $\partial \Omega$ } \forall i \in \{ 1,...,N \},   \text{ the i-th composant of $q$ }\\ \text{ is a polynomial of degree $\leq 1$ on each element of $\mathcal{O} \in \mathcal{T}_h$} \}.
\end{multline*} 
For the time discretization, we denoted by $\tau$ the time step. A semi-implicit Euler scheme will be use. 

\subsection{Discrete hydraulic head problem analysis} \label{subsec:hydraulic}
The problem is the following: \\
\\
 \noindent\fbox{\parbox{\linewidth \fboxrule \fboxsep}{
Find $\phi_h \in \tilde{W}_h$ and $v_h \in U_h$ (or $U_{h,0}$ according to the boundary conditions) such that
\begin{eqnarray}
\label{discretphi}
\begin{cases} \displaystyle{
 \int_{0}^{T} \int_{\Omega} v_h \cdot \psi_h \dx \dt + \int_{0}^{T} \int_{\Omega} \kappa \nabla \phi_h \cdot  \psi_h \dx \dt = 0, }\\
\displaystyle{ \int_{0}^{T} \int_{\Omega} v_h \cdot \nabla \varphi_h \dx \dt + \int_{0}^{T} \int_{\Omega} g \varphi_h \dx \dt = 0, }
 \end{cases}
 \end{eqnarray}
for all $\psi_h \in U_h$ (or $U_{h,0}$) and $\varphi_h \in W_h$.}} \\
\\
The analysis of this problem is classical both for Neumann or Dirichlet boundary conditions (see the first schema of Achdou et al. \cite{Bernardi} or Girault et al. \cite{Girault}). The variational formulation \eqref{discretphi} allowing to compute $\phi_h$ does not need a space discretisation. The benefit is that this scheme can easily be used for highly contrasted media (Achdou and Bernardi \cite{Achdou}). Moreover, we get the following optimal convergence result: 

\begin{prop}
\label{prop:convBernardi}
(i) If the solution of the problem \eqref{eqetatphi}-\eqref{condetatphi} is such that $(v,\phi) \in (H^s(\Omega))^N \times H^{s+1}(\Omega)$, for a $s \in \R$ such that $0 < s \leq 1$, then we get the following optimal uniform estimation: 
$$ \norm{v - v_h}_{(L^2(\Omega))^N} + \norm{\phi - \phi_h}_{H^1(\Omega)} \leq C h^s \Bigl( \norm{v}_{(H^s(\Omega))^N} + \norm{\phi}_{H^{s+1}(\Omega)} \Bigr).$$
(ii) In particular, if one of the following assumptions is satisfied, 
\begin{itemize}
\item $\partial \Omega$ is $\C^2$, $\kappa \in (\C^1(\Omega))^{N\times N}$, 
\item $\partial \Omega$ is $\C^2$, $\kappa = \kappa^* Id$ with $\kappa^* : \bar{\omega} \rightarrow \R$ and $\kappa \in \C^1(\Omega)$, 
\end{itemize}
we know (according to Proposition \ref{prop:phi}) that $\phi \in L^{\infty}(0,T;W^{2,p}(\Omega))$ and $v\in (L^{\infty}(\Omega_T))^N$. We thus get the strong convergence of the scheme in $L^{\infty}(0,T;L^2(\Omega))$ in the sense where 
$$ \lim \limits_{h \rightarrow 0} \norm{v - v_h}_{(L^2(\Omega_T))^N} = 0. $$
Moreover, for all $h$, 
$$ \norm{v_h}_{(L^{\infty}(\Omega_T))^N} \leq C.$$ 
\end{prop}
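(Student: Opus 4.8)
The scheme \eqref{discretphi} is the Galerkin discretisation of a well-posed mixed (saddle-point) problem of Darcy type: writing $a(v,\psi)=\int_{\Omega} v\cdot\psi\dx$ and $b(\psi,\phi)=\int_{\Omega}\kappa\nabla\phi\cdot\psi\dx$ and testing against the pair $(\psi_h,\varphi_h)\in U_h\times W_h$, the first (respectively second) line weakly enforces $v_h=-\kappa\nabla\phi_h$ (respectively $\divg v_h=g$). The plan is to verify Brezzi's two structural conditions at the discrete level: coercivity of $a$ on the discrete kernel $\{\psi_h\in U_h:\ b(\psi_h,\cdot)=0\}$, which is immediate since $a$ is the full $L^2$ inner product and the lower bound on $\kappa$ controls $\nabla\phi_h$; and the discrete inf-sup (LBB) condition for $b$ between $U_h$ and $W_h$, which holds by the usual compatibility of the $H(\divg)$--$L^2$ pairing $(U_h,W_h)$. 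These two facts give well-posedness uniform in $h$ together with the abstract quasi-optimal bound
\[
\norm{v-v_h}_{(L^2(\Omega))^N}+\norm{\phi-\phi_h}_{H^1(\Omega)}
\le C\Bigl(\inf_{\psi_h\in U_h}\norm{v-\psi_h}_{(L^2(\Omega))^N}+\inf_{\varphi_h\in \tilde W_h}\norm{\phi-\varphi_h}_{H^1(\Omega)}\Bigr).
\]
I would then insert the standard interpolation estimates for $U_h$ and for the $P_1$ space $\tilde W_h$, namely $\norm{v-\Pi_h v}_{(L^2(\Omega))^N}\le Ch^s\norm{v}_{(H^s(\Omega))^N}$ and $\norm{\phi-\pi_h\phi}_{H^1(\Omega)}\le Ch^s\norm{\phi}_{H^{s+1}(\Omega)}$ for $0<s\le 1$, to recover the announced rate. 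This is precisely the analysis of the first scheme of Achdou et al. \cite{Bernardi,Achdou} and Girault et al. \cite{Girault}, to which I would refer for the routine details.

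\textbf{Part (ii), convergence.} Under either bullet hypothesis, Proposition \ref{prop:phi} gives $\phi\in L^{\infty}(0,T;W^{2,p}(\Omega))$ with $p>N$, hence $v=-\kappa\nabla\phi\in (L^{\infty}(\Omega_T))^N$. Since $p>N\ge 2$ and $\Omega$ is bounded, $W^{2,p}(\Omega)\hookrightarrow H^2(\Omega)$, so at a.e. time $t$ the pair $(v(t),\phi(t))$ enjoys the regularity of part (i) with $s=1$, and $\norm{v(t)}_{H^1(\Omega)}+\norm{\phi(t)}_{H^2(\Omega)}$ is bounded uniformly in $t$. Applying the estimate of (i) at a.e. $t$ then yields $\norm{v(t)-v_h(t)}_{(L^2(\Omega))^N}\le Ch$, and integrating over $(0,T)$ gives $\norm{v-v_h}_{(L^2(\Omega_T))^N}\le Ch\to 0$, which is the claimed strong convergence.

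\textbf{Part (ii), uniform $L^{\infty}$ bound (main obstacle).} This is the delicate point, since $L^2$ convergence alone does not control $\norm{v_h}_{L^{\infty}}$. I would split $v_h=\Pi_h v+(v_h-\Pi_h v)$ with $\Pi_h$ the $H(\divg)$-interpolant. The interpolant is stable in $L^{\infty}$ on $W^{1,p}$, $p>N$ (by $W^{1,p}\hookrightarrow C^0$ and the local definition of $\Pi_h$), so $\norm{\Pi_h v}_{L^{\infty}}\le C\norm{v}_{W^{1,p}}\le C$ uniformly in $h$ and $t$. For the discrete remainder $e_h:=v_h-\Pi_h v\in U_h$, an inverse inequality gives $\norm{e_h}_{L^{\infty}}\le Ch^{-N/2}\norm{e_h}_{(L^2(\Omega))^N}$, and the triangle inequality together with part (i) gives $\norm{e_h}_{(L^2(\Omega))^N}\le\norm{v-v_h}_{(L^2(\Omega))^N}+\norm{v-\Pi_h v}_{(L^2(\Omega))^N}\le Ch$, whence $\norm{e_h}_{L^{\infty}}\le Ch^{1-N/2}$. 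For $N=2$ this is already bounded and the claim follows after taking the supremum in $t$. For $N=3$ the crude inverse estimate is insufficient (it blows up like $h^{-1/2}$), and I expect the genuine difficulty to lie exactly here: the clean route is to bypass the inverse inequality and invoke a maximum-norm stability estimate for the mixed Raviart--Thomas projection, of the form $\norm{v_h}_{L^{\infty}}\le C\norm{v}_{W^{1,p}}$, established by the weighted-norm / duality technique, which delivers the uniform bound directly from $v\in L^{\infty}(0,T;W^{1,p}(\Omega))$. Taking the supremum in $t$ then gives $\norm{v_h}_{(L^{\infty}(\Omega_T))^N}\le C$ for all $h$.
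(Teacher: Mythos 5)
Your treatment of part (i) and of the $L^2(\Omega_T)$ convergence in (ii) matches the paper's in substance: the paper simply cites Theorem 8 of Achdou, Bernardi and Coquel \cite{Bernardi} for (i), and for the convergence it applies (i) under the regularity supplied by Proposition \ref{prop:phi}, exactly as you do (the paper takes $p=2$ where you pass through $W^{2,p}\hookrightarrow H^2$; same thing). Where you genuinely diverge is the uniform $L^{\infty}$ bound on $v_h$, and the comparison is instructive. The paper's argument is very short: from $\norm{v_h-v}_{(L^2(\Omega_T))^N}\to 0$ it deduces convergence ``almost everywhere'', then asserts that for every $\epsilon>0$ there is $H>0$ with $\abs{v_{h,i}-v_{,i}}<\epsilon$ a.e. in $\Omega_T$ for all $h<H$, and concludes by the triangle inequality. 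This is logically fragile: $L^2$ convergence yields a.e. convergence only along a subsequence, and a.e. convergence does not give an $\epsilon$-bound uniform in $x$ --- that uniform smallness is essentially the $L^{\infty}$ convergence one is trying to prove. Your route --- split $v_h=\Pi_h v+(v_h-\Pi_h v)$, use $L^{\infty}$-stability of the Raviart--Thomas interpolant on $W^{1,p}$, $p>N$, and an inverse inequality on the discrete remainder --- is the standard rigorous way to obtain such bounds, and you correctly identify its limits: it requires a quasi-uniform mesh (to pass from the local inverse estimate to a global one), it closes the argument only for $N=2$ (where $h^{-N/2}\cdot h$ stays bounded), and for $N=3$ the inverse inequality must be replaced by a genuine maximum-norm stability estimate for the mixed projection (of Gastaldi--Nochetto type), a nontrivial result you would have to import from the literature. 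So your proof is heavier but sound (modulo that citation and the quasi-uniformity hypothesis), whereas the paper's is one line but does not actually establish the uniform bound; your version is the one a careful reader should prefer, or the paper's argument should be repaired along your lines.
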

\begin{proof}
Point (i) is theorem 8 of Achdou et al. \cite{Bernardi}. Under the assumptions of (ii), by choosing $p=2$, point (i) applies and gives $\lim \limits_{h \rightarrow 0} \norm{v_h -v}_{(L^2(\Omega_T))^N} = 0$. The same convergence is then also true almost everywhere in $\Omega_T$. Thus, $\forall \epsilon > 0$, there exists $H > 0$ such that if $h<H$, 
$$ \abs{v_{h,i} - v_{,i}} < \epsilon \text{, $1\leq i \leq N$, almost everywhere in $\Omega_T$}$$
where we denoted $u_{,i}$ the i-th composant of a vector $u \in \R^N$. We deduce that for $h<H$, we get 
$$ \abs{v_{h,i}} \leq \abs{v_{h,i} - v_{,i}} + \abs{v_{,i}} \leq  \epsilon + \norm{v_{,i}}_{L^{\infty}(\Omega_T)}.$$
Thus the uniform bound of $v_h$ in $L^{\infty}(\Omega_T)$. 
\end{proof}

\section{Discrete concentration problem}
We recall the problem
\begin{eqnarray}
\label{eqetatbis}
R \psi \partial_t c + \divg (v_c) = -r(c)  + p\chi_{S} \text{ in $\Omega_T$,} 
\end{eqnarray}
with
\begin{eqnarray}
\label{vc}
v_c=-S(v) \psi \nabla c + v c,
\end{eqnarray}
completed by the initial and boundary conditions: 
\begin{eqnarray}
\label{condetatbis}
\chi S(v) \nabla c \cdot n +(1-\chi)c = 0 \text{ on $\partial \Omega \times (0,T)$, $c{|_{t=0}}=c_0$ in $\Omega$.} 
\end{eqnarray}
\begin{remark}
\label{rem:pasdephn}
During the computing of the concentration, the quantity of spread fertilizer is known for all $x$ and for all $t$, $p$ is not a unknown of the problem, that is why it is not discretized. 
\end{remark}
A natural (and classical) scheme would be 
\begin{eqnarray*}
\langle R \psi (c_h^n - c_h^{n-1}), w  \rangle 
+ \tau \langle  \divg({v_c}_h^n),  w   \rangle 
+ \tau \langle r(c_h^{n-1}), w \rangle 
- \tau \langle p , w  \rangle = 0,
\end{eqnarray*}
\begin{eqnarray*}
\langle S(v_h)^{-1} \psi^{-1} {v_c}_h^n , u  \rangle - \langle  c_h^n , \divg(u)  \rangle - \langle S(v_h)^{-1} \psi^{-1} v_h c_h^n , u  \rangle = 0.
\end{eqnarray*}
However, we will adopted the following mixed formulation: 
\begin{eqnarray*}
\langle R \psi (c_h^n - c_h^{n-1}), w  \rangle 
+ \tau \langle  \divg(S(v_h)^{\frac{1}{2}}{v_c}_h^n),  w   \rangle 
+ \tau \langle r(c_h^{n-1}), w \rangle 
- \tau \langle p , w  \rangle = 0,
\end{eqnarray*}
\begin{eqnarray*}
\langle S(v_h)^{-\frac{1}{2}} \psi^{-1} {v_c}_h^n , u  \rangle - \langle  c_h^n , \divg(u)  \rangle - \langle S(v_h)^{-1} \psi^{-1} v_h c_h^n , u  \rangle = 0,
\end{eqnarray*}
for all $w \in W_h$ defined by \eqref{Wh} and for all $u\in U_h$ in the case where $\chi =0$, or for all $u \in U_{h,0}$ else. Indeed, this formulation ensures the stability for $L^2$ velocity (see below). \\
We have denoted by $S(v)^{\frac{1}{2}}$ the unique symmetric matrix square of the symmetric defined positive matrix $S(v_h)$ and we have set $S(v)^{-\frac{1}{2}}=\Bigl(S(v_h)^{\frac{1}{2}}\Bigr)^{-1}$. Thus, we are in a comfortable framework where matrix $S(v)$, $S(v)^{-1}$ and $S(v)^{-\frac{1}{2}}$ are commuting. \\
Thus, the variational problem is the following: \\
\\
\noindent\fbox{\parbox{\linewidth \fboxrule \fboxsep}{
Let $n \in \{ 1,...,N \}$ and $(c_h^{n-1},v_h)$ given in $(W_h,U_h)$ and $p$ given in $L^2(\Omega)$. The problem is to find $c_h^n \in W_h$ such that: 
\begin{multline}
\label{discretc}
\langle R \psi (c_h^n - c_h^{n-1}), w_{h}  \rangle 
+ \tau \langle  \divg(S(v_h)^{\frac{1}{2}}{v_c}_h^n),  w_{h}    \rangle 
+ \tau \langle r(c_h^{n-1}),w_h  \rangle 
- \tau \langle p  ,w_h   \rangle = 0
\end{multline}
\begin{eqnarray}
\label{discretvc}
\langle S(v_h)^{-\frac{1}{2}} \psi^{-1} {v_c}_h^n , u_{h}   \rangle - \langle  c_h^n , \divg(u_h)   \rangle - \langle S(v_h)^{-1} \psi^{-1} v_h c_h^n , u_{h}   \rangle = 0\end{eqnarray}
for all $(w_{h}, u_h) \in (W_h, U_{h})$ such that $\chi u_h \in U_{h,0}$.  
The initialization $c_h^0$ is given in $W_h$. }}\\
\\
\begin{thm}
\label{thm:existeuniquediscretetat}
We suppose that all the assumptions in section  \ref{subsec:assumptions} are satisfied. Then: \\
(i) If $\tau$ is enough small and if $v_h$ is bounded in $L^2(\Omega)$ independently of $h$, problem \eqref{discretc}-\eqref{discretvc} admits a unique solution. \\
(ii) If moreover one of the two following assumptions is satisfied, 
\begin{enumerate}
\item the mesh is assumed quasi-uniform when the diffusion tensor $S(.)$ depends of the velocity, and there exists $\epsilon >0$ such that the following Courant-Friedrich-Lewy condition is satisfied
\begin{eqnarray}
\label{numCFL}
\frac{\tau^{1-\epsilon}}{h^N} := C_{CFL} =  \mathcal{O}(1) \text{ ;} 
\end{eqnarray}
\item  the velocity $v_h$ is bounded in $L^{\infty}(\Omega_T)$ independament of $h$ ; 
\end{enumerate}
then we get the following estimation: 
\begin{eqnarray}
\label{CFLc}
 \norm{c_h^n}_{L^2(\Omega_T)}^2 + \tau \norm{{v_c}_h^n}_{L^2(\Omega_T)}^2 \leq  C.
 \end{eqnarray}
\end{thm}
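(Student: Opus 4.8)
The plan is to read part (i) as a well-posedness statement for a square linear system in finite dimension, and part (ii) as an energy (a priori) estimate obtained by a judicious choice of test functions, closed by a discrete Gronwall argument. Throughout, $\norm{\cdot}$ denotes the $L^2(\Omega)$ norm and $\langle\cdot,\cdot\rangle$ the $L^2(\Omega)$ pairing.

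\textbf{Part (i).} For fixed $h$ and $n$ the quantities $c_h^{n-1}$, $r(c_h^{n-1})$, $p$ and $v_h$ are data, so \eqref{discretc}--\eqref{discretvc} is a square linear system for $(c_h^n,{v_c}_h^n)\in W_h\times U_h$ (the equations indexed by $w_h\in W_h$ and by admissible $u_h$ match the unknowns in number). In finite dimension injectivity implies bijectivity, so it suffices to show that if $(c,{v_c})$ solves the system with zero data it must vanish. I would test the first equation with $w_h=c$ and the second with $u_h=S(v_h)^{\frac{1}{2}}{v_c}$. Since $S(v_h)^{\frac{1}{2}}$ and $S(v_h)^{-\frac{1}{2}}$ commute, the leading term of the second equation collapses to $\langle\psi^{-1}{v_c},{v_c}\rangle$, and substituting its expression for $\langle c,\divg(S(v_h)^{\frac{1}{2}}{v_c})\rangle$ into the first equation eliminates the divergence contribution, leaving the identity $\langle R\psi c,c\rangle+\tau\langle\psi^{-1}{v_c},{v_c}\rangle=\tau\langle S(v_h)^{-1}\psi^{-1}v_h c,\,S(v_h)^{\frac{1}{2}}{v_c}\rangle$. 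The left-hand side is coercive, $\langle R\psi c,c\rangle\geq R\psi_-\norm{c}^2$ and $\langle\psi^{-1}{v_c},{v_c}\rangle\geq\psi_+^{-1}\norm{{v_c}}^2$, while the right-hand side is controlled pointwise by \eqref{eq5.7.2}; a Young inequality absorbs the $\norm{{v_c}}^2$ part and leaves $\tau\int_\Omega(1+\abs{v_h})\abs{c}^2\dx$, bounded on the fixed mesh through an inverse inequality by $C(h)\tau\norm{c}^2$. For $\tau$ small enough (for that $h$) this forces $c=0$ and ${v_c}=0$, giving bijectivity.

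\textbf{Part (ii).} For the a priori bound I would repeat the same test-function choice, now retaining the data. The discrete time derivative is treated by $\langle R\psi(c_h^n-c_h^{n-1}),c_h^n\rangle=\tfrac{R}{2}\bigl(\norm{\sqrt{\psi}\,c_h^n}^2-\norm{\sqrt{\psi}\,c_h^{n-1}}^2+\norm{\sqrt{\psi}\,(c_h^n-c_h^{n-1})}^2\bigr)$, the last term being discarded. After the same elimination of the divergence terms one reaches an inequality in which $\tfrac{R}{2}\bigl(\norm{\sqrt{\psi}\,c_h^n}^2-\norm{\sqrt{\psi}\,c_h^{n-1}}^2\bigr)+\tau\psi_+^{-1}\norm{{v_c}_h^n}^2$ is bounded by the dispersion cross term plus $\tau\langle r(c_h^{n-1}),c_h^n\rangle$ and $\tau\langle p,c_h^n\rangle$. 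The reaction term is estimated by $\abs{r(x)}\leq r_+\abs{x}$ and the source by Cauchy--Schwarz, both yielding harmless $\mathcal{O}(\tau)$ contributions in $\norm{c_h^{n-1}}^2$, $\norm{c_h^n}^2$ and $\norm{p}^2$. The dispersion cross term is bounded by \eqref{eq5.7.2} and Young, the $\norm{{v_c}_h^n}^2$ part absorbed on the left, leaving the decisive quantity $\tau\int_\Omega\abs{v_h}\abs{c_h^n}^2\dx$.

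\textbf{The main obstacle} is precisely this convection--dispersion term $\tau\int_\Omega\abs{v_h}\abs{c_h^n}^2\dx$, the only place where the velocity enters superlinearly. Under assumption~2, $v_h$ uniformly bounded in $L^\infty(\Omega_T)$, it is immediately dominated by $C\tau\norm{c_h^n}^2$ with $C$ independent of $h$, and a discrete Gronwall inequality yields \eqref{CFLc} directly. Under assumption~1 only $v_h\in L^2$ is available, and I would invoke quasi-uniformity through the inverse inequality $\norm{c_h^n}_{L^4(\Omega)}^2\leq C h^{-N/2}\norm{c_h^n}_{L^2(\Omega)}^2$ (equivalently $\norm{v_h}_{L^\infty}\leq Ch^{-N/2}\norm{v_h}_{L^2}$), so that $\tau\int_\Omega\abs{v_h}\abs{c_h^n}^2\dx\leq C\tau h^{-N/2}\norm{c_h^n}^2$; the Courant--Friedrich--Lewy condition \eqref{numCFL}, which renders $\tau h^{-N/2}=\mathcal{O}(\tau^{(1+\epsilon)/2})$ subordinate, is exactly the ingredient that makes the convection term negligible at the level of a single step so that the discrete Gronwall argument can be driven. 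I expect the careful bookkeeping of this Gronwall step under \eqref{numCFL}, and the verification that the inverse-inequality constant is compatible with the uniform bound \eqref{CFLc}, to be the technically delicate part of the proof.
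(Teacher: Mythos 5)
Your part (i) is correct, but it takes a genuinely different and more elementary route than the paper. Because the scheme is semi-implicit ($r$ is frozen at $c_h^{n-1}$ and the tensor $S$ at the given $v_h$), the system \eqref{discretc}--\eqref{discretvc} is indeed a square linear system (granting, as the paper implicitly does in its choice of basis, that the test space for $u_h$ has the same dimension as the space containing ${v_c}_h^n$), so reducing existence to injectivity is legitimate. Your injectivity computation --- test with $w_h=c$ and $u_h=\tau S(v_h)^{\frac{1}{2}}{v_c}$, eliminate the divergence terms, apply \eqref{eq5.7.2} and Young, take $\tau$ small with a threshold depending on $h$ through $\norm{v_h}_{L^{\infty}(\Omega)}$ --- is in substance identical to the paper's uniqueness step. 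The paper instead obtains existence by building the map $\mathcal{P}(\xi)=\hat{\xi}$ on $\R^{N_1+N_2}$, proving $((\mathcal{P}(\xi),\xi))\geq m\vert\vert\vert\xi\vert\vert\vert^2-K$ and invoking the Brouwer-type Lemma 1.4, page 164 of Temam \cite{Temam}; that machinery is unnecessary for a linear system, though it is what one would need for a fully implicit (genuinely nonlinear) treatment of $r$.

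For part (ii) in the $L^{\infty}$ case (assumption (b)) your argument is correct and is in fact more careful than the paper's. The paper does not telescope the time term: it bounds $\abs{\langle R\psi c_h^{n-1},\bar{w}\rangle}$ by Young with the constant $\frac{2R\psi_+^2}{\psi_-}$ and then evaluates its coercivity inequality at the solution, which yields a one-step recursion whose factor is of order $\psi_+^2/\psi_-^2>1$; the iteration over $T/\tau$ steps needed to reach \eqref{CFLc} is never carried out, and as written it would not give a $\tau$-uniform constant. Your weighted identity $\langle R\psi(c_h^n-c_h^{n-1}),c_h^n\rangle=\frac{R}{2}\bigl(\norm{\sqrt{\psi}\,c_h^n}^2-\norm{\sqrt{\psi}\,c_h^{n-1}}^2+\norm{\sqrt{\psi}\,(c_h^n-c_h^{n-1})}^2\bigr)$ keeps the recursion factor at $1+\mathcal{O}(\tau)$, so the discrete Gronwall lemma closes; this is precisely the device the paper itself uses later, in the proof of Theorem \ref{thm:stabilite}.

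The genuine gap is in the CFL case (assumption (a)). Your leftover per-step term is $C\tau h^{-N/2}\norm{c_h^n}^2$, which under \eqref{numCFL} equals $C\,C_{CFL}^{1/2}\tau^{(1+\epsilon)/2}\norm{c_h^n}^2$. This is $o(1)$ as $\tau\to0$, but it is \emph{not} $\mathcal{O}(\tau)$, and Gronwall needs summability of the per-step coefficients: the lemma produces a factor $\exp\bigl(C\sum_{n\leq T/\tau}\tau^{(1+\epsilon)/2}\bigr)=\exp\bigl(C\,T\,C_{CFL}^{1/2}\tau^{(\epsilon-1)/2}\bigr)$, which blows up as $\tau\to0$ for every $\epsilon<1$ --- and $\epsilon\in(0,1)$ is the only range in which \eqref{numCFL} allows $h$ and $\tau$ to tend to zero together. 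Rebalancing the Young inequality cannot repair this: keeping the $\norm{{v_c}_h^n}^2$ contribution below the absorbable threshold $\tau\psi_+^{-1}$ forces a price on $\norm{c_h^n}^2$ of order at least $\tau h^{-N/2}$, and summability of that price over $T/\tau$ steps would require $Th^{-N/2}\leq C$, impossible as $h\to0$. The structural point is that your two ingredients are in tension: telescoping (indispensable for the $c_h^{n-1}$ term) consumes the zeroth-order coercivity $R\psi_-\norm{c_h^n}^2$, and it is exactly that term which the paper uses to absorb the $\norm{c_h^n}$-part of the dispersion cross term with an $h$- and $\tau$-uniform constant (paying for it with the bad recursion described above). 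So under the CFL as stated your proof does not close; the sentence in which you defer the "careful bookkeeping of this Gronwall step" is the precise point of failure, and fixing it would require either an $L^{\infty}$ bound on $v_h$ (i.e.\ falling back on assumption (b)) or a genuinely different estimate of the cross term, not finer bookkeeping.
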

\begin{remark}
\label{rem:existeuniquediscretetat}
Noticing that the discret velocity $v_h$ is given by the scheme described in subsection \ref{subsec:hydraulic}, the result (ii) is satisfied under assumptions of (ii) in Proposition \ref{prop:convBernardi} (because assumption (b) is then satisfied). The appeal is that the scheme is  well-defined  (perhaps stable: see below) even when the velocity is not controlled $L^{\infty}(\Omega_T)$. 
\end{remark}
\begin{proof}
\textbf{Existence of $c_h^n$}\\
Let
\begin{eqnarray}
\label{base}
\{ w_{1},...,w_{N_1} \} \cup \{ u_{1},...,  u_{N_2} \} \text{ an orthonormality of $W_h \times U_h$}.
\end{eqnarray}
Let $\hat{\alpha}=(\hat{\alpha}_1, ... , \hat{\alpha}_{N_1}) \in \R^{N_1}$, $\alpha=(\alpha_1, ... , \alpha_{N_1}) \in \R^{N_1}$ and $\hat{\beta}=(\hat{\beta}_1, ... , \hat{\beta}_{N_2}) \in \R^{N_2}$, $\beta=(\beta_1, ... , \beta_{N_2}) \in \R^{N_2}$. We denoted $\hat{\xi} = (\hat{\alpha},\hat{\beta}) \in \R^{N_1 + N_2}$ and $\xi = (\alpha,\beta) \in \R^{N_1 + N_2}$. We consider the inner product and the norm on $\R^{N_1 + N_2}$ defined by: 
\begin{eqnarray}
\label{xichapeau}
((\hat{\xi},\xi)):= (\hat{\alpha}, \alpha)_{N_1} + \tau (\hat{\beta},\beta)_{N_2} =  \sum \limits_{i=1}^{N_1} \hat{\alpha}_i \alpha_i + \tau \sum \limits_{i=1}^{N_2} \hat{\beta}_i \beta_i,
\end{eqnarray}
and
\begin{eqnarray}
\label{normexi}
\vert \vert \vert \xi \vert \vert \vert= ((\xi,\xi))^{\frac{1}{2}},
\end{eqnarray}
where $(.,.)_p$ is the euclidean inner product in $\R^p$. Moreover, for all $\xi = (\alpha, \beta) \in \R^{N_1 + N_2}$, there exists a unique $(\bar{w},\bar{u}) \in W_h \times U_h$ defined by 
\begin{eqnarray}
\label{wubar}
\bar{w}=\sum \limits_{k=1}^{N_1} \alpha_k w_k \text{ , } \bar{u}=\sum \limits_{k=1}^{N_2} \beta_k u_k.
\end{eqnarray}
Moreover we have
\begin{align}
\label{wubarxi}
\nonumber \norm{\bar{w}}^2_{L^2(\Omega)} + \tau \norm{\bar{u}}^2_{L^2(\Omega)}  &= \sum \limits_{k=1}^{N_1} \alpha_k^2 \norm{w_k}^2_{L^2(\Omega)} + \tau \sum \limits_{k=1}^{N_2} \beta_k^2 \norm{u_k}^2_{L^2(\Omega)}&& \\
\nonumber &= \sum \limits_{k=1}^{N_1} \alpha_k^2 + \tau \sum \limits_{k=1}^{N_2} \beta_k^2 \text{ car $\norm{w_k}^2_{L^2(\Omega)} = \norm{u_k}^2_{L^2(\Omega)}$ }&&\\
&=1 \text{ according to \eqref{base} } &&\\
&= (\alpha, \alpha)_{N_1} + \tau (\beta, \beta)_{N_2} = ((\xi,\xi)) = \vert \vert \vert \xi \vert \vert \vert^2.&&
\end{align}
For a given $\xi$ and for $(\bar{w}, \bar{u})$ defined by \eqref{wubar}, we use equations \eqref{discretc} and \eqref{discretvc} in order to define componants of $\hat{\xi}= (\hat{\alpha},\hat{\beta}) \in \R^{N_1 + N_2}$ as follows: 
\begin{eqnarray}
\label{alphachapeau}
\hat{\alpha}_k = \langle R \psi (\bar{w} - c_h^{n-1}), w_{k}  \rangle 
+ \tau \langle  \divg (S(v_h)^{\frac{1}{2}} \bar{u}),  w_{k}    \rangle 
+ \tau \langle r(c_h^{n-1}),w_{k}  \rangle 
- \tau \langle p, w_{k}   \rangle = 0
\end{eqnarray}
for all $k=1,...,N_1$ and
\begin{eqnarray}
\label{betachapeau}
\hat{\beta}_k = \langle S(v_h)^{-\frac{1}{2}} \psi^{-1} \bar{u} , u_{k}   \rangle - \langle  \bar{w} , \divg(u_{k})   \rangle - \langle S(v_h)^{-1} \psi^{-1} v_h \bar{w} , u_{k}   \rangle = 0
\end{eqnarray}
for all $k=1,...,N_2$. \\
Finally, we define the application $\mathcal{P} : \R^{N_1 + N_2} \rightarrow \R^{N_1 + N_2}$ by $$\mathcal{P}(\xi)=\hat{\xi}$$
which is continuous by construction. \\
The problem is then to solve $$ \mathcal{P}(\xi)=0.$$
We build $((\mathcal{P}(\xi),\xi))=((\hat{\xi},\xi))$ as defined in \eqref{xichapeau}. We compute the following euclidian inner product in $\R^{N_1}$: 
\begin{multline}
\label{psalpha}
(\hat{\alpha},\alpha)_{N_1} = \langle R \psi (\bar{w} - c_h^{n-1}), \bar{w } \rangle 
+ \tau \langle  \divg (S(v_h)^{\frac{1}{2}}\bar{u}),  \bar{w}   \rangle 
+ \tau \langle r(c_h^{n-1}), \bar{w} \rangle 
- \tau \langle p, \bar{w}   \rangle,
\end{multline}
and then the euclidian inner product in $\R^{N_2}$ : 
\begin{multline}
\label{psbeta}
\tau (\hat{\beta},\beta)_{N_2} = \tau \langle S(v_h)^{-\frac{1}{2}} \psi^{-1} \bar{u} ,S(v_h)^{\frac{1}{2}} \bar{u}   \rangle - \tau \langle  \bar{w} , \divg (S(v_h)^{\frac{1}{2}} \bar{u} )  \rangle \\ - \tau \langle S(v_h)^{-1} \psi^{-1} v_h \bar{w} , S(v_h)^{\frac{1}{2}} \bar{u}  \rangle.
 \end{multline}
 By adding \eqref{psalpha} and \eqref{psbeta}, we get: 
 \begin{multline}
 \label{pspxi}
 ((\mathcal{P}(\xi),\xi))=  \langle R \psi \bar{w} , \bar{w } \rangle  - \langle R \psi c_h^{n-1}, \bar{w } \rangle 
+ \tau \langle r(c_h^{n-1}), \bar{w} \rangle 
- \tau \langle p, \bar{w}   \rangle \\
+  \tau \langle S(v_h)^{-\frac{1}{2}} \psi^{-1} \bar{u} , S(v_h)^{\frac{1}{2}} \bar{u} \rangle - \tau \langle S(v_h)^{-1} \psi^{-1} v_h \bar{w} , S(v_h)^{\frac{1}{2}} \bar{u}  \rangle.
 \end{multline}
Now, we will prove that $(\mathcal{P}(\xi),\xi) > 0$. To reach this, we will estimate terms of \eqref{pspxi}, in particular through Cauchy-Schwarz and Young inequalities. We get: 
$$I_1=  \langle R \psi \bar{w} ,\bar{w}  \rangle \geq R \psi_-  \norm{\bar{w}}^2_{L^2(\Omega)},$$
$$\abs{I_2}=  \abs{\langle R \psi c_h^{n-1}, \bar{w}  \rangle } \leq  \frac{R \psi_-}{8} \norm{\bar{w}}^2_{L^2(\Omega)} + \frac{2R\psi_+^2}{\psi_-} \norm{c_h^{n-1}}^2_{L^2(\Omega)}, $$
$$\abs{I_3}= \abs{\tau \langle r(c_h^{n-1}), \bar{w} \rangle} \leq \frac{R \psi_-}{8} \norm{\bar{w}}^2_{L^2(\Omega)} + \frac{2\tau^2 r_+}{R\psi_-}  \norm{c_h^{n-1}}^2_{L^2(\Omega)} , $$ 
$$\abs{I_4}= \abs{ \tau \langle p_h^n, \bar{w} \rangle} \leq  \frac{R \psi_-}{8} \norm{\bar{w}}^2_{L^2(\Omega)} + \frac{2\tau^2}{R\psi_-} \norm{p}^2_{L^2(\Omega)}, $$
$$I_5= \tau \langle S(v_h)^{-\frac{1}{2}} \psi^{-1} \bar{u}, S(v_h^n)^{\frac{1}{2}}\bar{u}  \rangle \geq  \tau \psi^{-1}_+ \norm{\bar{u}}^2_{L^2(\Omega)}, $$
\begin{eqnarray*}
\abs{I_6} = \abs{ \tau \langle S(v_h)^{-1} \psi^{-1} v_h \bar{w}, S(v_h)^{\frac{1}{2}} \bar{u}  \rangle }. 
\end{eqnarray*}
Using equation \eqref{eq5.7.2}, we get
\begin{eqnarray*}
\abs{I_6} \leq \tau \psi^{-1} C_{disp} \Bigl(1+ \norm{v_h}_{L^{\infty}(\Omega)}^{\frac{1}{2}} \Bigr) \norm{\bar{w}}_{L^2(\Omega)} \norm{\bar{u}}_{L^2(\Omega)}.
\end{eqnarray*} 
With Cauchy-Schwarz and Young inequalities, we get
\begin{eqnarray*}
\abs{I_6} \leq \frac{R \psi_-}{8}  \norm{\bar{w}}_{L^{2}(\Omega)}^2 +  \frac{ 2 \tau^2  C_{disp}^2 \Bigl(1+ \norm{v_h}_{L^{\infty}(\Omega)}^{\frac{1}{2}} \Bigr)^2}{R \psi^3_-} \norm{\bar{u}}_{L^{2}(\Omega)}^2  .
\end{eqnarray*} 
Thus: 
\begin{eqnarray*}
((\mathcal{P}(\xi),\xi)) \geq \frac{R \psi_-}{2}  \norm{\bar{w}}^2_{L^2(\Omega)} + \tau \Bigl( \psi^{-1}_+ -C' \Bigr) \norm{\bar{u}}^2_{L^2(\Omega)} - K
\end{eqnarray*}
where $C'= \frac{2\tau  C_{disp}^2 \Bigl(1+ \norm{v_h}_{L^{\infty}(\Omega)}^{\frac{1}{2}} \Bigr)^2}{ R \psi^3_-}$ and\\
$K=  \frac{2}{\psi_-} \Bigl((R \psi_+^2 + \frac{\tau^2 r_+}{R}) \norm{c_h^{n-1}}^2_{L^2(\Omega)} +\frac{\tau^2}{R}  \norm{p}^2_{L^2(\Omega)}\Bigr)$. \vspace{3mm}\\ 
By taking a time step small enough such that $\psi^{-1}_+ - C'  > 0$ and by denoted $\displaystyle m= \min \{ \frac{R\psi_-}{2}, \psi^{-1}_+ - C' \}$, we get
$$((P(\xi),\xi)) \geq m \vert \vert \vert \xi \vert \vert \vert^2 - K.$$
We are now able to prove that for all $\xi \in \R^{N_1+N_2}$ satisfying $\vert \vert \vert \xi \vert \vert \vert^2 = 2 K /m$, we get $((P(\xi),\xi)) \geq K > 0$. According to Lemme 1.4 page 164 of Temam \cite{Temam}, there exists a $\xi \in \R^{N_1 + N_2}$ such that $P(\xi)=0$. We deduce the existence of $c_h^{n}$ solution of the problem. \\
\\
\textbf{Uniqueness of $c_h^n$}\\
The discretized space step $h$ being unchanged, we will omit it thereafter. We assume that there exist two solutions $(c_{1}^{n},v_{c,1}^n)$,$(c_{2}^{n},v_{c,2}^n)$ $\in W_h^2 \times U_h^2$ to \eqref{discretc} from the same solution $(c_{h}^{n-1},{v_{c}}_h^{n-1})$. They thus satisfied
\begin{align}
\label{discretc1}
\langle R \psi (c_{1}^n - c_h^{n-1}), w_{h}  \rangle 
+ \tau \langle  \divg (S(v_h)^{\frac{1}{2}}v_{c,1}^n),  w_{h}    \rangle 
+ \tau \langle r(c_h^{n-1}),w_h  \rangle 
- \tau \langle p,w_h   \rangle = 0 \\
\langle S(v_h)^{-\frac{1}{2}} \psi^{-1} v_{c,1}^n , u_{h}   \rangle - \langle  c_1^n , \divg(u_h)   \rangle - \langle S(v_h)^{-1} \psi^{-1} v_h c_1^n , u_{h}   \rangle = 0
\label{discretc1b}
\end{align}
and
\begin{align}
\label{discretc2}
\langle R \psi (c_2^n - c_h^{n-1}), w_{h}  \rangle 
+ \tau \langle  \divg (S(v_h)^{\frac{1}{2}}v_{c,2}^n),  w_{h}    \rangle 
+ \tau \langle r(c_h^{n-1}),w_h  \rangle 
- \tau \langle p,w_h   \rangle = 0 \\
\langle S(v_h)^{-\frac{1}{2}} \psi^{-1} v_{c,2}^n , u_{h}   \rangle - \langle  c_2^n , \divg(u_h)   \rangle - \langle S(v_h)^{-1} \psi^{-1} v_h c_2^n , u_{h}   \rangle = 0.
\label{discretc2b}
\end{align}
We substract \eqref{discretc1} and \eqref{discretc2}, and \eqref{discretc1b} and \eqref{discretc2b}. Let $C=c_1^{n} - c_2^{n}$ and $V_c =  v_{c,1}^n - v_{c,2}^n$. We get
\begin{align}
\label{discretcbis}
\langle R \psi C, w_{h}  \rangle 
+ \tau \langle  \divg (S(v_h)^{\frac{1}{2}}V_{c}),  w_{h}    \rangle  = 0 \\
\langle S(v_h)^{-\frac{1}{2}} \psi^{-1} V_{c} , u_{h}   \rangle - \langle  C , \divg(u_h)   \rangle - \langle S(v_h)^{-1} \psi^{-1} v_h C , u_{h}   \rangle = 0.
\label{discretcbis2}
\end{align}
In order to remove the second terms in each equations, we choose the test functions $w_h = C$ and $u_h= \tau S(v_h)^{\frac{1}{2}} V_c$ and we add \eqref{discretcbis} and \eqref{discretcbis2}. We get: 
 \begin{eqnarray}
 \label{discretC}
\langle R \psi C , C \rangle + \tau \langle  S(v_h)^{-\frac{1}{2}} \psi^{-1}  V_c, S(v_h)^{\frac{1}{2}}V_c \rangle   - \tau \langle S(v_h)^{-1} \psi^{-1} v_h C, S(v_h)^{\frac{1}{2}}V_c \rangle = 0.
\end{eqnarray}
We now estimate the three terms of the equations as follows: 
$$I_1 = \langle R \psi C , C\rangle \geq  R \psi_- \norm{C}^2_{L^2(\Omega)}, $$ 
$$I_2= \tau \langle  S(v_h)^{-\frac{1}{2}} \psi^{-1}  V_c, S(v_h)^{\frac{1}{2}}V_c \rangle  \geq \tau \psi^{-1}_+  \norm{V_c}^2_{L^2(\Omega)}.$$
According to \eqref{eq5.7.2},
$$\abs{I_3}= \abs{\tau \langle S(v_h)^{-1} \psi^{-1} v_h C, S(v_h)^{\frac{1}{2}}V_c \rangle} \leq \tau \psi_{-}^{-1}  C_{disp} \Bigl(1+ \norm{v_h}_{L^{\infty}(\Omega)}^{\frac{1}{2}} \Bigr) \norm{C}_{L^2(\Omega)}  \norm{V_c}_{L^2(\Omega)} $$
thus
\begin{align*}\abs{I_3}= \abs{\tau \langle S(v_h)^{-1} \psi^{-1} v_h C, S(v_h)^{\frac{1}{2}}V_c \rangle} \leq & \frac{R \psi_{-}}{2} \norm{C}_{L^{2}(\Omega)}^2 &&\\
&+ \frac{ \tau^2  C_{disp}^2 \Bigl(1+ \norm{v_h}_{L^{\infty}(\Omega)}^{\frac{1}{2}} \Bigr)^2}{2 R \psi^3_-} \norm{V_c}_{L^2(\Omega)}^2.  &&
\end{align*}
Equation \eqref{discretC} is then
$$ \frac{R \psi_-}{2}  \norm{C}_{L^2(\Omega)}^2  + \tau \Bigl(\psi^{-1}_+ - C'' \Bigr) \norm{V_c}_{L^2(\Omega)}^2\leq 0$$
with $C''= \frac{ \tau  C_{disp}^2 \Bigl(1+ \norm{v_h}_{L^{\infty}(\Omega)}^{\frac{1}{2}} \Bigr)^2}{2 R \psi^3_-} $. \\
For a time step enough small such that $\psi_+^{-1} - C''  > 0$, we get from the previews inequality: 
$$ \norm{C}_{L^2(\Omega)}^2= 0 \text{ and } \norm{V_c}_{L^2(\Omega)}^2=0 \Rightarrow c_1^n = c_{2}^{n} \text{ and } v_{c,1}^n = v_{c,2}^n = 0 \text{ almost everywhere in $\Omega$.}$$
The solution $(c_h^n, {v_c}_h^n)$ of the problem \eqref{discretc}-\eqref{discretvc} is then unique. \\
\\
\textbf{Estimation in function of $h$ and $\tau$} \label{page:traitementI6} \\
\\
In order to obtain estimations in function of $h$ and $\tau$, we use the estimations of $I_6$ : \label{pageI6}
\begin{eqnarray*}
\abs{I_6} \leq \tau  \langle \psi^{-1}  C_{disp} \Bigl(1+ \norm{v_h}_{L^{\infty}(\Omega)}^{\frac{1}{2}} \Bigr) \bar{w}, \bar{u}  \rangle  \leq \tau \psi^{-1}_- C_{disp} \sum \limits_{\mathcal{O}_h \in \mathcal{T}_h} \int_{\mathcal{O}_h} (1+\abs{v_h}^{\frac{1}{2}}) \abs{\bar{w}_h} \abs{\bar{u}_h} \dx .
\end{eqnarray*} 
In the case where assumption (ii) (b) of Theorem \ref{thm:existeuniquediscretetat} is satisfied, estimation of this term is obvious and we directly get \eqref{CFLc}. Else, by the Cauchy-Schwarz inequality, we get
\begin{align*}
\abs{I_6} &\leq \tau \psi^{-1}_- C_{disp} \sum \limits_{\mathcal{O}_h \in \mathcal{T}_h} \Bigl( \int_{\mathcal{O}_h}(1+ \abs{v_h}^{\frac{1}{2}})^2 \dx \Bigr)^{\frac{1}{2}} \Bigl(  \int_{\mathcal{O}_h} \abs{\bar{w}_h}^2\abs{\bar{u}_h}^2 \dx \Bigr)^{\frac{1}{2}}&& \\
&\leq \tau \psi^{-1}_- C_{disp} \sum \limits_{\mathcal{O}_h \in \mathcal{T}_h} \Bigl( \int_{\mathcal{O}_h} C \abs{v_h} \dx \Bigr)^{\frac{1}{2}} \Bigl(  \int_{\mathcal{O}_h} \abs{\bar{w}_h}^2\abs{\bar{u}_h}^2 \dx \Bigr)^{\frac{1}{2}}&&\\
& \leq C \tau \psi^{-1}_- C_{disp}  \sum \limits_{\mathcal{O}_h \in \mathcal{T}_h}   \Biggl( \Bigl(\int_{\mathcal{O}_h}  \abs{v_h}^2  \dx \Bigr)^{\frac{1}{2}} \Bigl(\int_{\mathcal{O}_h}   1^2 \dx  \Bigr)^{\frac{1}{2}} \Biggr)^{\frac{1}{2}}  \Biggl(  \int_{\mathcal{O}_h} \abs{\bar{w}_h}^2\abs{\bar{u}_h}^2 \dx \Biggr)^{\frac{1}{2}}.  &&
\end{align*}
Using that $\norm{v_h}_{L^2(\mathcal{O}_h)} \leq \norm{v_h}_{L^2(\Omega)} $, quantity that we suppose uniform boundedness in Theorem \ref{thm:existeuniquediscretetat}, we find 
\begin{align*}
\abs{I_6} &\leq C \tau \psi^{-1}_- C_{disp}  \sum \limits_{\mathcal{O}_h \in \mathcal{T}_h}  \norm{v_h}^{\frac{1}{2}}_{L^2(\Omega)} \abs{\mathcal{O}_h}^{\frac{1}{4}} \Bigl(  \int_{\mathcal{O}_h} \abs{\bar{w}_h}^2\abs{\bar{u}_h}^2 \dx \Bigr)^{\frac{1}{2}}  &&\\
&\leq C \tau \psi^{-1}_- C_{disp}   \sum \limits_{\mathcal{O}_h \in \mathcal{T}_h} \abs{\mathcal{O}_h}^{\frac{1}{4}} \Bigl(  \int_{\mathcal{O}_h} \abs{\bar{w}_h}^2\abs{\bar{u}_h}^2 \dx \Bigr)^{\frac{1}{2}}.&&
\end{align*}
As $\bar{w}_h$ is constant by mesh, we get
\begin{align*}
\abs{I_6} &\leq C\tau \psi^{-1}_- C_{disp}  \sum \limits_{\mathcal{O}_h \in \mathcal{T}_h} \abs{\mathcal{O}_h}^{\frac{1}{4}} \abs{\bar{w}_h} \Bigl(  \int_{\mathcal{O}_h}  \abs{\bar{u}_h}^2 \dx \Bigr)^{\frac{1}{2}} &&\\
&= C\tau \psi^{-1}_- C_{disp}  \sum \limits_{\mathcal{O}_h \in \mathcal{T}_h} \abs{\mathcal{O}_h}^{\frac{1}{4}}   \Bigl(  \frac{1}{\abs{\mathcal{O}_h}} \int_{\mathcal{O}_h}  \abs{\bar{w}_h}^2 \dx \Bigr)^{\frac{1}{2}}  \Bigl(  \int_{\mathcal{O}_h}  \abs{\bar{u}_h}^2 \dx \Bigr)^{\frac{1}{2}}. &&
\end{align*}
Supposing the mesh is quasi-uniform (because we use finit elements $P_0$ constant by mesh, in other cases we have to adapted the proof by using inverse inequalities (see theorem 4.5.11 and the associated remark in Brenner and Scott  \cite{Brenner})), we get
\begin{eqnarray*}
\abs{I_6} \leq 
 \frac{C\tau \psi^{-1}_- C_{disp}}{h^{\frac{N}{4}}} \sum \limits_{\mathcal{O}_h \in \mathcal{T}_h} \Bigl( \norm{\bar{w}_h}_{L^2(\mathcal{O}_h)} \norm{\bar{u}_h}_{L^2(\mathcal{O}_h)}   \Bigr). 
  \end{eqnarray*}
By the discret Cauchy-Schwarz inequality, we get 
 \begin{align*}
\abs{I_6} &\leq 
 \frac{C\tau \psi^{-1}_- C_{disp}}{h^{\frac{N}{4}}}  \Bigl( \sum \limits_{\mathcal{O}_h \in \mathcal{T}_h}  \norm{\bar{w}_h}^{2}_{L^2(\mathcal{O}_h)}  \Bigr)^{\frac{1}{2}} \Bigl( \sum \limits_{\mathcal{O}_h \in \mathcal{T}_h}  \norm{\bar{u}_h}^{2}_{L^2(\mathcal{O}_h)}  \Bigr)^{\frac{1}{2}}  &&\\
&\leq \frac{C\tau \psi^{-1}_- C_{disp}}{h^{\frac{N}{4}}}  \norm{\bar{w}}_{L^2(\Omega)}   \norm{\bar{u}}_{L^2(\Omega)}. &&
\end{align*}
Finally, according to Young inequality, we get 
\begin{eqnarray*}
\abs{I_6} \leq 
 \frac{R \psi_-}{8} \norm{\bar{w}}_{L^2(\Omega)}^2  +  \frac{2C^2 \tau^2 C_{disp}^2}{h^{\frac{N}{2}}R \psi^3_- }  \norm{\bar{u}}_{L^2(\Omega)}^2 
= \frac{R \psi_-}{8} \norm{\bar{w}}_{L^2(\Omega)}^2  +  C'  \tau \norm{\bar{u}}_{L^2(\Omega)}^2
\end{eqnarray*}
where $C'=  \frac{2C^2 \tau C_{disp}^2}{h^{\frac{N}{2}}R \psi^3_- } = \frac{2C^2 C_{disp}^2 \tau^{\frac{1-\epsilon}{2}} \tau^{\frac{1+\epsilon}{2}}}{h^{\frac{N}{2}}R \psi^3_-} $. \\
As we have proved the existence and the uniqueness of $c_h^n$ and ${v_c}_h^n$, we can write: 
\begin{eqnarray*}
((\mathcal{P}(\xi),\xi)) \geq \frac{R \psi_-}{2}  \norm{c_h^n}^2_{L^2(\Omega)} + \tau \Bigl( \psi^{-1}_+ - C'  \Bigr) \norm{{v_c}_h^n}^2_{L^2(\Omega)} - K
\end{eqnarray*}
where $K=  \frac{2}{\psi_-} \Bigl((R \psi_+^2 + \frac{\tau^2 r_+}{R}) \norm{c_h^{n-1}}^2_{L^2(\Omega)} + \frac{\tau^2}{R}  \norm{p}^2_{L^2(\Omega)}\Bigr)$. \vspace{3mm}\\
If the Courant-Friedrichs-Lewy condition \eqref{numCFL} is satisfied, 
then we get
\begin{eqnarray*}
((\mathcal{P}(\xi),\xi)) \geq \frac{R \psi_-}{2}  \norm{c_h^n}^2_{L^2(\Omega)} + \tau \Bigl( \psi^{-1}_+ - C_{CFL} \tau^{\frac{1+\epsilon}{2}}  \Bigr) \norm{{v_c}_h^n}^2_{L^2(\Omega)} - K.
\end{eqnarray*}
As the computing of $((P(\xi),\xi))$ now implies uniform constants in $h$ and $\tau$, we deduce uniform estimations of the theorem for $\tau$ small enough (such that $\psi_+^{-1} - C_{\footnotesize{CFL}} \tau^{1 + \frac{\epsilon}{2}} > 0$).
\end{proof} 

\textbf{Stability} \\
Lets now focus on the study of the stability. We state the following result:
\begin{thm}
\label{thm:stabilite}
Assume that $(c_h^n,{v_c}_h^n)$ is solution of problem \eqref{discretc}-\eqref{discretvc}. Assume that $r \in L^{\infty}(\R)$ or $\abs{r(x)} \leq r^+ \abs{x}$ for all $x \in \R$ with $r^+ \in \R$ and the condition \eqref{CFLc} is satisfied. We have the following estimations
\begin{multline}
\label{stabilite1}
\frac{1}{\tau}  \sum \limits_{n=1}^{T/ \tau} \norm{c_h^n - c_h^{n-1}}^2_{L^2(\Omega)}  + \sup \limits_{0\leq k \leq T/\tau} \norm{{v_c}_h^k}^2_{L^2(\Omega)} + \sum \limits_{n=1}^{T/ \tau} \norm{{v_c}_h^n - {v_c}_h^{n-1}}^2_{L^2(\Omega)}\\
 \leq C \Bigl(1 +  \norm{v_h}_{L^{\infty}(\Omega)}+ \norm{v_h}_{L^{\infty}(\Omega)}^2 \Bigr)
\end{multline}
and
\begin{eqnarray}
\label{stabilite2}
 \tau \sum \limits_{n=1}^{T/\tau} \norm{ \divg(S(v_h)^{\frac{1}{2}} {v_c}_h^n)}^2_{L^2(\Omega)} \leq C \Bigl(1 +  \norm{v_h}_{L^{\infty}(\Omega)}+ \norm{v_h}_{L^{\infty}(\Omega)}^2 \Bigr). 
\end{eqnarray}
\end{thm}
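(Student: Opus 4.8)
The plan is to derive both estimates by the standard energy method for mixed schemes, using the discrete time increments as test functions and exploiting the telescoping identity $2\langle a^n - a^{n-1}, a^n\rangle = \norm{a^n}^2 - \norm{a^{n-1}}^2 + \norm{a^n - a^{n-1}}^2$, together with the a priori bound \eqref{CFLc} and the dispersion estimate \eqref{eq5.7.2}.

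First, for \eqref{stabilite1}, I would test the mass-balance equation \eqref{discretc} with $w_h = c_h^n - c_h^{n-1} \in W_h$, producing $\langle R\psi(c_h^n - c_h^{n-1}), c_h^n - c_h^{n-1}\rangle \geq R\psi_-\norm{c_h^n - c_h^{n-1}}^2_{L^2(\Omega)}$. To handle the resulting divergence term, I would subtract the flux equation \eqref{discretvc} at steps $n$ and $n-1$ and test the difference with $u_h = S(v_h)^{\frac12}{v_c}_h^n$, an admissible choice exactly as in the uniqueness argument above. Since $S(v_h)^{-\frac12}S(v_h)^{\frac12} = \Id$, this expresses $\langle c_h^n - c_h^{n-1}, \divg(S(v_h)^{\frac12}{v_c}_h^n)\rangle$ through $\langle\psi^{-1}({v_c}_h^n - {v_c}_h^{n-1}), {v_c}_h^n\rangle$ and a dispersion cross term. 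Substituting back and rearranging yields, for each $n$,
\[
\langle R\psi(c_h^n - c_h^{n-1}), c_h^n - c_h^{n-1}\rangle + \tau\langle\psi^{-1}({v_c}_h^n - {v_c}_h^{n-1}), {v_c}_h^n\rangle = \tau\langle S(v_h)^{-1}\psi^{-1}v_h(c_h^n - c_h^{n-1}), S(v_h)^{\frac12}{v_c}_h^n\rangle - \tau\langle r(c_h^{n-1}) - p, c_h^n - c_h^{n-1}\rangle.
\]

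Then I would apply the telescoping identity to the second term on the left, sum over $n$ from $1$ to $k$, and divide by $\tau$: the telescoping collapses to $\tfrac12\int_\Omega\psi^{-1}(\abs{{v_c}_h^k}^2 - \abs{{v_c}_h^0}^2) + \tfrac12\sum_n\int_\Omega\psi^{-1}\abs{{v_c}_h^n - {v_c}_h^{n-1}}^2$, which, after taking the supremum over $k$, supplies the three quantities on the left of \eqref{stabilite1}. The remaining terms I would bound by Cauchy--Schwarz and Young: the reaction and source terms using $\abs{r(x)}\le r^+\abs{x}$ (or $r\in L^\infty(\R)$) and $p\in L^2(\Omega)$, and the dispersion cross term using \eqref{eq5.7.2}, which produces the factor $(1+\norm{v_h}_{L^\infty(\Omega)}^{\frac12})$. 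Young's inequality lets me absorb every $\norm{c_h^n - c_h^{n-1}}^2$ contribution into the left-hand side, and the leftover $\tau\sum_n\norm{{v_c}_h^n}^2$ contributions are controlled directly by \eqref{CFLc}, so no discrete Gr\"onwall argument is needed. The main obstacle, and the source of the precise dependence $C(1 + \norm{v_h}_{L^\infty(\Omega)} + \norm{v_h}_{L^\infty(\Omega)}^2)$, is the bookkeeping of this velocity-dependent term: the factor $(1+\norm{v_h}_{L^\infty(\Omega)}^{\frac12})^2 \sim 1 + \norm{v_h}_{L^\infty(\Omega)}$ from \eqref{eq5.7.2}, multiplied against the a priori control of $\tau\sum_n\norm{{v_c}_h^n}^2$ (whose constant in \eqref{CFLc} already grows like $1+\norm{v_h}_{L^\infty(\Omega)}$), is what generates the quadratic term; one must also take $\tau$ small enough that the absorption constants remain positive and uniform in $k$.

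Finally, \eqref{stabilite2} follows once \eqref{stabilite1} is in hand by testing \eqref{discretc} with $w_h = \divg(S(v_h)^{\frac12}{v_c}_h^n)$, which lies in $W_h$ since the divergence of the discrete flux is piecewise constant. This gives $\tau\norm{\divg(S(v_h)^{\frac12}{v_c}_h^n)}_{L^2(\Omega)}^2 \le [R\psi_+\norm{c_h^n - c_h^{n-1}}_{L^2(\Omega)} + \tau r^+\norm{c_h^{n-1}}_{L^2(\Omega)} + \tau\norm{p}_{L^2(\Omega)}]\,\norm{\divg(S(v_h)^{\frac12}{v_c}_h^n)}_{L^2(\Omega)}$; dividing by the common factor, squaring, multiplying by $\tau$, and summing over $n$ leads to $\tau\sum_n\norm{\divg(S(v_h)^{\frac12}{v_c}_h^n)}_{L^2(\Omega)}^2 \le \tfrac{C}{\tau}\sum_n\norm{c_h^n - c_h^{n-1}}_{L^2(\Omega)}^2 + C\tau\sum_n\norm{c_h^{n-1}}_{L^2(\Omega)}^2 + CT\norm{p}_{L^2(\Omega)}^2$. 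The first sum is exactly the first quantity bounded in \eqref{stabilite1}, and the second is controlled by \eqref{CFLc}, which closes the estimate with the same dependence on $\norm{v_h}_{L^\infty(\Omega)}$.
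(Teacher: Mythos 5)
Your strategy coincides with the paper's almost line by line: the paper also tests \eqref{discretc} with $w_h=c_h^n-c_h^{n-1}$, subtracts the flux equation \eqref{discretvc} at steps $n$ and $n-1$ and tests with $u_h=\tau S(v_h)^{\frac{1}{2}}{v_c}_h^n$, uses the telescoping identity on $\langle \psi^{-1}({v_c}_h^n-{v_c}_h^{n-1}),{v_c}_h^n\rangle$, estimates the reaction, source and dispersion terms by Cauchy--Schwarz, Young and \eqref{eq5.7.2}, and proves \eqref{stabilite2} exactly as you do. The one place where you deviate is your claim that the leftover contribution $C(1+\norm{v_h}_{L^{\infty}(\Omega)})\sum_{n=1}^{k}\tau^{2}\norm{{v_c}_h^n}^2_{L^2(\Omega)}$ is ``controlled directly by \eqref{CFLc}, so no discrete Gr\"onwall argument is needed.'' This step fails, and it is the only genuinely delicate point of the proof. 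Estimate \eqref{CFLc} provides only $\tau\norm{{v_c}_h^n}^2_{L^2(\Omega)}\le C$, i.e. $\norm{{v_c}_h^n}^2_{L^2(\Omega)}\le C/\tau$. Feeding this into the summed cross term gives $C(1+\norm{v_h}_{L^{\infty}(\Omega)})\,\tau\sum_{n=1}^{k}\tau\norm{{v_c}_h^n}^2_{L^2(\Omega)}\le CT\,(1+\norm{v_h}_{L^{\infty}(\Omega)})$, a quantity that is $O(1)$ in $\tau$, not $O(\tau)$. But the three quantities to be controlled sit on the left-hand side with a factor $\tau$: the telescoped term is $\frac{\tau}{2}\norm{\psi^{-\frac12}{v_c}_h^k}^2_{L^2(\Omega)}$, the flux increments carry $\frac{\tau}{2}$, and the concentration increments must afterwards be divided by $\tau$. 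A right-hand side that is merely $O(1)$ therefore yields $\sup_k\norm{{v_c}_h^k}^2_{L^2(\Omega)}\le C(1+\norm{v_h}_{L^{\infty}(\Omega)})/\tau$ and $\frac{1}{\tau}\sum_n\norm{c_h^n-c_h^{n-1}}^2_{L^2(\Omega)}\le C(1+\norm{v_h}_{L^{\infty}(\Omega)})/\tau$, which blow up as $\tau\to 0$ and do not prove \eqref{stabilite1}.

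What is needed, and what the paper does, is to first upgrade the weak bound $\norm{{v_c}_h^n}^2_{L^2(\Omega)}\le C/\tau$ to a bound uniform in $n$ and $\tau$: isolate $\tau\norm{{v_c}_h^k}^2_{L^2(\Omega)}\le C\tau+\tau\norm{v_h}_{L^{\infty}(\Omega)}\sum_{n=1}^{k}\tau\norm{{v_c}_h^n}^2_{L^2(\Omega)}$ from the summed inequality \eqref{somme} and apply the discrete Gr\"onwall lemma to $a_k=\norm{{v_c}_h^k}^2_{L^2(\Omega)}$ with weights $b_n=\tau\norm{v_h}_{L^{\infty}(\Omega)}$, whose total mass $\sum_n b_n\le T\norm{v_h}_{L^{\infty}(\Omega)}$ is bounded independently of $\tau$; this gives $\sup_k\norm{{v_c}_h^k}^2_{L^2(\Omega)}\le C(1+\norm{v_h}_{L^{\infty}(\Omega)})$ as stated in the paper. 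Only after this sup bound is substituted back into the right-hand side of \eqref{somme} does that side become $C\tau\bigl(1+\norm{v_h}_{L^{\infty}(\Omega)}+\norm{v_h}^2_{L^{\infty}(\Omega)}\bigr)$ --- which, incidentally, is where the quadratic term really comes from --- and division by $\tau$ then yields \eqref{stabilite1}. With \eqref{stabilite1} in hand, your derivation of \eqref{stabilite2} (testing with $w_h=\divg(S(v_h)^{\frac12}{v_c}_h^n)$, Young's inequality, summation, and per-step use of \eqref{CFLc} on $\norm{c_h^{n-1}}_{L^2(\Omega)}$) is correct and is the paper's argument verbatim; only the Gr\"onwall step is missing from your plan, and it cannot be dispensed with.
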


\begin{proof}
We recall \eqref{discretc} and \eqref{discretvc}:
\begin{eqnarray*}
\langle R \psi (c_h^n - c_h^{n-1}),w_h \rangle + \tau \langle \divg(S(v_h)^{\frac{1}{2}} {v_c}_h^n),w_h \rangle + \tau \langle  r(c_h^{n-1}),w_h \rangle - \tau \langle p,w_h \rangle = 0, \\
 \langle S(v_h)^{-\frac{1}{2}} \psi^{-1} {v_c}_h^n ,u_h \rangle - \langle  c_h^n,\divg(u_h) \rangle -  \langle S(v_h)^{-1} \psi^{-1} v_h c_h^n,u_h \rangle = 0.
\end{eqnarray*}
By rewriting equation \eqref{discretvc} in time $t^n$ and $t^{n-1}$ and by substracting the two equations, we get: 
\begin{align}
\label{nouveaudiscretvc}
 \langle S(v_h)^{-\frac{1}{2}} \psi^{-1} ({v_c}_h^n - {v_c}_h^{n-1}),u_h \rangle &- \langle  (c_h^n - c_h^{n-1}),\divg(u_h) \rangle  \nonumber &&\\
 &-  \langle S(v_h)^{-1} \psi^{-1} v_h (c_h^n - c_h^{n-1}),u_h \rangle = 0.&&
\end{align}
Adding equations \eqref{discretc} and \eqref{nouveaudiscretvc} and letting $w_h = c_h^n - c_h^{n-1}$ and $u_h = \tau S(v_h)^{\frac{1}{2}}  {v_c}_h^n$ : 
\begin{multline*}
\langle R \psi (c_h^n - c_h^{n-1}),(c_h^n - c_h^{n-1}) \rangle + \tau \langle  r(c_h^{n-1}),(c_h^n - c_h^{n-1}) \rangle - \tau \langle p,(c_h^n - c_h^{n-1}) \rangle \\
 + \tau \langle S(v_h)^{-\frac{1}{2}} \psi^{-1} ({v_c}_h^n - {v_c}_h^{n-1}),S(v_h)^{\frac{1}{2}}  {v_c}_h^n \rangle  - \tau \langle S(v_h)^{-1} \psi^{-1} v_h (c_h^n - c_h^{n-1}),S(v_h)^{\frac{1}{2}}  {v_c}_h^n \rangle= 0.
\end{multline*}
We estimated the five terms: 
\begin{align*}
I_1 &\geq R \psi_- \norm{c_h^n - c_h^{n-1}}^2_{L^2(\Omega)}, &&\\
 \abs{I_2} &\leq \frac{\tau^2}{R \psi_-}  \norm{r(c_h^{n-1})}^2_{L^2(\Omega)} + \frac{R \psi_-}{4} \norm{c_h^n - c_h^{n-1}}^2_{L^2(\Omega)}, &&\\
 \abs{I_3} &\leq \frac{\tau^2}{R \psi_-}  \norm{p}^2_{L^2(\Omega)} + \frac{R \psi_-}{4} \norm{c_h^n - c_h^{n-1}}^2_{L^2(\Omega)}, &&\\
 I_4 &= \frac{\tau}{2} \int_{\Omega} \psi^{-1} \Bigl( ({v_c}_h^n)^2 - ({v_c}_h^{n-1})^2 + ({v_c}_h^n - {v_c}_h^{n-1})^2 \Bigr) \dx &&\\
&= \frac{\tau}{2} \norm{\psi^{-\frac{1}{2}} {v_c}_h^n}^2_{L^2(\Omega)} + \frac{\tau}{2} \norm{\psi^{-\frac{1}{2}} ({v_c}_h^n - {v_c}_h^{n-1})}^2_{L^2(\Omega)} - \frac{\tau}{2} \norm{\psi^{-\frac{1}{2}} {v_c}_h^{n-1}}^2_{L^2(\Omega)},&&
\end{align*}
because we have assumed $S(v_h)$ is symmetric, and by using \eqref{eq5.7.2} 
\begin{align*} 
\abs{I_5} &\leq  \frac{\tau^2 C_{disp}^2 \Bigl(1+ \norm{v_h}_{L^{\infty}(\Omega)}^{\frac{1}{2}} \Bigr)^2}{R \psi^3_-} \norm{{v_c}_h^n}^2_{L^2(\Omega)} + \frac{R \psi_-}{4} \norm{ c_h^n - c_h^{n-1}}^2_{L^2(\Omega)} &&\\
 &\leq  \frac{C \tau^2 C_{disp}^2 \norm{v_h}_{L^{\infty}(\Omega)}}{R \psi^3_-} \norm{{v_c}_h^n}^2_{L^2(\Omega)} + \frac{R \psi_-}{4} \norm{ c_h^n - c_h^{n-1}}^2_{L^2(\Omega)}.&&
 \end{align*}
We thus get
\begin{align*}
\frac{R\psi_-}{4} \norm{ c_h^n - c_h^{n-1}}^2_{L^2(\Omega)}& + \frac{\tau}{2}  \norm{\psi^{-\frac{1}{2}} ({v_c}_h^n - {v_c}_h^{n-1})}^2_{L^2(\Omega)} + \frac{\tau}{2} \norm{\psi^{-\frac{1}{2}} {v_c}_h^n}^2_{L^2(\Omega)} &&\\
&\leq \frac{\tau}{2} \norm{\psi^{-\frac{1}{2}} {v_c}_h^{n-1}}^2_{L^2(\Omega)}&& \\
& + \frac{\tau^2}{R \psi_-} \Bigl(  \norm{r(c_h^{n-1})}^2_{L^2(\Omega)} + \norm{p}^2_{L^2(\Omega)} &&\\
&+  C_{disp}^2 C\norm{v_h}_{L^{\infty}(\Omega)}\psi^{-2}_- \norm{{v_c}_h^n}^2_{L^2(\Omega)} \Bigr).&&
\end{align*}
If $r \in L^{\infty}(\Omega)$, we get $\norm{r(c_h^{n-1})}^2_{L^2(\Omega)}  \leq C$. Similarly, we have assumed that condition \eqref{CFLc} is satisfied, thus we get $\norm{c_h^n}_{L^2(\Omega)} \leq C$. Then, if $\abs{r(x)} \leq r^+ \abs{x}$, $x \in \R$, we get $\norm{r(c_h^{n-1})}^2_{L^2(\Omega)}  \leq C$. Thus
\begin{multline*}
\frac{R\psi_-}{4} \norm{ c_h^n - c_h^{n-1}}^2_{L^2(\Omega)} + \frac{\tau}{2}  \norm{\psi^{-\frac{1}{2}} ({v_c}_h^n - {v_c}_h^{n-1})}^2_{L^2(\Omega)} + \frac{\tau}{2} \norm{\psi^{-\frac{1}{2}} {v_c}_h^n}^2_{L^2(\Omega)} \\
 \leq \frac{\tau}{2} \norm{\psi^{-\frac{1}{2}} {v_c}_h^{n-1}}^2_{L^2(\Omega)} 
 + \frac{\tau^2}{R \psi_-} \Bigl( C+  C_{disp}^2 C \norm{v_h}_{L^{\infty}(\Omega)} \psi^{-2}_- \norm{{v_c}_h^n}^2_{L^2(\Omega)} \Bigr).
\end{multline*}
By adding for $n=1,...,k$ and noticing that $\sum \limits_{n\leq T/\tau} C \tau^2 \leq C \tau$, we get
\begin{multline}
\label{somme}
C \sum \limits_{n=1}^k \norm{c_h^n - c_h^{n-1}}^2_{L^2(\Omega)} + C \tau \sum \limits_{n=1}^k \norm{{v_c}_h^n - {v_c}_h^{n-1}}^2_{L^2(\Omega)} + \frac{\tau}{2} \norm{{v_c}_h^{k}}^2_{L^2(\Omega)} \\
 \leq C \tau + 2 \tau \norm{v_h}_{L^{\infty}(\Omega)} \Bigl( \sum \limits_{n=1}^k \frac{\tau}{2} \norm{{v_c}_h^n}^2_{L^2(\Omega)} \Bigr).
\end{multline}
Lets start by considering
$$ \tau \norm{{v_c}_h^k}^2_{L^2(\Omega)} \leq C \tau + 2 \tau \norm{v_h}_{L^{\infty}(\Omega)} \Bigl( \sum \limits_{n=1}^k \frac{\tau}{2} \norm{{v_c}_h^n}^2_{L^2(\Omega)} \Bigr). $$
The discret Gronwall inequality \footnote{If $(a_k)_{k \geq 0}$, $(b_k)_{k \geq 0}$, $(c_k)_{k \geq 0}$, are three sequences with positive termes such that for all $k \geq 0$ we get $a_{k+1} \leq c_{k+1} + \sum \limits_{n=0}^{k} a_n b_n$, then $a_{k+1} \leq c_{k+1} + \sum \limits_{n=0}^{k} c_n b_n \exp(\sum \limits_{j=n+1}^{k} b_j)$.} gives : 
$$ \norm{{v_c}_h^n}^2_{L^2(\Omega)} \leq C \Bigl(1 +  \norm{v_h}_{L^{\infty}(\Omega)} \Bigr). $$
We can also deduce from \eqref{somme} : 
$$ \sum \limits_{n=1}^k \norm{c_h^n - c_h^{n-1}}^2_{L^2(\Omega)} \leq C \tau \Bigl(1 +  \norm{v_h}_{L^{\infty}(\Omega)} + \norm{v_h}_{L^{\infty}(\Omega)}^2 \Bigr)$$
and
$$\sum \limits_{n=1}^k \norm{{v_c}_h^n - {v_c}_h^{n-1}}^2_{L^2(\Omega)}   \leq C \Bigl(1 +  \norm{v_h}_{L^{\infty}(\Omega)}+ \norm{v_h}_{L^{\infty}(\Omega)}^2 \Bigr).$$
As the results are satisfied for all $k$, $0 \leq k \leq T/ \tau$, we conclude: 
\begin{multline}
\label{estimc}   
 \frac{1}{\tau}  \sum \limits_{n=1}^{T/ \tau} \norm{c_h^n - c_h^{n-1}}^2_{L^2(\Omega)}  + \sup \limits_{0\leq k \leq T/\tau} \norm{{v_c}_h^k}^2_{L^2(\Omega)} + \sum \limits_{n=1}^{T/ \tau} \norm{{v_c}_h^n - {v_c}_h^{n-1}}^2_{L^2(\Omega)} \\ 
 \leq C \Bigl(1 +  \norm{v_h}_{L^{\infty}(\Omega)} + \norm{v_h}_{L^{\infty}(\Omega)}^2 \Bigr).
\end{multline}
We return to equation \eqref{discretc} by taking $w_h = \divg(S(v_h)^{\frac{1}{2}} {v_c}_h^n)$. We get: 
\begin{multline*}
\langle R \psi (c_h^n - c_h^{n-1}),\divg(S(v_h)^{\frac{1}{2}} {v_c}_h^n) \rangle + \tau \langle \divg(S(v_h)^{\frac{1}{2}} {v_c}_h^n),\divg(S(v_h)^{\frac{1}{2}} {v_c}_h^n) \rangle \\
+ \tau \langle  r(c_h^{n-1}),\divg(S(v_h)^{\frac{1}{2}} {v_c}_h^n) \rangle - \tau \langle p,\divg(S(v_h)^{\frac{1}{2}} {v_c}_h^n) \rangle = 0.
\end{multline*}
We estimate the following terms: 
\begin{align*}
 \abs{I_1} &\leq \frac{R^2 \psi_+^2}{\tau} \norm{c_h^n - c_h^{n-1}}^2_{L^2(\Omega)} + \frac{\tau}{4} \norm{ \divg(S(v_h)^{\frac{1}{2}} {v_c}_h^n)}^2_{L^2(\Omega)},  &&\\
 I_2& \geq \tau \norm{ \divg(S(v_h)^{\frac{1}{2}} {v_c}_h^n)}^2_{L^2(\Omega)}, &&\\
 \abs{I_3} &\leq C\tau + \frac{\tau}{4} \norm{ \divg(S(v_h)^{\frac{1}{2}} {v_c}_h^n)}^2_{L^2(\Omega)}&&
 \end{align*}
because $r$ is assumed $L^{\infty}(\Omega)$ or sublinear with condition \eqref{CFLc}, and
$$ \abs{I_4} \leq C \tau + \frac{\tau}{4} \norm{ \divg(S(v_h)^{\frac{1}{2}} {v_c}_h^n)}^2_{L^2(\Omega)}. $$
Thus we get
\begin{eqnarray*}
\frac{\tau}{4} \norm{ \divg(S(v_h)^{\frac{1}{2}} {v_c}_h^n)}^2_{L^2(\Omega)} \leq C \tau + \frac{R^2 \psi_+^2}{\tau} \norm{ c_h^n - c_h^{n-1}}^2_{L^2(\Omega)}.
\end{eqnarray*}
Adding for $n=1,...,k$, we get 
\begin{eqnarray*}
C \tau \sum \limits_{n=1}^k \norm{ \divg(S(v_h)^{\frac{1}{2}} {v_c}_h^n)}^2_{L^2(\Omega)} \leq C+ \frac{C}{\tau} \sum \limits_{n=1}^k \norm{ c_h^n - c_h^{n-1}}^2_{L^2(\Omega)}.
\end{eqnarray*}
According to \eqref{estimc}, $\frac{C}{\tau} \sum \limits_{n=1}^k \norm{ c_h^n - c_h^{n-1}}^2_{L^2(\Omega)} \leq C \Bigl(1 +  \norm{v_h}_{L^{\infty}(\Omega)} + \norm{v_h}_{L^{\infty}(\Omega)}^2 \Bigr))$. We conclude that  
\begin{eqnarray}
\label{estimvc}
\tau \sum \limits_{n=1}^{T/\tau} \norm{ \divg(S(v_h)^{\frac{1}{2}} {v_c}_h^n)}^2_{L^2(\Omega)} \leq C \Bigl(1 +  \norm{v_h}_{L^{\infty}(\Omega)} + \norm{v_h}_{L^{\infty}(\Omega)}^2 \Bigr).
\end{eqnarray}
\end{proof}

\section{Convergence}
\subsection{Scheme's convergence}
We recall the problem: 
\begin{eqnarray}
\label{problemeP}
\begin{cases}
R \psi \partial_t c + \divg(v c - S(v) \psi \nabla c)= - r(c) + p \chi_{\mathcal{S}}  \text{ in $\Omega_T$}\\
S(v) \nabla c \cdot n = 0 \text{ ou } c=0 \text{ on $\partial \Omega \times (0,T)$} \\
c_{|{t=0}} =c_0 \text{ in $\Omega$.}
\end{cases}
\end{eqnarray}
We chose here a Neumann boundary condition. From now, we assume that one of the following assumptions is satisfied: \\
- $\partial \Omega$ is $\C^2$, $\kappa \in (\C^1(\Omega))^{N \times N}$ ; \\ 
- $\partial \Omega$ is $\C^2$, $\kappa = \kappa^* Id$ with $\kappa^* : \bar{\Omega} \rightarrow \R$ and $\kappa \in \C^1(\Omega)$ ; \\ 
which ensure that solution $v$ of the problem is $L^{\infty}(\Omega_T)$. Moreover, assume that $v_h$, its discretization, is given by the scheme presented in \eqref{discretphi}. We recall that according to Proposition \ref{prop:convBernardi}, we get
$$ \lim \limits_{h \rightarrow 0} \norm{v_h - v}_{(L^2(\Omega_T))^N} = 0 \text{ and } \norm{v_h}_{(L^{\infty}(\Omega_T))^N} \leq C.$$
It follows that \eqref{CFLc} and \eqref{stabilite1}-\eqref{stabilite2} are satisfied. \\
As we will work with the mixed formulation, we use again the flow notation
$$ v_c = -S(v) \psi \nabla c + v c. $$
We introduce the following spaces: 
$$ \mathcal{W} := H^1(0,T;L^2(\Omega)),$$
$$ \mathcal{U} := L^2(0,T;H(\divg;\Omega)).$$
For the fully discretization, we recall the definition of the following discret sub-spaces: $\mathcal{W}_h \subset L^2(\Omega)$ and $\mathcal{U}_h \subset H(\divg;\Omega)$ defined as follows: 
$$ \mathcal{W}_h := \{ c \in L^2(\Omega), c \text{ is constant on each element $\mathcal{O} \in \mathcal{T}_h$} \},$$
$$ \mathcal{U}_h := \{ {v_c} \in H(\divg,\Omega), {v_c}  \text{ is linear on each element $\mathcal{O} \in \mathcal{T}_h$} \}.$$
We define also the following projections:
$$ P_h : L^2(\Omega) \mapsto \mathcal{W}_h, \langle P_h w - w, w_h \rangle = 0 $$
for all $w_h \in \mathcal{W}_h$. Similarly, projection $\Pi_h$ is defined on $(H^1(\Omega))^2$ such that
$$ \Pi_h : (H^1(\Omega))^2 \mapsto \mathcal{U}_h, \langle \divg(\Pi_h {v_c} - v_c), w_h \rangle = 0 $$
for all $w_h \in \mathcal{W}_h$. According to Kumar et al. \cite{Kumar}, this operator can be extended to $H(\divg;\Omega)$ and we get the following estimations: 
\begin{eqnarray}
\label{estimationsprojection}
\begin{cases}
\norm{w-P_h w}_{L^2(\Omega)} \leq C h \norm{w}_{H^1(\Omega)} \text{ for all $w \in H^1(\Omega)$,} \\
\norm{{v_c}-\Pi_h {v_c}}_{L^2(\Omega)} \leq C h \norm{v_c}_{H^1(\Omega)} \text{ for all ${v_c} \in (H^1(\Omega))^N$,} \\
\norm{\divg {v_c}-\divg(\Pi_h {v_c})}_{L^2(\Omega)} \leq C h \norm{v_c}_{H^2(\Omega)} \text{ for all ${v_c} \in (H^2(\Omega))^N$.}  
\end{cases}
\end{eqnarray}
In order to prove the convergence of the discretized problem, we will first prove the convergence for the semi-discretized in time problem, and then we will study the convergence of the fully discretized problem.
We follow the proof of Kumar et al. \cite{Kumar}. Contrary to \cite{Kumar}, we will not consider the ions transport. However, our equations have a second member and the dispersion tensor (depending on the velocity) is taking into account, which complicating the proofs. Results obtained in previous sections on the stability will be useful in these proofs. 
\subsubsection{Mixed in time variational formulation} \label{subsub:semidiscret}
In what follows, we denoted the time step by $\tau$ and $t_n = n\tau$ for $n=1,...,T/\tau$ in order to consider the time discretization (implicit in $c$) with a uniform time step. At each time step $t_n$, we use $c^{n-1} \in L^2(\Omega)$ computed at $t_{n-1}$ in order to find the following approximation $c^n$. The initialization is $c_0$. More specifically, we find $(c^n, {v_c}^n) \in (L^2(\Omega), H(\divg;\Omega))$ satisfying the following (time) semi-discretized problem : \\
\\
\noindent\fbox{\parbox{\linewidth \fboxrule \fboxsep}{Problem $\mathcal{P}^n$ : for $(c^{n-1},p)$ given in $(L^2(\Omega))^2$, finding $(c^n, {v_c}^n) \in (L^2(\Omega), H(\divg;\Omega))$ such that
\begin{eqnarray}
\label{mixte1}
\langle R \psi (c^n - c^{n-1}), w  \rangle 
+ \tau \langle  \divg(S(v)^{\frac{1}{2}}{v_c}^n),  w    \rangle 
+ \tau \langle r(c^{n-1}),w  \rangle 
- \tau \langle p,w  \rangle = 0, \\
\langle S(v)^{-\frac{1}{2}} \psi^{-1} {v_c}^n , u   \rangle - \langle  c^n , \divg(u)   \rangle - \langle S(v)^{-1} \psi^{-1} v c^n , u   \rangle = 0
\label{mixte2}
\end{eqnarray}
for all $(w,u) \in (L^2(\Omega),H(\divg;\Omega))$. }} \\
\\We can prove for $c_n$ the similar estimates obtained for the fully discretized model in the previous section. However, these estimates are not sufficient to pass to the limit.
In order to prove the convergence of the scheme, we need some compactness result. \\
For that purpose, we introduce the following space translation operator: 
$$ \Delta_{\xi} f(\cdot) := f(\cdot) - f(\cdot + \xi) \text{ , $\xi \in \R^N$.} $$
For a given $\xi \in \R^N$, we consider $\Omega_{\xi} \subset \Omega$ such that $\Omega_{\xi} := \{ x \in \Omega, dist(x, \Gamma) > \xi \}$. In this way, translations $\Delta_{\xi} f(x)$ with $x \in \Omega_{\xi}$ are well-defined. We recall that if $u \in H^1(\Omega)$, there exists a constant $C$ such that for all open set $\omega \subset \subset \Omega$ and for all $\xi \in \R^N$ with $\abs{\xi} < dist(\omega, \Omega)$ we get 
\begin{eqnarray}
\label{formulep110} 
\norm{\Delta_{\xi} u - u}_{L^2(\omega)} \leq C \norm{u}_{H^1(\Omega)} \abs{\xi}. 
\end{eqnarray}
We first consider the space translation of $c_h^n$ defined in $\Omega$ and extended by 0 outside $\Omega$. The following lemma allows us to control this translation: 
\begin{lem}
\label{lem:translationc}
We get the following estimate:
\begin{eqnarray}
\label{translationc}
\tau \sum \limits_{n=1}^{T/\tau}\norm{\Delta_{\xi} c^n}^2_{L^2(\Omega_T)} \leq C \abs{\xi}  (1+\norm{v}_{L^{\infty}(\Omega)}+\norm{v}^{2}_{L^{\infty}(\Omega)} ) \leq C \abs{\xi}.
\end{eqnarray}
\end{lem}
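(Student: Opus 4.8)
The plan is to upgrade the $L^2$ information on $c^n$ to an $H^1$ bound via the mixed relation \eqref{mixte2}, and then apply the translation inequality \eqref{formulep110}. First, I would test \eqref{mixte2} against $u \in (C_c^\infty(\Omega))^N$; since such $u$ carry no boundary contribution, the equation reduces to $\langle c^n, \divg(u)\rangle = \langle S(v)^{-\frac12}\psi^{-1}{v_c}^n - S(v)^{-1}\psi^{-1} v c^n, u\rangle$ for all such $u$. This is precisely the assertion that $c^n \in H^1(\Omega)$ with
\[
\nabla c^n = -S(v)^{-\frac12}\psi^{-1}{v_c}^n + S(v)^{-1}\psi^{-1} v c^n .
\]
Because $v \in L^\infty(\Omega)$, the matrix field $S(v)$ is uniformly elliptic and bounded, so $S(v)^{-\frac12}$ and $S(v)^{-1}$ are bounded measurable and the right-hand side indeed lies in $(L^2(\Omega))^N$.

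Second, I would bound $\norm{\nabla c^n}_{L^2(\Omega)}$ term by term. From $S(v)\xi\cdot\xi \ge S_m\abs{\xi}^2$, a consequence of \eqref{eq5.6bis}, one deduces $\abs{S(v)^{-\frac12}\xi} \le S_m^{-\frac12}\abs{\xi}$, while the delicate estimate \eqref{eq5.7.1} yields $\abs{S(v)^{-1} v}\le M_+$; together with $\psi \ge \psi_-$ this gives
\[
\norm{\nabla c^n}_{L^2(\Omega)} \le \psi_-^{-1}S_m^{-\frac12}\norm{{v_c}^n}_{L^2(\Omega)} + \psi_-^{-1}M_+\norm{c^n}_{L^2(\Omega)},
\]
hence $\norm{c^n}^2_{H^1(\Omega)} \le C\bigl(\norm{c^n}^2_{L^2(\Omega)} + \norm{{v_c}^n}^2_{L^2(\Omega)}\bigr)$ with $C$ independent of $n$ and $\tau$.

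Third, applying \eqref{formulep110} to $c^n \in H^1(\Omega)$, extended by zero outside $\Omega$, gives $\norm{\Delta_\xi c^n}^2_{L^2} \le C\abs{\xi}\,\norm{c^n}^2_{H^1(\Omega)}$ for $\abs{\xi}\le 1$; the linear (rather than quadratic) dependence on $\abs{\xi}$ comes from the boundary strip of width $\sim \abs{\xi}$ produced by the zero-extension, which is controlled by a trace estimate, and is in any case no worse than $\abs{\xi}^2 \le \abs{\xi}$. Multiplying by $\tau$, summing over $n$, and inserting the stability bounds already available for $c^n$—the analogue of \eqref{CFLc}, which gives $\tau\sum_n \norm{c^n}^2_{L^2(\Omega)} \le C$, and the analogue of \eqref{stabilite1}, which bounds $\sup_n \norm{{v_c}^n}^2_{L^2(\Omega)}$ and hence $\tau\sum_n \norm{{v_c}^n}^2_{L^2(\Omega)}$ by $C(1 + \norm{v}_{L^\infty(\Omega)} + \norm{v}^2_{L^\infty(\Omega)})$—yields
\[
\tau\sum_{n=1}^{T/\tau}\norm{\Delta_\xi c^n}^2_{L^2} \le C\abs{\xi}\bigl(1 + \norm{v}_{L^\infty(\Omega)} + \norm{v}^2_{L^\infty(\Omega)}\bigr).
\]
The concluding bound $\le C\abs{\xi}$ then follows from $\norm{v}_{L^\infty(\Omega)} \le C$, which holds under the standing assumptions of this section.

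The step I expect to be the main obstacle is securing the $H^1$ bound with constants uniform in $n$ and $\tau$: this relies crucially on the $L^\infty$ bound for $v$ (from Proposition \ref{prop:convBernardi}) to keep $S(v)$ uniformly elliptic and bounded, and on the nontrivial estimate \eqref{eq5.7.1} to absorb the advective term $S(v)^{-1}\psi^{-1} v c^n$. A secondary subtlety is the treatment near $\partial\Omega$, since \eqref{formulep110} only controls translations on compactly contained subsets and the zero-extension must be handled separately to obtain the stated $\abs{\xi}$-modulus.
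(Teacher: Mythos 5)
Your proof is correct, but it takes a genuinely different route from the paper's. The paper never asserts that $c^n$ belongs to $H^1(\Omega)$: it translates equation \eqref{mixte2}, introduces the auxiliary Poisson problem $-\Delta \eta^n = \Delta_{\xi} c^n$ in $\Omega$, $\eta^n=0$ on $\Gamma$, and tests with $u=\nabla \eta^n$, so that the middle term becomes exactly $\norm{\Delta_{\xi}c^n}^2_{L^2(\Omega)}$; the modulus $\abs{\xi}$ is then extracted not from translates of $c^n$ but from translates of the \emph{dual} function $\nabla\eta^n$, via the elliptic regularity bound $\norm{\eta^n}_{H^2(\Omega)} \leq C \norm{\Delta_{\xi}c^n}_{L^2(\Omega)}$, the interior estimate \eqref{formulep110} applied to $\nabla\eta^n$, and the adjoint identity $\langle \Delta_{\xi}f,g\rangle = \langle f, \Delta_{-\xi}g\rangle$, with $c^n$ and $v_c^n$ entering only through their $L^2$ bounds \eqref{CFLc} and \eqref{stabilite1}. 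You instead exploit the fact that in the semi-discrete problem the test space is all of $H(\divg;\Omega)$, so testing \eqref{mixte2} on $(C_c^{\infty}(\Omega))^N$ identifies the distributional gradient $\nabla c^n = -S(v)^{-\frac{1}{2}}\psi^{-1}v_c^n + S(v)^{-1}\psi^{-1}v c^n \in (L^2(\Omega))^N$; combined with $S(v)\geq S_m \Id$ (which, note, holds irrespective of any $L^{\infty}$ bound on $v$) and \eqref{eq5.7.1}, this gives $\tau\sum_n \norm{c^n}^2_{H^1(\Omega)} \leq C\bigl(1+\norm{v}_{L^{\infty}(\Omega)}+\norm{v}^2_{L^{\infty}(\Omega)}\bigr)$ by \eqref{CFLc} and \eqref{stabilite1}, and the conclusion follows from the standard $H^1$ translation estimate together with your boundary-strip argument for the zero extension, which is indeed what caps the modulus at $\abs{\xi}$ rather than $\abs{\xi}^2$. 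Your route is shorter and yields a strictly stronger intermediate fact (a uniform $H^1$ bound on the semi-discrete concentrations, which would even open the door to Aubin--Lions type compactness). What the paper's duality route buys is that it never needs the gradient identification, so its structure survives when the test space is thinned out: in the fully discrete problem \eqref{mixte2bis}, where $u$ ranges only over $\mathcal{U}_h$, your identification is unavailable, and the paper must replace it by the discrete $H^1$ machinery of Lemmas \ref{lem:classique} and \ref{lem:majornormH10}; the semi-discrete duality proof is the continuous counterpart of that argument. Both proofs ultimately rest on the same structural ingredients, namely \eqref{eq5.7.1} to tame the advective term and the stability bounds \eqref{CFLc}, \eqref{stabilite1}.
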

\begin{proof}
We do a space translation on equation \eqref{mixte2}. We get:
\begin{eqnarray}
\label{nump124}
\langle \Delta_{\xi}(S(v)^{-\frac{1}{2}} \psi^{-1} {v_c}^n) , u   \rangle - \langle  \Delta_{\xi}(c^n) , \divg(u)   \rangle - \langle \Delta_{\xi}(S(v)^{-1} \psi^{-1} v c^n) , u  \rangle = 0.
\end{eqnarray}
We build an appropriate test function in order to obtain the above estimate. We get $\eta^n$ such that
\begin{eqnarray*}
\begin{cases}
-\Delta \eta^n = \Delta_{\xi} c^n \text{ in $\Omega$,} \\
\eta^n = 0 \text{ on $\Gamma$.}
\end{cases}
\end{eqnarray*}
By taking $u= \nabla \eta^n$ in \eqref{nump124}, we get: 
\begin{eqnarray}
\label{eqdeltatxi}
\langle \Delta_{\xi}(S(v)^{-\frac{1}{2}} \psi^{-1} {v_c}^n) , \nabla \eta^n   \rangle + \langle  \Delta_{\xi}(c^n) , \Delta_{\xi}(c^n)   \rangle - \langle \Delta_{\xi}(S(v)^{-1} \psi^{-1} v c^n) ,\nabla \eta^n   \rangle = 0.
\end{eqnarray}
Noticing that $\eta^n$ satisfies $\norm{\Delta \eta^n}_{L^2(\Omega)} = \norm{\Delta_{\xi}(c^n)}_{L^2(\Omega)}$, and then $\norm{\eta^n}_{H^2(\Omega)} \leq C \norm{\Delta_{\xi}(c^n)}_{L^2(\Omega)}$. \\
This implies that translations of $\nabla \eta^n$ are controlled by those of $c^n$ pursuant to \eqref{formulep110}. More specifically, according to \eqref{formulep110}, we get: 
\begin{align*}
 \norm{\Delta_{\xi_0} (\nabla \eta^n)}_{L^2(\Omega)} = \norm{\nabla(\Delta_{\xi_0} \eta^n)}_{L^2(\Omega)} &\leq C \abs{\xi_0} \norm{\nabla \eta^n}_{H^1(\Omega)}  &&\\
 &\leq C \abs{\xi_0} \norm{\Delta_{\xi_0}(c^n)}_{L^2(\Omega)} \text{ , $\forall \xi_0 \in \R^N$.}&&
 \end{align*}
From \eqref{CFLc}, $\norm{c^n}_{L^2(\Omega)} \leq C$. As $\norm{\Delta_{\xi_0}(c^n)}_{L^2(\Omega)} \leq 2 \norm{c^n}_{L^2(\Omega)}$, we thus get: 
\begin{eqnarray}
\label{3.11}
\norm{\nabla(\Delta_{\xi_0}\eta^n)}_{L^2(\Omega)} \leq C \abs{\xi_0} \text{ , $\forall \xi_0 \in \R^N$.} 
\end{eqnarray}
Furthermore, by adding equation \eqref{eqdeltatxi} for $n=1,...,T/\tau$, we get the following result\footnote{We use also that for all functions $f$ and $g$ defined on $\Omega$ and extended by $0$ outside of $\Omega$, we get $\langle \Delta_{\xi} f,g \rangle = \langle f, \Delta_{-\xi}g \rangle$. Ineed, $\langle \Delta_{\xi} f,g \rangle = \int_{\Omega} f(x) g(x) \dx - \int_{\Omega} f(x+\xi) g(x) \dx = \int_{\R^3} f(x) g(x) \dx - \int_{\R^3} f(x+\xi) g(x) \dx$ (according to the extended by $0$) where we find by variable change that $\int_{\R^3} f(x+\xi)g(x) \dx = \int_{\R^3} f(x)g(x - \xi) d \xi$.}: 
\begin{eqnarray*}
\tau \sum \limits_{n=1}^{T/\tau} \norm{\Delta_{\xi}c^n}^2_{L^2(\Omega)} = \tau \sum \limits_{n=1}^{T/\tau}  \Bigl( S(v)^{-1}\psi^{-1} v c^n , \nabla (\Delta_{-\xi} \eta^n) \Bigr) - \tau  \sum \limits_{n=1}^{T/\tau}  \Bigl( S(v)^{-\frac{1}{2}} \psi^{-1}{v_c}^n , \nabla (\Delta_{-\xi} \eta^n) \Bigr).  
\end{eqnarray*}
According to \eqref{eq5.7.1}, $\abs{S(v)^{-1} \psi^{-1} v c^n} \leq C M_+ \abs{c^n}$. 
Moreover, by using \eqref{eq.etoile}, $\abs{S(v)^{-\frac{1}{2}} \psi^{-1} {v_c}^n} \leq \frac{C v_c^n}{\Bigl(1+\abs{v}\Bigr)^{\frac{1}{2}}}$. Thus
\begin{align*}
 \tau \sum \limits_{n=1}^{T/\tau} \norm{\Delta_{\xi}c^n}^2_{L^2(\Omega)} 
&\leq  C \tau \sum \limits_{n=1}^{T/\tau} \norm{c^n}_{L^2(\Omega)} \norm{\nabla (\Delta_{-\xi}\eta^n)}_{L^2(\Omega)} &&\\
&+ C \tau \sum \limits_{n=1}^{T/\tau}  \norm{ \frac{1}{(1+\abs{v})^{\frac{1}{2}}} \abs{{v_c}^n}}_{L^2(\Omega)} \norm{\nabla (\Delta_{-\xi} \eta^n)}_{L^2(\Omega)}.&&
\end{align*}
According to \eqref{stabilite1}, $\norm{ \frac{1}{(1+\abs{v})^{\frac{1}{2}}} v_c^n}_{L^2(\Omega)} \leq \norm{{v_c}^n}_{L^2(\Omega_T)} \leq C\Bigl(1+\norm{v}_{L^{\infty}(\Omega)}+\norm{v}^{2}_{L^{\infty}(\Omega)}\Bigr)$, and by using \eqref{CFLc} and \eqref{3.11}, we get:  
\begin{eqnarray*}
 \tau \sum \limits_{n=1}^{T/\tau} \norm{\Delta_{\xi}c^n}^2_{L^2(\Omega)} 
\leq C \abs{\xi} \Bigl(1+\norm{v}_{L^{\infty}(\Omega)}+\norm{v}^{2}_{L^{\infty}(\Omega)}\Bigr) \leq C \abs{\xi}
\end{eqnarray*}
which conclude the proof. 
\end{proof} 
We will now prove the time convergence for the semi-discretized scheme. We consider the sequence $\{ (c^n, {v_c}^n), n = 0,...,T/\tau \}$ solution of problem \eqref{mixte1}-\eqref{mixte2}, and we construct a time-continuous approximation by linear interpolation. For $t \in (t_{n-1}, t_n]$, $n=1,...,T/\tau$, we define
\begin{eqnarray}
\label{Ztau}
Z^{\tau}(t) := z^n \frac{(t-t_{n-1})}{\tau} + z^{n-1} \frac{(t_n-t)}{\tau},
\end{eqnarray}
where $Z^{\tau}$ may refer respectively $C^{\tau}$ or ${V_C}^{\tau}$, with respectively $z^n=c^n$ or $z^n=v_c^n$. 
\begin{lem}
\label{lem:estimationsC}
There exists a constant $C >0$ such that for all $\tau$ we get the following estimates: 
\begin{eqnarray}
\label{estimCtau}
 \norm{C^{\tau}}_{L^2(\Omega_T)} + \tau \norm{{V_C}^{\tau}}_{L^2(\Omega_T)} \leq C, \\
\norm{\partial_t C^{\tau}}_{L^2(\Omega_T)} + \norm{\divg(\psi^{-1} S(v)^{\frac{1}{2}} V_C^{\tau})}_{L^2(\Omega_T)} \leq C\Bigl(1+\norm{v}_{L^{\infty}(\Omega)}+\norm{v}^{2}_{L^{\infty}(\Omega)} \Bigr).
\label{estimdtCtau}
\end{eqnarray}
\end{lem}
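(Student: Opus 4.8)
The plan is to transfer to the time-continuous interpolants $C^{\tau}$ and $V_C^{\tau}$ the discrete bounds already available for the sequence $\{(c^n,v_c^n)\}$, namely the semi-discrete counterparts of \eqref{CFLc}, \eqref{stabilite1} and \eqref{stabilite2}, which hold by repeating verbatim the arguments of Theorems \ref{thm:existeuniquediscretetat} and \ref{thm:stabilite} (using that $v\in L^{\infty}(\Omega_T)$). Three elementary properties of the interpolation \eqref{Ztau} are used throughout: on each subinterval $(t_{n-1},t_n]$ one has the convex combination $Z^{\tau}(t)=\theta z^n+(1-\theta)z^{n-1}$ with $\theta=(t-t_{n-1})/\tau\in[0,1]$; the time derivative $\partial_t Z^{\tau}=(z^n-z^{n-1})/\tau$ is constant on that subinterval; and, since $\divg$ acts only in the space variable, it commutes with the affine-in-time interpolation, so that (with the spatial multiplier $\psi^{-1}S(v)^{\frac{1}{2}}$ evaluated as in the scheme) $\divg(\psi^{-1}S(v)^{\frac{1}{2}}V_C^{\tau})$ is the interpolant of the discrete quantities $\divg(\psi^{-1}S(v)^{\frac{1}{2}}v_c^n)$.

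For \eqref{estimCtau} I would use convexity of $w\mapsto\norm{w}_{L^2(\Omega)}^2$ to get $\norm{C^{\tau}(t)}_{L^2(\Omega)}^2\leq\theta\norm{c^n}_{L^2(\Omega)}^2+(1-\theta)\norm{c^{n-1}}_{L^2(\Omega)}^2$, integrate over each subinterval using $\int_{t_{n-1}}^{t_n}\theta\dt=\int_{t_{n-1}}^{t_n}(1-\theta)\dt=\tau/2$, and sum to obtain $\norm{C^{\tau}}_{L^2(\Omega_T)}^2\leq\tau\sum_{n=0}^{T/\tau}\norm{c^n}_{L^2(\Omega)}^2$. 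The uniform bound $\norm{c^n}_{L^2(\Omega)}\leq C$ from \eqref{CFLc} then gives $\norm{C^{\tau}}_{L^2(\Omega_T)}\leq C$. The identical computation for $V_C^{\tau}$ yields $\norm{V_C^{\tau}}_{L^2(\Omega_T)}^2\leq\tau\sum_{n}\norm{v_c^n}_{L^2(\Omega)}^2$; since \eqref{CFLc} gives $\tau\norm{v_c^n}_{L^2(\Omega)}^2\leq C$, the right-hand side is $\leq C/\tau$, whence $\tau\norm{V_C^{\tau}}_{L^2(\Omega_T)}\leq C\sqrt{\tau}\leq C$. Note that using the $v$-independent estimate \eqref{CFLc} here (rather than \eqref{stabilite1}) is what produces the plain constant $C$ demanded by \eqref{estimCtau}.

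For \eqref{estimdtCtau} the constancy of $\partial_t C^{\tau}$ on each subinterval yields the exact identity $\norm{\partial_t C^{\tau}}_{L^2(\Omega_T)}^2=\frac{1}{\tau}\sum_{n=1}^{T/\tau}\norm{c^n-c^{n-1}}_{L^2(\Omega)}^2$, which is precisely the quantity estimated by \eqref{stabilite1}; hence $\norm{\partial_t C^{\tau}}_{L^2(\Omega_T)}^2\leq C(1+\norm{v}_{L^{\infty}(\Omega)}+\norm{v}_{L^{\infty}(\Omega)}^2)$. For the divergence term, the commutation remark together with the convexity and integration step above reduces $\norm{\divg(\psi^{-1}S(v)^{\frac{1}{2}}V_C^{\tau})}_{L^2(\Omega_T)}^2$ to a constant times $\tau\sum_{n}\norm{\divg(\psi^{-1}S(v)^{\frac{1}{2}}v_c^n)}_{L^2(\Omega)}^2$, which is controlled by \eqref{stabilite2}. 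Since each right-hand side is $\leq C(1+\norm{v}_{L^{\infty}(\Omega)}+\norm{v}_{L^{\infty}(\Omega)}^2)$, and the square root of such a quantity (being $\geq 1$) is again bounded by it, taking square roots and adding the two contributions gives \eqref{estimdtCtau}.

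The interpolation bookkeeping is routine; the two points that carry the real content lie elsewhere. The first, and the main obstacle, is to secure the semi-discrete analogue of \eqref{stabilite1} with the crucial weight $1/\tau$ in front of $\sum\norm{c^n-c^{n-1}}_{L^2(\Omega)}^2$: this is exactly what keeps $\partial_t C^{\tau}$ bounded in $L^2(\Omega_T)$, and it rests on the coercivity-plus-discrete-Gronwall argument of Theorem \ref{thm:stabilite}, which transfers to problem $\mathcal{P}^n$ because $v\in L^{\infty}(\Omega_T)$ turns every $\norm{v}_{L^{\infty}(\Omega)}$-factor into a genuine constant. The second is the mild mismatch between the weighted quantity $\divg(\psi^{-1}S(v)^{\frac{1}{2}}v_c^n)$ appearing in \eqref{estimdtCtau} and the unweighted one controlled by \eqref{stabilite2}; since $\psi^{-1}$ is bounded this is immediate when $\psi$ is constant, and in general I would either re-run the test-function step of Theorem \ref{thm:stabilite} carrying the weight $\psi^{-1}$, or invoke $\psi\in W^{1,\infty}(\Omega)$ so that the product rule contributes only a lower-order term already controlled by \eqref{estimCtau}.
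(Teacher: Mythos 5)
Your proposal is correct and takes essentially the same route as the paper: the paper likewise transfers the discrete bounds \eqref{CFLc}, \eqref{stabilite1} and \eqref{stabilite2} to the interpolants, using the convex-combination bound for $\norm{C^{\tau}}_{L^2(\Omega_T)}$ and $\tau\norm{V_C^{\tau}}_{L^2(\Omega_T)}$, the exact identity $\partial_t C^{\tau}=(c^n-c^{n-1})/\tau$ together with \eqref{stabilite1}, and the fact that $\divg\bigl(S(v)^{\frac{1}{2}}V_C^{\tau}\bigr)$ is the affine interpolant of the quantities controlled by \eqref{stabilite2}. You are in fact slightly more careful than the paper on two points it glosses over: the need to re-derive the semi-discrete analogues of the stability estimates for problem $\mathcal{P}^n$, and the $\psi^{-1}$ weight appearing in \eqref{estimdtCtau} but absent from \eqref{stabilite2}, which the paper's proof silently drops.
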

\begin{proof}
As
\begin{align*}
\norm{C^{\tau}}_{L^2(\Omega_T)}^2 &\leq \norm{\abs{\frac{t-t_{n-1}}{\tau}}\abs{c^{n}} +\abs{\frac{t_{n}-t}{\tau}}\abs{c^{n-1}}}_{L^2(\Omega_T)}^2 \leq  \norm{\abs{c^{n}} +\abs{c^{n-1}}}_{L^2(\Omega_T)}^2 &\\
&\leq 2 \norm{c^n}_{L^2(\Omega_T)}^2 + 2 \norm{c^{n-1}}_{L^2(\Omega_T)}^2,&&
\end{align*} 
according to \eqref{CFLc}, we get
$$ \norm{C^{\tau}}_{L^2(\Omega_T)}^2 \leq 2 \norm{c^n}_{L^2(\Omega_T)}^2 + 2 \norm{c^{n-1}}_{L^2(\Omega_T)}^2 \leq C.$$
Similarly, 
$$ \norm{{V_C}^{\tau}}_{L^2(\Omega_T)}^2 \leq 2 \norm{{v_c}^n}_{L^2(\Omega_T)}^2 + 2 \norm{{v_c}^{n-1}}_{L^2(\Omega_T)}^2 \leq \frac{C}{\tau}. $$
Thus \eqref{estimCtau}.  \\
In order to estimate $\norm{\partial_t C^{\tau}}_{L^2(\Omega_T)}$, we denote for each $\tau \in (t_{n-1}, t_n]$, $\partial_t C^{\tau} = \frac{c^n - c^{n-1}}{\tau}$, which implies that 
$$ \int_{0}^{T} \norm{\partial_t C^{\tau}}_{L^2(\Omega_T)}^2 \dt = \sum \limits_{n=1}^{T/\tau} \int_{t_{n-1}}^{t_n} \frac{1}{\tau^2} \norm{c^n -c^{n-1}}_{L^2(\Omega_T)}^2 \dt \leq \sum \limits_{n=1}^{T/\tau} \frac{1}{\tau} \norm{c^n - c^{n-1}}_{L^2(\Omega_T)}^2.$$
From \eqref{stabilite1}, we thus get
\begin{multline*} \int_{0}^{T} \norm{\partial_t C^{\tau}}^2 \dt \leq C \frac{T}{\tau} \Bigl(1+\norm{v}_{L^{\infty}(\Omega)}+\norm{v}^{2}_{L^{\infty}(\Omega)} \Bigr)
 \leq C \Bigl(1+\norm{v}_{L^{\infty}(\Omega)}+\norm{v}^{2}_{L^{\infty}(\Omega)} \Bigr).
\end{multline*}
Similarly, 
\begin{multline*}
 \norm{{V_C}^{\tau}}^2_{L^2(\Omega)} = \int_{0}^{T} \norm{\partial_t {V_C}^{\tau}}_{L^2(\Omega)}^2 \dt = \sum \limits_{n=1}^{T/\tau} \int_{t_{n-1}}^{t_n} \frac{1}{\tau^2} \norm{{v_c}^n -{v_c}^{n-1}}_{L^2(\Omega)}^2 \dt \\
 \leq \sum \limits_{n=1}^{T/\tau} \frac{1}{\tau} \norm{{v_c}^n - {v_c}^{n-1}}_{L^2(\Omega)}^2 \leq  C\Bigl(1+\norm{v}_{L^{\infty}(\Omega)}+\norm{v}^{2}_{L^{\infty}(\Omega)} \Bigr).
 \end{multline*}
Finally, notice that
$$ \divg(S(v)^{\frac{1}{2}} {V_C}^{\tau}) =  \divg \Bigl(S(v)^{\frac{1}{2}} {V_C}^{n-1}\Bigr) + \frac{t- t_{n-1}}{\tau}  \divg \Bigl(S(v)^{\frac{1}{2}} ({V_C}^{n}-{V_C}^{n-1})\Bigr).$$
According to \eqref{stabilite2} we prove the estimate of $\divg(S(v)^{\frac{1}{2}} {V_C}^{\tau})$ in $L^2(\Omega_T)$.
\end{proof} 
Estimates of previous lemma ensure the existence of functions $c^{\#}, v_c^{\#}$, and the existence of a sub-sequence $\tau \rightarrow 0$ such that
\begin{itemize}
\item $C^{\tau}  \rightharpoonup c^{\#}$ weakly in $L^2(\Omega_T) \cap H^1(0,T;L^2(\Omega))$, \\
\item ${V_C}^{\tau} \rightharpoonup v_c^{\#}$,  $\divg(S(v)^{\frac{1}{2}} V_C^{\tau}) \rightharpoonup \divg(S(v)^{\frac{1}{2}} v_c^{\#})$ weakly in $L^2(0,T;L^2(\Omega)^N)$.
\end{itemize}
In order to obtain a strong convergence, we use the translated estimates as in Lemma \ref{lem:translationc}. We get the following result: 
\begin{lem}
\label{lem:convergenceforteC}
Sequence $C^{\tau}$ strongly converges to $c^{\#}$ in $L^2(\Omega_T)$ when $\tau \rightarrow 0$. 
\end{lem}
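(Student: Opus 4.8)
The plan is to establish relative compactness of the family $\{C^{\tau}\}$ in $L^2(\Omega_T)$ by means of the Kolmogorov--Riesz--Fr\'echet compactness criterion, and then to identify the resulting strong limit with the weak limit $c^{\#}$ already produced above. The criterion asks for three estimates, all uniform in $\tau$: a uniform $L^2(\Omega_T)$ bound, equicontinuity of the space translations, and equicontinuity of the time translations. The first is precisely \eqref{estimCtau} of Lemma \ref{lem:estimationsC}, so no additional work is needed there.

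For the space translations I would transfer the discrete estimate of Lemma \ref{lem:translationc} to the time interpolant. By \eqref{Ztau}, for $t\in(t_{n-1},t_n]$ the translation $\Delta_{\xi}C^{\tau}(t)$ is the convex combination $\frac{t-t_{n-1}}{\tau}\Delta_{\xi}c^n+\frac{t_n-t}{\tau}\Delta_{\xi}c^{n-1}$, so convexity of $\norm{\cdot}_{L^2(\Omega)}^2$ gives $\norm{\Delta_{\xi}C^{\tau}(t)}_{L^2(\Omega)}^2\leq \norm{\Delta_{\xi}c^n}_{L^2(\Omega)}^2+\norm{\Delta_{\xi}c^{n-1}}_{L^2(\Omega)}^2$. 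Integrating in time converts the sum into the discrete norm $\tau\sum_n$, and Lemma \ref{lem:translationc} then yields $\norm{\Delta_{\xi}C^{\tau}}_{L^2(\Omega_T)}^2\leq C\abs{\xi}$, hence $\norm{\Delta_{\xi}C^{\tau}}_{L^2(\Omega_T)}\leq C\abs{\xi}^{1/2}$, which tends to $0$ as $\abs{\xi}\to 0$ uniformly in $\tau$.

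For the time translations I would use the uniform bound on $\partial_t C^{\tau}$ in $L^2(\Omega_T)$ supplied by \eqref{estimdtCtau}. Since $C^{\tau}\in H^1(0,T;L^2(\Omega))$, the standard Bochner translation estimate gives, for $0<h<T$, $\norm{C^{\tau}(\cdot,\cdot+h)-C^{\tau}}_{L^2(\Omega\times(0,T-h))}\leq \abs{h}\,\norm{\partial_t C^{\tau}}_{L^2(\Omega_T)}\leq C\abs{h}$, where the final bound invokes \eqref{estimdtCtau} together with the uniform bound $\norm{v}_{L^{\infty}(\Omega)}\leq C$ in force throughout this section; this is again uniform in $\tau$ and vanishes as $h\to 0$. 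With the three ingredients assembled, the Kolmogorov--Riesz--Fr\'echet theorem produces a subsequence of $\{C^{\tau}\}$ converging strongly in $L^2(\Omega_T)$; by uniqueness of limits this strong limit must coincide with the weak limit $c^{\#}$, and the usual argument (every subsequence has a further subsequence converging strongly to the same $c^{\#}$) upgrades this to strong convergence of the whole family.

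The point requiring the most care is the space-translation estimate near $\partial\Omega$. The Poincar\'e-type bound \eqref{formulep110} holds only on compactly contained subdomains, so a careful proof must separately control the boundary layer of width $\abs{\xi}$ created by extending $c^n$ by zero outside $\Omega$; that contribution is bounded by the uniform $L^2$ mass of $C^{\tau}$ times the measure of the layer, which is of order $\abs{\xi}$, and is therefore consistent with the claimed $O(\abs{\xi})$ estimate. Since this boundary treatment is already incorporated into Lemma \ref{lem:translationc} (whose statement concerns the zero-extension), the only remaining task is the assembly described above.
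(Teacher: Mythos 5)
Your proposal is correct and follows essentially the same route as the paper: the Riesz--Fr\'echet--Kolmogorov criterion, with time translations controlled by the uniform bound on $\partial_t C^{\tau}$ from \eqref{estimdtCtau} and space translations controlled by transferring the discrete estimate of Lemma \ref{lem:translationc} to the linear interpolant $C^{\tau}$. Your additional remarks (the convexity bookkeeping, the boundary-layer issue hidden in the zero-extension, and the subsequence argument identifying the strong limit with $c^{\#}$) are refinements of details the paper passes over quickly, not a different method.
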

\begin{proof}
We will use the Riesz-Frechet-Kolmogorov theorem. 
As $\partial_t C^{\tau} \in L^2(\Omega_T)$, the time translation is controlled. It remains to control the space translation: 
$$ \mathcal{I}_{\xi} := \int_{0}^{T} \int_{\Omega} \abs{\Delta_{\xi} C^{\tau}}^2 \dx \dt \rightarrow 0 \text{ when $\abs{\xi} \rightarrow 0$.}$$
According to the definition of $C^{\tau}$, we get 
\begin{multline*}
\mathcal{I}_{\xi} = \int_{0}^{T} \int_{\Omega} \abs{C^{\tau}(x) - C^{\tau}(x+\xi)}^2 \dx \dt = \\
\sum \limits_{n=1}^{T/\tau} \int_{t_{n-1}}^{t_n} \int_{\Omega_{\xi}} | c^{n}(x) \frac{t-t_{n-1}}{\tau} + c^{n-1}(x) \frac{t_n-t}{\tau} - c^{n}(x+\xi) \frac{t-t_{n-1}}{\tau}- c^{n-1}(x+\xi) \frac{t_n-t}{\tau}| ^2 \dx \dt  \\
=  \sum \limits_{n=1}^{T/\tau} \int_{t_{n-1}}^{t_n} \int_{\Omega_{\xi}} \abs{\Delta_{\xi} c^n \frac{t-t_{n-1}}{\tau} + \Delta_{\xi} c^{n-1} \frac{t_n -t}{\tau}}^2 \dx \dt.
\end{multline*} 
Thus
$$ \abs{\mathcal{I}_{\xi}} \leq \sum \limits_{n=1}^{T/\tau}  \tau \Bigl(2 \norm{\Delta_{\xi} c^n}^2_{L^2(\Omega_T)} + 2 \norm{\Delta_{\xi} c^{n-1}}^2_{L^2(\Omega_T)}\Bigr).$$
By using Lemma \ref{lem:translationc}, we get that
$$\abs{\mathcal{I}_{\xi}} \leq C \abs{\xi}\Bigl(1+\norm{v}_{L^{\infty}(\Omega)}+\norm{v}^{2}_{L^{\infty}(\Omega)} \Bigr),$$ 
where $C$ in independently of $\tau$ and $h$. \\
We thus control the space translation of $c^n$ and according to Riesz-Frechet-Kolmogorov theorem, $\{ C^{\tau}, \tau >0 \}$ is compact. We thus conclude the strong convergence in $L^2(\Omega_T)$ of $C^{\tau}$ to $c^{\#}$. 
\end{proof} 
Now we have prove the strong convergence of $C^{\tau}$, we can pass to the limit. 
\begin{thm}
\label{thm:solutionfaible}
The sequence $(C^{\tau},V_C^{\tau})$ converges to the solution $(c,v_c)$ of \eqref{problemeP} when $\tau \rightarrow 0$. 
\end{thm}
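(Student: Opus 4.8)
The plan is to pass to the limit $\tau\to 0$ in the semi-discrete identities \eqref{mixte1}--\eqref{mixte2}, rewritten through the interpolants, and to identify the limit $(c^{\#},v_c^{\#})$ furnished by Lemma~\ref{lem:estimationsC} and Lemma~\ref{lem:convergenceforteC} as a weak solution of \eqref{problemeP}. First I would recast the scheme: on each $(t_{n-1},t_n]$ one has $\partial_t C^{\tau}=(c^n-c^{n-1})/\tau$, so the discrete time derivative reproduces $\langle R\psi\,\partial_t C^{\tau},w\rangle$ exactly, while the remaining terms of \eqref{mixte1}--\eqref{mixte2} involve the piecewise-constant-in-time interpolants $\overline{C}^{\tau}$ and $\overline{V_C}^{\tau}$ (taking the values $c^n,v_c^n$ on $(t_{n-1},t_n]$) together with the backward shift $\overline{C}^{\tau}_{-}$ carrying $c^{n-1}$. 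Since the jumps $c^n-c^{n-1}$ and $v_c^n-v_c^{n-1}$ are summable by \eqref{stabilite1}--\eqref{stabilite2}, these piecewise-constant interpolants differ from the linear ones $C^{\tau},V_C^{\tau}$ by $\mathcal{O}(\sqrt{\tau})$ in $L^2(\Omega_T)$; hence $\overline{V_C}^{\tau}$ shares the weak limit $v_c^{\#}$, while $\overline{C}^{\tau}$ and $\overline{C}^{\tau}_{-}$ share the \emph{strong} $L^2(\Omega_T)$ limit $c^{\#}$ of Lemma~\ref{lem:convergenceforteC}.

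Multiplying \eqref{mixte1}--\eqref{mixte2} by $\theta\in C_c^{\infty}((0,T))$ and summing over $n$ rebuilds the space-time weak formulation in terms of the interpolants; I would then pass to the limit term by term. The parabolic term converges by the weak convergence $\partial_t C^{\tau}\rightharpoonup\partial_t c^{\#}$ in $L^2(\Omega_T)$ tested against $R\psi\,w\,\theta\in L^2(\Omega_T)$; the flux term converges by the established weak convergence $\divg(S(v)^{\frac{1}{2}}\overline{V_C}^{\tau})\rightharpoonup\divg(S(v)^{\frac{1}{2}}v_c^{\#})$; and the source term $\langle p,w\rangle$ is independent of $\tau$. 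In the second identity the coefficients $S(v)^{-\frac{1}{2}}$, $S(v)^{-1}$ and $v$ are \emph{fixed} (independent of $\tau$), and $S(v)^{-1}\psi^{-1}v$ is bounded by \eqref{eq5.7.1}, so $\langle S(v)^{-\frac{1}{2}}\psi^{-1}\overline{V_C}^{\tau},u\rangle$ passes by weak convergence of $\overline{V_C}^{\tau}$, while $\langle\overline{C}^{\tau},\divg u\rangle$ and $\langle S(v)^{-1}\psi^{-1}v\,\overline{C}^{\tau},u\rangle$ pass by the convergence of $\overline{C}^{\tau}$.

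The only genuinely nonlinear term is the reaction $\langle r(\overline{C}^{\tau}_{-}),w\rangle$, for which weak convergence does not suffice. Here I would use that $\overline{C}^{\tau}_{-}\to c^{\#}$ \emph{strongly} in $L^2(\Omega_T)$ (hence a.e.\ along a subsequence), together with the continuity of $r$ and the growth bound $\abs{r(x)}\le r_+\abs{x}$, so that dominated convergence yields $r(\overline{C}^{\tau}_{-})\to r(c^{\#})$ in $L^2(\Omega_T)$. Collecting the limits, $(c^{\#},v_c^{\#})$ satisfies, for a.e.\ $t\in(0,T)$ and all $w\in L^2(\Omega)$, $u\in H(\divg;\Omega)$,
\begin{align*}
\langle R\psi\,\partial_t c^{\#},w\rangle+\langle\divg(S(v)^{\frac{1}{2}}v_c^{\#}),w\rangle+\langle r(c^{\#}),w\rangle-\langle p,w\rangle&=0,\\
\langle S(v)^{-\frac{1}{2}}\psi^{-1}v_c^{\#},u\rangle-\langle c^{\#},\divg u\rangle-\langle S(v)^{-1}\psi^{-1}v\,c^{\#},u\rangle&=0,
\end{align*}
which is exactly the weak mixed formulation of \eqref{problemeP}: the second identity encodes, up to the fixed symmetric factor $S(v)^{\frac{1}{2}}$, the flux relation $S(v)^{\frac{1}{2}}v_c^{\#}=-S(v)\psi\nabla c^{\#}+v\,c^{\#}$ together with the Neumann condition. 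Finally, since $C^{\tau}$ is bounded in $H^1(0,T;L^2(\Omega))\hookrightarrow C([0,T];L^2(\Omega))$ with $C^{\tau}(0)=c_0$ for every $\tau$, the weak continuity of the trace at $t=0$ gives $c^{\#}(0)=c_0$; setting $(c,v_c)=(c^{\#},v_c^{\#})$ concludes.

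The main obstacle is the passage to the limit in the reaction term. Because $r$ is evaluated at the shifted value $c^{n-1}$, I must first identify the strong limit of the backward-shifted interpolant $\overline{C}^{\tau}_{-}$ --- which relies on the $H^1$-in-time bound \eqref{estimdtCtau} to absorb the time shift --- before exploiting the strong $L^2$ compactness of Lemma~\ref{lem:convergenceforteC}; the subordinate technical point is to guarantee that the piecewise-constant and piecewise-linear interpolants share the same limits, which is precisely where the stability estimates \eqref{stabilite1}--\eqref{stabilite2} on the jumps enter.
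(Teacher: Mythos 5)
Your overall strategy coincides with the paper's: pass to the limit in the semi-discrete mixed formulation \eqref{mixte1}--\eqref{mixte2}, use the stability estimates \eqref{stabilite1}--\eqref{stabilite2} to control the discrepancy between the interpolants and the discrete values $c^n, v_c^n$, use the strong $L^2(\Omega_T)$ compactness of Lemma~\ref{lem:convergenceforteC} for the nonlinear reaction term, and use weak convergence for all the linear terms. Your bookkeeping differs only in inessential (and defensible) ways: the paper keeps the piecewise-linear interpolants and estimates the resulting residual terms one by one, handling the flux residual by integrating by parts onto $w \in L^2(0,T;H^1_0(\Omega))$ and the reaction residual via the bound on $r'$, whereas you transfer everything to piecewise-constant interpolants, invoke \eqref{stabilite2} to avoid the integration by parts (which incidentally lets you keep more general test functions), and treat $r$ by strong convergence plus dominated convergence. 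Your verification of the initial condition through the weak continuity of the trace in $H^1(0,T;L^2(\Omega))$ is a point the paper's proof does not address at all, so that is a welcome addition.

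There is, however, one genuine gap in your conclusion. Every convergence you use is extracted along a subsequence (both the weak limits following Lemma~\ref{lem:estimationsC} and the Riesz--Frechet--Kolmogorov compactness of Lemma~\ref{lem:convergenceforteC} only yield subsequences), so what you have actually proved is: \emph{some subsequence} of $(C^{\tau},V_C^{\tau})$ converges to \emph{some} weak solution $(c^{\#},v_c^{\#})$ of \eqref{problemeP}. The theorem asserts that the \emph{whole sequence} converges to \emph{the} solution $(c,v_c)$, which is given beforehand. Writing ``setting $(c,v_c)=(c^{\#},v_c^{\#})$ concludes'' does not close this: a priori different subsequences could produce different limits, and the limit you constructed has not been identified with the solution named in the statement. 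The missing step --- which the paper states explicitly --- is the uniqueness of the weak solution of \eqref{problemeP}: uniqueness identifies $(c^{\#},v_c^{\#})=(c,v_c)$, and then the standard argument (every subsequence of $(C^{\tau},V_C^{\tau})$ admits a further subsequence converging to the same limit $(c,v_c)$, hence the full sequence converges) finishes the proof. Without invoking uniqueness, your argument establishes a strictly weaker statement than the theorem.
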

\begin{proof}
Function $(C^{\tau},V_C^{\tau})$ satisfies
\begin{multline}
\label{3.20}
 \langle R \psi  \partial_t C^{\tau}, w \rangle +  \langle \divg(S(v)^{\frac{1}{2}} {V_C}^{\tau}), w \rangle + \langle  r(C^{\tau}), w \rangle -  \langle p, w \rangle= \\ 
 \langle \divg(S(v)^{\frac{1}{2}} ({V_C}^{\tau} -  {v_c}^n)), w  \rangle  + \langle  r(C^{\tau}) - r(c^{n-1}), w \rangle 
 \end{multline}
 and
 \begin{multline}
  \label{3.22}
 \langle  S(V)^{-\frac{1}{2}}\psi^{-1} {V_C}^{\tau}, u \rangle - \langle C^{\tau} ,\divg(u) \rangle - \langle S(V)^{-1} \psi^{-1} V C^{\tau} , u \rangle  = \\
  \langle  S(V)^{-\frac{1}{2}}\psi^{-1} ({V_C}^{\tau} - {v_c}^n), u \rangle - \langle C^{\tau} -c^n ,\divg(u) \rangle - \langle S(V)^{-1} \psi^{-1} (V C^{\tau} - V c^n) , u \rangle
\end{multline}
for all $(w,u) \in (L^2(0,T;H^1_0(\Omega)), \mathcal{S})$. 
We first consider \eqref{3.20} and thanks to Lemma  \ref{lem:convergenceforteC}, the left member converges to $\langle R \psi \partial_t c^{\#},w \rangle + \langle \divg(S(v)^{\frac{1}{2}} v_c^{\#}),w\rangle + \langle r(c^{\#}),w\rangle - \langle p,w \rangle$. Lets prove that the right member removes when $\tau \rightarrow 0$. We denoted the two terms of the right member respectively $I_1$ and $I_2$. Integrating $I_1$ by parts, which is allowed according to the choice of $w \in L^2(0,T;H^1_0(\Omega))$, we get 
\begin{align*} 
I_1 &= \sum \limits_{n=1}^{T/\tau}  \int_{t_{n-1}}^{t_n} S(v)^{\frac{1}{2}}({V_C}^{\tau} - {v_c}^{n}) \cdot \nabla w \dx \dt &&\\
&= \sum \limits_{n=1}^{T/\tau}  \int_{t_{n-1}}^{t_n} S(v)^{\frac{1}{2}}({v_c}^n \frac{t-t_{n-1}}{\tau} +{v_c}^{n-1} \frac{t_n-t}{\tau} - {v_c}^{n}) \cdot \nabla w \dx \dt &&\\
&= \sum \limits_{n=1}^{T/\tau}  \int_{t_{n-1}}^{t_n} \frac{t-t_n}{\tau} S(v)^{\frac{1}{2}}({v_c}^n - {v_c}^{n-1}) \cdot \nabla w \dx \dt.&&
\end{align*}
When $t \in [t_{n-1}, t_n[$, $ \frac{t-t_n}{\tau} < 1$. Thus, according to Cauchy-Schwarz inequality, we get
\begin{align*}
\abs{I_1} &\leq \tau^{\frac{1}{2}} \Bigl( \sum \limits_{n=1}^{T/\tau} \norm{S(v)^{\frac{1}{2}}({v_c}^n - {v_c}^{n-1})}_{L^2(\Omega)}^2  \Bigr)^{\frac{1}{2}}  \Bigl( \int_{0}^{T} \norm{\nabla w}_{L^2(\Omega)}^2  \Bigr)^{\frac{1}{2}} &&\\ 
&\leq  \tau^{\frac{1}{2}} C  \Bigl(1+\norm{v}_{L^{\infty}(\Omega)}+\norm{v}^{2}_{L^{\infty}(\Omega)} \Bigr) \norm{\nabla w}_{L^2(\Omega)} && \\
& \leq C \tau^{\frac{1}{2}}   \rightarrow 0 \text{ when $\tau \rightarrow 0$} &&
\end{align*}
thanks to l'estimate \eqref{stabilite1}. \\
We get
\begin{align*}
\abs{I_2} &= \abs{\langle r(C^{\tau})-r(c^{n-1}),w\rangle} &&\\
&\leq \Bigl(C \sum \limits_{n=1}^{T/\tau} \tau \norm{r(C^{\tau})-r(c^{n-1})}_{L^2(\Omega)}^2  \Bigr)^{\frac{1}{2}} \Bigl(\sum \limits_{n=1}^{T/\tau} \int_{t_{n-1}}^{t_n} \norm{w}_{L^2(\Omega)}^2 \dt \Bigr)^{\frac{1}{2}}.&&
\end{align*}
As $r$ is assumed derivable with a bounded derivative, we get 
\begin{align*}
\abs{I_2} &\leq \tau^{\frac{1}{2}} \norm{r'}_{L^{\infty}(\R)} \Bigl( \sum_{n=1}^{T/\tau} \norm{C^{\tau}-c^{n-1}}_{L^2(\Omega)}^2 \Bigr)^{\frac{1}{2}}  \Bigl( \sum_{n=1}^{T/\tau} \int_{t_{n-1}}^{t_n} \norm{w}_{L^2(\Omega)}^2 \dt \Bigr)^{\frac{1}{2}} &&\\
&\leq C \tau^{\frac{1}{2}}  \norm{r'}_{L^{\infty}(\R)}  \Bigl( \sum_{n=1}^{T/\tau} \norm{c^{n}-c^{n-1}}_{L^2(\Omega)}^2 \Bigr)^{\frac{1}{2}}   \norm{w}_{L^2(\Omega)}. &&
\end{align*}
From \eqref{stabilite1}, we get
\begin{align*}
\abs{I_2} &\leq C \tau^{\frac{1}{2}} \norm{r'}_{L^{\infty}(\R)} \Bigl( \tau \Bigl(1+\norm{v}_{L^{\infty}(\Omega)}+\norm{v}^{2}_{L^{\infty}(\Omega)} \Bigr) \Bigr)^{\frac{1}{2}}   \norm{w}_{L^2(\Omega)}&& \\
&\leq C \tau \norm{r'}_{L^{\infty}(\R)} \Bigl(1+\norm{v}_{L^{\infty}(\Omega)}+\norm{v}^{2}_{L^{\infty}(\Omega)} \Bigr)^{\frac{1}{2}}   \norm{w}_{L^2(\Omega)}&& \\
&\leq C \tau \rightarrow 0 \text{ when $\tau \rightarrow 0$.} &&
\end{align*}
Then, $\abs{I_2} \rightarrow 0$. 
Thus
\begin{eqnarray*}
\label{3.20bis}
\lim \limits_{\tau \rightarrow 0} \Bigl(\langle R \psi  \partial_t C^{\tau}, w \rangle +  \langle \divg(S(v)^{\frac{1}{2}} {V_C}^{\tau}), w \rangle + \langle  r(C^{\tau}), w \rangle -  \langle p, w \rangle \Bigr)=0,
 \end{eqnarray*}
 meaning 
 \begin{eqnarray}
\label{3.20ter}
 \langle R \psi  \partial_t c^{\#}, w \rangle +  \langle \divg(S(v)^{\frac{1}{2}} v_c^{\#}), w \rangle + \langle  r(c^{\#}), w \rangle -  \langle p, w \rangle=0.
 \end{eqnarray}
We now focus on \eqref{3.22}. We denoted the three first terms of the right member respectively $I_4$, $I_5$ and $I_6$.
We get
\begin{align*}
\abs{I_4} & \leq  \sum \limits_{n=1}^{T/\tau}  \int_{t_{n-1}}^{t_n} S(v)^{-\frac{1}{2}} \psi^{-1} ({V_C}^{\tau} - {v_c}^n) \cdot u \dx \dt &&\\
&\leq \sum \limits_{n=1}^{T/\tau}  \int_{t_{n-1}}^{t_n} \abs{\frac{t-t_n}{\tau} S(v)^{-\frac{1}{2}} \psi^{-1} ({v_c}^{n} - {v_c}^{n-1}) \cdot u }\dx \dt &&\\
& \leq \tau^{\frac{1}{2}} \Bigl(\sum \limits_{n=1}^{T/\tau} \norm{S(v)^{-\frac{1}{2}} \psi^{-1}({v_c}^n - {v_c}^{n-1})}_{L^2(\Omega)}^2 \Bigr)^{\frac{1}{2}} \Bigl(\int_{0}^{T} \norm{u}_{L^2(\Omega)}^2 \dt \Bigr)^{\frac{1}{2}} \rightarrow 0 \text{ when $\tau \rightarrow 0$}&&
\end{align*}
according to \eqref{stabilite1}. \\
Similarly, we find $\abs{I_5}$ and $\abs{I_6} \rightarrow 0$. \\
Thus
\begin{eqnarray*}
\label{3.24}
\lim \limits_{\tau \rightarrow 0} \Bigl( \langle  S(v)^{-\frac{1}{2}}\psi^{-1} {V_C}^{\tau}, u \rangle - \langle C^{\tau} ,\divg(u) \rangle - \langle S(v)^{-1} \psi^{-1} V C^{\tau} , u \rangle \Bigr) =0
\end{eqnarray*} 
meaning
\begin{eqnarray}
\label{3.24bis}
\langle  S(v)^{-\frac{1}{2}}\psi^{-1} v_c^{\#}, u \rangle - \langle c^{\#} ,\divg(u) \rangle - \langle S(v)^{-1} \psi^{-1} v c^{\#} , u \rangle  = 0.
\end{eqnarray} 
According to \eqref{3.20ter} and \eqref{3.24bis}, $(c^{\#},v_c^{\#})$ is a weak solution of \eqref{problemeP}. The solution of the problem being unique, we get $(c^{\#},v_c^{\#})=(c,v_c)$ and the sequence $(C^{\tau},V_C^{\tau})$ converges to this limit. 

\end{proof}
We have proved the convergence of the solution for the time discretized model. We are now going to study the fully discretized problem, in time and in space, and prove the convergence of the solution. 

\subsubsection{Fully discretization} \label{subsub:discretisationtoatle}
The problem is initialized with $c_h^0 = c_0$, with $n=1,...,N$. We find the approximation $(c_h^n,{v_c}_h^n)$ of $(c(t_n), {v_c}(t_n))$ at $t=t_n$ solution of the following problem: \\ 
Problem $\mathcal{P}_h^n$: for a given $c_h^{n-1} \in \mathcal{W}_h$, finding $(c_h^n,{v_c}_h^n) \in (\mathcal{W}_h, \mathcal{U}_h)$ satisfiying
\begin{eqnarray}
\label{mixte1bis}
\langle R \psi (c_h^n - c_h^{n-1}), w \rangle 
+ \tau \langle  \divg(S(v_h)^{\frac{1}{2}}{v_c}_h^n),  w    \rangle 
+ \tau \langle r(c_h^{n-1}),w  \rangle 
- \tau \langle p ,w  \rangle = 0, \\
\langle S(v_h)^{-\frac{1}{2}} \psi^{-1} {v_c}_h^n , u   \rangle - \langle  c_h^n , \divg(u)   \rangle - \langle S(v_h)^{-1} \psi^{-1} v_h c_h^n , u   \rangle = 0
\label{mixte2bis}
\end{eqnarray}
for all $(w,u) \in \mathcal{W}_h \times \mathcal{U}_h$. \\
The approach in order to prove the convergence of the solution is the same as the one used for the semi-discretized model. We introduce $Z_h^{\tau}$ the approximation of $Z_h^{\tau}$ by interpolation defined by
$$ Z_h^{\tau} = z_h^{n} \frac{t-t_{n-1}}{\tau} + z_h^{n-1} \frac{t_n -t}{\tau}$$ 
where $Z_h^{\tau}$ may refer to $C_h^{\tau}$ or ${V_C}_h^{\tau}$. \\
Ensures the following lemma: 
\begin{lem}
\label{lem:majorationCntau}
There exists a constant $C>0$ such that for all $\tau$ and $h$ we get the following estimation 
\begin{eqnarray}
\label{majorationCntau}
\norm{C_h^{\tau}}^2_{L^2(\Omega_T)} + \tau \norm{{V_C}_h^{\tau}}^2_{L^2(\Omega_T)}  \leq C, \\
\norm{\partial_t C_h^{\tau}}^2_{L^2(\Omega_T)}  + \norm{\divg(\psi^{-1}S(v_h)^{\frac{1}{2}} {V_C}_h^{\tau})}^2_{L^2(\Omega_T)}   \leq C\Bigl(1+\norm{v}_{L^{\infty}(\Omega)}+\norm{v}^{2}_{L^{\infty}(\Omega)} \Bigr) \leq C.
\label{majorationCntau2}
\end{eqnarray}
\end{lem}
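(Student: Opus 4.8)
The plan is to reproduce the argument of Lemma~\ref{lem:estimationsC} essentially verbatim, the only change being that all quantities carry a subscript $h$. Indeed, the interpolant $(C_h^{\tau},{V_C}_h^{\tau})$ is built from the nodal values $(c_h^n,{v_c}_h^n)$ by the same affine rule as in \eqref{Ztau}, and these nodal values solve \eqref{mixte1bis}--\eqref{mixte2bis}, which is exactly the fully discrete scheme \eqref{discretc}--\eqref{discretvc}. Consequently the a priori estimates \eqref{CFLc}, \eqref{stabilite1} and \eqref{stabilite2}, proved in Theorems~\ref{thm:existeuniquediscretetat} and~\ref{thm:stabilite}, are available for $(c_h^n,{v_c}_h^n)$, and the whole task is to transfer these discrete-in-time bounds onto the piecewise-linear-in-time interpolant.

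For \eqref{majorationCntau} I would use that on each slab $t\in(t_{n-1},t_n]$ the interpolant is a convex combination of $c_h^n$ and $c_h^{n-1}$, so that
$$ \norm{C_h^{\tau}}_{L^2(\Omega_T)}^2 \le 2\norm{c_h^n}_{L^2(\Omega_T)}^2 + 2\norm{c_h^{n-1}}_{L^2(\Omega_T)}^2, $$
and the right-hand side is bounded by \eqref{CFLc}. The same computation applied to ${V_C}_h^{\tau}$, together with the weight $\tau$ and \eqref{CFLc}, controls $\tau\norm{{V_C}_h^{\tau}}_{L^2(\Omega_T)}^2$.

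For \eqref{majorationCntau2} I would first note that $\partial_t C_h^{\tau}=(c_h^n-c_h^{n-1})/\tau$ is piecewise constant in time, whence
$$ \int_0^T \norm{\partial_t C_h^{\tau}}_{L^2(\Omega)}^2 \dt = \sum_{n=1}^{T/\tau} \frac{1}{\tau}\norm{c_h^n-c_h^{n-1}}_{L^2(\Omega)}^2, $$
which is exactly the first term controlled in \eqref{stabilite1}. For the dispersive term I would expand $\divg(S(v_h)^{\frac12}{V_C}_h^{\tau})$ as the affine interpolation of $\divg(S(v_h)^{\frac12}{v_c}_h^{n-1})$ and $\divg(S(v_h)^{\frac12}{v_c}_h^{n})$ and bound its $L^2(\Omega_T)$ norm by the discrete sum controlled in \eqref{stabilite2}.

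The point that deserves attention, rather than a genuine obstacle, is that the right-hand sides of \eqref{stabilite1}--\eqref{stabilite2} carry the factor $1+\norm{v_h}_{L^{\infty}(\Omega)}+\norm{v_h}_{L^{\infty}(\Omega)}^2$, which a priori could depend on $h$. To reach constants independent of $h$, and hence the final bound $\le C$, I would invoke the uniform estimate $\norm{v_h}_{(L^{\infty}(\Omega_T))^N}\le C$ from Proposition~\ref{prop:convBernardi}, which holds under the standing assumptions of this section; this turns every occurrence of $\norm{v_h}_{L^{\infty}}$ into a fixed constant and closes the estimate.
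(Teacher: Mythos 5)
Your proposal is correct and follows essentially the same route as the paper: the paper's own proof of Lemma~\ref{lem:majorationCntau} is literally the one-line remark that it is ``similar to Lemma~\ref{lem:estimationsC},'' and what you write out is exactly that transfer — convex-combination bounds via \eqref{CFLc}, the piecewise-constant identity for $\partial_t C_h^{\tau}$ via \eqref{stabilite1}, and the affine expansion of the divergence term via \eqref{stabilite2}. Your explicit appeal to the uniform bound $\norm{v_h}_{(L^{\infty}(\Omega_T))^N}\leq C$ from Proposition~\ref{prop:convBernardi} to remove the $h$-dependence of the constants is precisely the point the paper settles in the standing assumptions at the start of Section~5, so nothing is missing.
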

\begin{proof}
The proof is similar to those of Lemma \ref{lem:estimationsC}.
\end{proof}
\begin{lem}
\label{lem:convergencefaibleChtau}
There exists a sub-sequence $\tau \rightarrow 0$ and functions $c^{\#}$ and $v_c^{\#}$ such that: 
\begin{align*}
C_h^{\tau}  & \rightharpoonup c^{\#} \text{ weakly in $L^2(\Omega_T)$,} &&\\
\partial_t C_h^{\tau} & \rightharpoonup \partial_t c^{\#} \text{ weakly in $L^2(\Omega_T)$,} &&\\
{V_C}_h^{\tau}  & \rightharpoonup v_c^{\#} \text{ weakly in $L^2(0,T;L^2(\Omega)^N)$,} &&\\
\divg(S(v_h)^{\frac{1}{2}} {V_C}_h^{\tau}) & \rightharpoonup \divg(S(v)^{\frac{1}{2}} v_c^{\#}) \text{ weakly in $L^2(0,T;L^2(\Omega)^N)$.}&&
\end{align*}
\end{lem}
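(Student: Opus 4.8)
The plan is to reproduce, for the fully discretized quantities, the weak-compactness argument already carried out in the semi-discrete setting after Lemma~\ref{lem:estimationsC}, the only genuinely new feature being that the matrix field $S(v_h)^{1/2}$ now depends on $h$ through $v_h$. First I would collect the uniform bounds. Estimate \eqref{majorationCntau} bounds $C_h^\tau$ in $L^2(\Omega_T)$ and \eqref{majorationCntau2} bounds $\partial_t C_h^\tau$ in $L^2(\Omega_T)$, so $(C_h^\tau)_\tau$ is bounded in the Hilbert space $\mathcal{W}=H^1(0,T;L^2(\Omega))$; reflexivity (Banach--Alaoglu) then gives a subsequence, not relabelled, with $C_h^\tau\rightharpoonup c^\#$ weakly in $L^2(\Omega_T)$ and $\partial_t C_h^\tau\rightharpoonup g$ weakly in $L^2(\Omega_T)$ for some $g$. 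That $g=\partial_t c^\#$ is automatic: for $\varphi\in C_c^\infty(\Omega_T)$ one has $\int_{\Omega_T}\partial_t C_h^\tau\,\varphi=-\int_{\Omega_T}C_h^\tau\,\partial_t\varphi$, and letting $\tau\to0$ in both integrals (weak convergence of $C_h^\tau$ against the fixed $\partial_t\varphi$, boundedness of $\partial_t C_h^\tau$) identifies $g$ with the distributional time derivative of $c^\#$.

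For the velocity I would first secure a \emph{uniform} $L^2(\Omega_T)$ bound, which does not come from the factor $\tau\norm{{V_C}_h^\tau}^2$ in \eqref{majorationCntau} but from the supremum-in-time control in \eqref{stabilite1}: under the $L^\infty$-bound $\norm{v_h}_{L^\infty(\Omega_T)}\le C$ of Proposition~\ref{prop:convBernardi}, \eqref{stabilite1} gives $\norm{{v_c}_h^k}_{L^2(\Omega)}\le C$ uniformly in $k,\tau,h$, and since ${V_C}_h^\tau(t)$ is a convex combination of two consecutive slices we obtain $\norm{{V_C}_h^\tau}_{L^2(\Omega_T)}\le C$. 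A further subsequence then yields ${V_C}_h^\tau\rightharpoonup v_c^\#$ weakly in $L^2(0,T;L^2(\Omega)^N)$. Likewise \eqref{stabilite2} (equivalently \eqref{majorationCntau2}) bounds $\divg(S(v_h)^{1/2}{V_C}_h^\tau)$ in $L^2(\Omega_T)$, so along a further subsequence it converges weakly to some $\zeta$.

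The step I expect to be the main obstacle is the identification $\zeta=\divg(S(v)^{1/2}v_c^\#)$, since it combines the weakly convergent factor ${V_C}_h^\tau$ with the $h$-dependent factor $S(v_h)^{1/2}$ and then applies the divergence. I would proceed as follows. From the strong convergence $v_h\to v$ in $L^2(\Omega_T)$ (Proposition~\ref{prop:convBernardi}) a subsequence converges a.e., so by continuity of $\xi\mapsto S(\xi)^{1/2}$ one has $S(v_h)^{1/2}\to S(v)^{1/2}$ a.e.; combined with the pointwise bound $\abs{S(v_h)^{1/2}\xi}\le C(1+\abs{v_h}^{1/2})\abs{\xi}\le C\abs{\xi}$ from \eqref{eq5.7} and $\norm{v_h}_{L^\infty}\le C$, dominated convergence gives $S(v_h)^{1/2}\nabla w\to S(v)^{1/2}\nabla w$ strongly in $L^2(\Omega_T)$ for every fixed $w\in L^2(0,T;H^1_0(\Omega))$. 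Integrating by parts and using the symmetry of $S^{1/2}$, $\int_{\Omega_T}\divg(S(v_h)^{1/2}{V_C}_h^\tau)\,w=-\int_{\Omega_T}{V_C}_h^\tau\cdot S(v_h)^{1/2}\nabla w$, and the right-hand side is a strong$\times$weak pairing that converges to $-\int_{\Omega_T}v_c^\#\cdot S(v)^{1/2}\nabla w=\int_{\Omega_T}\divg(S(v)^{1/2}v_c^\#)\,w$. Hence $\zeta=\divg(S(v)^{1/2}v_c^\#)$, which in particular shows $S(v)^{1/2}v_c^\#\in L^2(0,T;H(\divg;\Omega))$. The delicate points to verify are that the $L^\infty$-control of $v_h$ is exactly what makes the strong$\times$weak product pass to the limit, and that the divergence commutes with the weak limit because it is a closed operator.
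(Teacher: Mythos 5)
Your proposal is correct and follows essentially the same route as the paper, whose proof of this lemma is a one-line reference to the semi-discrete argument together with the remark that the strong convergence $v_h \rightarrow v$ in $L^2(\Omega_T)$ handles the last convergence; your dominated-convergence and integration-by-parts identification of $\divg(S(v)^{\frac{1}{2}} v_c^{\#})$ is exactly the intended use of that remark. Your observation that the uniform $L^2(\Omega_T)$ bound on ${V_C}_h^{\tau}$ must be drawn from the sup-in-time estimate \eqref{stabilite1} (combined with the $L^{\infty}$ bound on $v_h$ from Proposition \ref{prop:convBernardi}) rather than from the $\tau$-weighted bound in \eqref{majorationCntau} is a correct and welcome sharpening of a point the paper leaves implicit.
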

\begin{proof}
The proof is similar to those in the semi-discrete case. For the last convergence, we use that $v_h \rightarrow v$ strongly in $L^2(\Omega_T)$. 
\end{proof} 
We denoted E the set of triangles bounds $\mathcal{T}_h$. Moreover we get $E = E_{int} \cup E_{ext}$ with $E_{int} = E \backslash E_{ext}$. We use the following notation: 
$$ \abs{T} \text{ area of $T \in \mathcal{T}_h$, $x_i$ the center of the circumscribed cercle to $T$,} $$
$$ \abs{\mathcal{T}_h}=\max \limits_{T_i \in \mathcal{T}_h} \abs{T_i}, $$
$$ \ell_{ij} \text{ boundary between $T_i$ and $T_j$, $d_{ij}$ the distance from $x_i$ to $\ell_{ij}$,  $\sigma_{ij}=\frac{\abs{\ell_{ij}}}{d_{ij}}$.} $$
We defined the following inner products for all $c_h^n$, $w_h^n \in \mathcal{U}_h$ : 
\begin{eqnarray}
\label{innerproduct}
(c_h^n,w_h^n)_h := \sum \limits_{T_i \in \mathcal{T}_h} \abs{T_i} c_{h,i}^n w_{h,i}^n \text{, } \hspace{3mm}
(c_h^n,w_h^n)_{1,h} := \sum \limits_{l_{ij} \in E} \abs{\sigma_{ij}} (c_{h,i}^n -c_{h,j}^h)(w_{h,i}^n - w_{h,j}^n).
\end{eqnarray}
The discrete inner product gives the discrete norm $H_0^1$: 
\begin{eqnarray}
\label{normeH10discrete}
\norm{c_h^n}^2_{1,h}= \sum \limits_{l_{ij} \in E} \abs{\sigma_{ij}} (c_{h,i}^n - c_{h,j}^n)^2.
\end{eqnarray}

\begin{lem}
\label{lem:classique}
Let $\Omega$ an open set of $\R^N$, $N=2$ or $3$, and $\mathcal{T}_h$ a mesh. For a $c$ defined in $\Omega$ ans extended in $\bar{c}$ by 0 outside of $\Omega$, we get
\begin{eqnarray}
\label{majorationclassique}
\norm{\Delta_{\xi}\bar{c}}^2_{L^2(\R^N)} \leq   \norm{c}^2_{1,h} \abs{\xi} (\abs{\xi} + C \abs{\mathcal{T}_h})  \text{ for all $\xi \in \R^N$.}
\end{eqnarray}
\end{lem}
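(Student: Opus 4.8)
The plan is to adapt the classical finite-volume estimate on translates of piecewise-constant functions. Since $c$ is constant on each simplex, the translate difference $\Delta_{\xi}\bar c(x)=\bar c(x)-\bar c(x+\xi)$ can be nonzero only when the segment $[x,x+\xi]$ leaves one cell and enters another, or crosses $\partial\Omega$ (where the extension by $0$ behaves like a neighbouring value $0$). For an edge $\ell_{ij}\in E$ I would introduce the crossing indicator $\chi_{ij}(x,\xi)$, equal to $1$ when $[x,x+\xi]$ meets $\ell_{ij}$ and $0$ otherwise, with the convention $c_{h,j}=0$ for boundary edges $\ell_{ij}\in E_{ext}$. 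Telescoping the jumps picked up along the segment, the first step is the pointwise domination
\[
\abs{\Delta_{\xi}\bar c(x)}\;\le\;\sum_{\ell_{ij}\in E}\abs{c_{h,i}-c_{h,j}}\,\chi_{ij}(x,\xi)\qquad\text{for a.e. }x\in\R^N.
\]

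Next I would square this bound and apply a weighted Cauchy--Schwarz inequality over the edges, writing each summand as $\abs{c_{h,i}-c_{h,j}}\chi_{ij}=\big(\abs{c_{h,i}-c_{h,j}}\,w_{ij}^{-1/2}\chi_{ij}^{1/2}\big)\big(w_{ij}^{1/2}\chi_{ij}^{1/2}\big)$ with the weight $w_{ij}:=d_{ij}\,\abs{\xi\cdot n_{ij}}$, where $n_{ij}$ is the unit normal to $\ell_{ij}$. This choice is dictated by the swept-volume identity $\int_{\R^N}\chi_{ij}(x,\xi)\dx=\abs{\ell_{ij}}\,\abs{\xi\cdot n_{ij}}$ (the translated edge sweeps a prism of base $\abs{\ell_{ij}}$ and normal height $\abs{\xi\cdot n_{ij}}$). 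Integrating the first factor in $x$ then produces exactly $\frac{\abs{\ell_{ij}}}{d_{ij}}\abs{c_{h,i}-c_{h,j}}^2=\sigma_{ij}\abs{c_{h,i}-c_{h,j}}^2$, the summand of $\norm{c}_{1,h}^2$. Hence, provided the second factor $\sum_{\ell_{ij}}w_{ij}\,\chi_{ij}(x,\xi)$ admits a bound $\abs{\xi}(\abs{\xi}+C\abs{\mathcal{T}_h})$ uniform in $x$, I can pull it out of the integral and obtain \eqref{majorationclassique} immediately.

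The main obstacle is precisely this uniform pointwise control of $\sum_{\ell_{ij}\in E}d_{ij}\abs{\xi\cdot n_{ij}}\,\chi_{ij}(x,\xi)$. Here I would order the edges $\ell_1,\dots,\ell_m$ crossed by $[x,x+\xi]$ and the cells $T_0,\dots,T_m$ they separate, and use that on an admissible circumcenter mesh the step joining two consecutive circumcenters is orthogonal to the common edge. Setting $e=\xi/\abs{\xi}$, each term $d_{ij}\abs{\xi\cdot n_{ij}}$ then equals $\abs{\xi}$ times the projection onto $e$ of the corresponding circumcenter-to-face normal segment, so the sum telescopes to $\abs{\xi}$ times the projected distance between the circumcenters of the first and last crossed cells; that projected distance is at most $\abs{\xi}$ plus two boundary corrections from $T_0$ and $T_m$, each of order the mesh size, giving the factor $\abs{\xi}+C\abs{\mathcal{T}_h}$. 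The delicate point to verify is the (up to a mesh-dependent constant) monotonicity of the projected circumcenters along $e$, so that the telescoping upper bound is legitimate; the quasi-uniformity of $\mathcal{T}_h$ is what keeps both the boundary contributions and any defect of monotonicity uniformly $O(\abs{\mathcal{T}_h})$, which closes the estimate.
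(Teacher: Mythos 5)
The paper gives no argument of its own for this lemma; its ``proof'' is a citation of Lemma 4 in Eymard--Gallou\"et--Herbin \cite{Gallouet}, and your proposal is precisely the classical argument of that reference: pointwise domination of $\Delta_{\xi}\bar c$ by the jumps across crossed edges, weighted Cauchy--Schwarz with the weight $d_{ij}\abs{\xi\cdot n_{ij}}$ chosen so that the swept-volume identity $\int_{\R^N}\chi_{ij}(x,\xi)\dx=\abs{\ell_{ij}}\,\abs{\xi\cdot n_{ij}}$ reproduces $\norm{c}_{1,h}^2$, and the telescoping bound on $\sum_{\ell_{ij}}d_{ij}\abs{\xi\cdot n_{ij}}\chi_{ij}$ using orthogonality of the circumcenter segments to the common faces. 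Your reconstruction is correct and even flags the right delicate point (the admissibility/Delaunay property that makes every crossing contribute a positive projected increment, plus mesh regularity to control the endpoint and boundary corrections), so it matches the intended proof.
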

\begin{proof} 
We refer to the proof of lemma 4 in Eymard et al. \cite{Gallouet}.
\end{proof}

\begin{lem}
\label{lem:majornormH10}
For a sequence $c_h^n$, we get the following inequality: 
\begin{eqnarray}
\label{majornormH10}
\norm{c_h^n}_{1,h} \leq C \Bigr( \norm{{v_c}_h^n}_{L^2(\Omega)} + \norm{c_h^n}_{L^2(\Omega)} \Bigl)
\end{eqnarray}
with $C$ independently of $h$ and $n$.
\end{lem}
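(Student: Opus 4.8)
The plan is to prove this discrete $H^1_0$ estimate by a duality argument built on the discrete mixed relation \eqref{mixte2bis}, exploiting the structure of the lowest-order Raviart--Thomas space $\mathcal{U}_h$. The key device is to exhibit a test function $u_h \in \mathcal{U}_h$ (respecting the chosen boundary condition) whose constant normal flux across each interior edge $\ell_{ij}$ equals the discrete normal derivative $(c_{h,i}^n - c_{h,j}^n)/d_{ij}$. Since $c_h^n$ is piecewise constant, the element-wise divergence theorem collapses the volume pairing onto the edges:
\begin{eqnarray*}
\langle c_h^n, \divg(u_h) \rangle = \sum_{T_i \in \mathcal{T}_h} c_{h,i}^n \int_{\partial T_i} u_h \cdot \n \ds = \sum_{\ell_{ij} \in E} \abs{\ell_{ij}} \, (u_h)_{ij} \, (c_{h,i}^n - c_{h,j}^n) = \norm{c_h^n}^2_{1,h},
\end{eqnarray*}
the last equality because each interior edge then contributes exactly $\sigma_{ij}(c_{h,i}^n - c_{h,j}^n)^2$.

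Next I would control this test function in $L^2$. By the Piola scaling of $RT_0$ on a shape-regular mesh one has, element by element, $\norm{u_h}^2_{L^2(T_i)} \leq C \sum_{\ell_{ij} \subset \partial T_i} \abs{\ell_{ij}} d_{ij} (u_h)_{ij}^2$; inserting $(u_h)_{ij} = (c_{h,i}^n - c_{h,j}^n)/d_{ij}$ turns the summand into $\sigma_{ij}(c_{h,i}^n - c_{h,j}^n)^2$, so that $\norm{u_h}_{L^2(\Omega)} \leq C \norm{c_h^n}_{1,h}$ with $C$ depending only on the regularity of $\mathcal{T}_h$. I would then rearrange \eqref{mixte2bis} with this $u_h$ to obtain
\begin{eqnarray*}
\norm{c_h^n}^2_{1,h} = \langle S(v_h)^{-\frac{1}{2}} \psi^{-1} {v_c}_h^n, u_h \rangle - \langle S(v_h)^{-1} \psi^{-1} v_h c_h^n, u_h \rangle,
\end{eqnarray*}
and estimate the right-hand side by Cauchy--Schwarz. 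The lower bound $S(v_h)\xi\cdot\xi \geq S_m\abs{\xi}^2$ from \eqref{eq5.6bis} gives $\abs{S(v_h)^{-\frac{1}{2}}} \leq S_m^{-1/2}$, bounding the first term by $C\norm{{v_c}_h^n}_{L^2(\Omega)}\norm{u_h}_{L^2(\Omega)}$; estimate \eqref{eq5.7.1} gives $\abs{S(v_h)^{-1} v_h} \leq M_+$, bounding the second by $C\norm{c_h^n}_{L^2(\Omega)}\norm{u_h}_{L^2(\Omega)}$, the factors $\psi^{-1}$ being controlled by $\psi_-^{-1}$. Combining with the geometric bound yields
\begin{eqnarray*}
\norm{c_h^n}^2_{1,h} \leq C \Bigl( \norm{{v_c}_h^n}_{L^2(\Omega)} + \norm{c_h^n}_{L^2(\Omega)} \Bigr) \norm{c_h^n}_{1,h},
\end{eqnarray*}
and dividing by $\norm{c_h^n}_{1,h}$ (the case $\norm{c_h^n}_{1,h}=0$ being trivial) gives \eqref{majornormH10}.

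The step I expect to be the main obstacle is the geometric bound $\norm{u_h}_{L^2(\Omega)} \leq C \norm{c_h^n}_{1,h}$: it is precisely here that shape-regularity of the decomposition must be invoked to keep $C$ uniform in $h$ and $n$, and one must verify the $RT_0$ scaling uniformly over all elements while correctly accounting for the exterior edges according to the boundary condition (for the Neumann problem the boundary fluxes vanish, so only interior edges enter). The remaining ingredients are the standard duality identity and Cauchy--Schwarz, with the coercivity \eqref{eq5.6bis} and the bound \eqref{eq5.7.1} supplying all the analytic control.
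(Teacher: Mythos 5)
Your proof is correct, and it follows the same duality skeleton as the paper's proof --- test \eqref{mixte2bis} with a discrete vector field whose divergence pairs against the piecewise-constant $c_h^n$ to produce $\norm{c_h^n}^2_{1,h}$, bound that field in $L^2(\Omega)$ by $\norm{c_h^n}_{1,h}$, then conclude by Cauchy--Schwarz and division --- but your realization of the key step is genuinely different, and in fact more robust. The paper first defines the piecewise-constant function $f_h^n$ by $\abs{T_i}\,{f_h^n}_{|T_i}=\sum_{\ell_{ij}}\sigma_{ij}(c_{h,i}^n-c_{h,j}^n)$, so that $(f_h^n,c_h^n)=\norm{c_h^n}^2_{1,h}$, and then invokes the existence of $\beta_h\in U_h$ with $\divg\beta_h=f_h^n$, $\beta_h=0$ on $\Gamma$, together with the stability chain $\norm{\beta_h}_{L^2(\Omega)}\leq C\norm{f_h^n}_{L^2(\Omega)}\leq C\norm{c_h^n}_{1,h}$. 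You instead construct the test function explicitly as the lowest-order Raviart--Thomas field with normal flux $(c_{h,i}^n-c_{h,j}^n)/d_{ij}$ on interior edges and zero flux on boundary edges; this field has divergence exactly $f_h^n$, so it is a concrete realization of the paper's $\beta_h$, and its $L^2$ norm is controlled \emph{locally} by the shape-regularity scaling $\norm{u_h}^2_{L^2(T_i)}\leq C\sum_{\ell_{ij}\subset\partial T_i}\abs{\ell_{ij}}\,d_{ij}\,(u_h)_{ij}^2$, which after substitution gives $\norm{u_h}_{L^2(\Omega)}\leq C\norm{c_h^n}_{1,h}$ directly. What your route buys is precisely that the $L^2$ bound never passes through $\norm{f_h^n}_{L^2(\Omega)}$: the paper's intermediate claim that $\norm{f_h^n}_{L^2(\Omega)}$ is controlled by $\norm{c_h^n}_{1,h}$ is not uniform in $h$ (the definition of $f_h^n$ carries a factor $\abs{T_i}^{-1}$, and an oscillating, checkerboard-type $c_h^n$ shows the ratio degrades like $h^{-1}$), whereas your local Piola bound yields the $h$- and $n$-independent constant the lemma actually asserts. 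Two details are worth making explicit in your write-up: single-valuedness of the prescribed flux requires a per-edge symmetric choice of $d_{ij}$ (e.g.\ the distance between the circumcenters $x_i$ and $x_j$), so that $(u_h)_{ij}=-(u_h)_{ji}$ matches the antisymmetry of $c_{h,i}^n-c_{h,j}^n$; and the comparability $d_{ij}\sim h_{T_i}$ used in the scaling bound requires the mesh admissibility/quasi-uniformity that the paper also assumes (cf.\ the hypotheses of Theorem \ref{thm:existeuniquediscretetat}). Finally, your use of \eqref{eq5.7.1} to bound the convective term, in place of the paper's factor $S_m^{-1}\norm{v_h}_{L^{\infty}(\Omega)}$, is legitimate and even makes the constant independent of $\norm{v_h}_{L^{\infty}(\Omega)}$.
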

\begin{proof}
We defined$f_h^n$ by given its value on every element $T_i \in \mathcal{T}_h$ : 
\begin{eqnarray}
\label{Tifnh} \abs{T_i} {f_h^n}_{|{T_i}} := \sum \limits_{\ell_{ij}} \frac{\abs{\ell_{ij}}}{d_{ij}} (c_{h,i}^n - c_{h,j}^n).
\end{eqnarray}
According to the definition of $\norm{.}^2_{1,h}$, and because $c_h^n$ is constant by mesh: 
\begin{multline}
\label{fnhchn} (f_h^n,c_h^n) = \sum \limits_{T_i \in \mathcal{T}_h} \abs{T_i} {f_h^n}_{|{T_i}} {c_h^n}_{|{T_i}}=\sum \limits_{T_i \in \mathcal{T}_h} \Bigl( \sum \limits_{\ell_{ij}} \frac{\abs{\ell_{ij}}}{d_{ij}} (c_{h,i}^n - c_{h,j}^n) \Bigr) c_{h,i}^n \\
= \sum \limits_{\ell_{ij} \in E} \frac{\abs{\ell_{ij}}}{d_{ij}} \abs{c_{h,i}^n - c_{h,j}^n}^2 = \norm{c_h^n}^2_{1,h}.
\end{multline}
We see that: 
\begin{align*}
\sum \limits_{T_i \in \mathcal{T}_h} \Bigl( \sum \limits_{\ell_{ij}} \frac{\abs{\ell_{ij}}}{d_{ij}} (c_{h,i}^n - c_{h,j}^n) \Bigr) c_{h,i}^n &= 
\frac{\abs{\ell_{12}}}{d_{12}}(c_{h,1}^n - c_{h,2}^n) c_{h,1}^n + \frac{\abs{\ell_{13}}}{d_{13}}(c_{h,1}^n - c_{h,3}^n) c_{h,1}^n &&\\
&+ \frac{\abs{\ell_{12}}}{d_{12}}(c_{h,2}^n - c_{h,1}^n) c_{h,2}^n + \frac{\abs{\ell_{24}}}{d_{24}}(c_{h,2}^n - c_{h,4}^n) c_{h,2}^n &&\\
&+ \frac{\abs{\ell_{13}}}{d_{13}}(c_{h,3}^n - c_{h,1}^n) c_{h,3}^n + \frac{\abs{\ell_{34}}}{d_{34}}(c_{h,3}^n - c_{h,4}^n) c_{h,3}^n &&\\
&+ \frac{\abs{\ell_{24}}}{d_{24}}(c_{h,4}^n - c_{h,2}^n) c_{h,4}^n + \frac{\abs{\ell_{34}}}{d_{34}}(c_{h,4}^n - c_{h,3}^n) c_{h,4}^n &&\\
&= \sum \limits_{\ell_{ij} \in E} \frac{\abs{\ell_{ij}}}{d_{ij}} (c_{h,i}^n - c_{h,j}^n)^2.&&
\end{align*}
Moreover, thanks to Cauchy-Schwarz inequality: 
$$ \norm{f_h^n}^2_{L^2(\Omega)} = \sum \limits_{i} \abs{T_i \in \mathcal{T}_h} \abs{{f_h^n}_{|{T_i}} }^2 \leq \Bigl(\sum \limits_{\ell_{ij} \in E} \frac{\abs{\ell_{ij}}}{d_{ij}} (c_{h,i}^n - c_{h,j}^n)^2 \frac{1}{\abs{T_i}} \Bigr) \Bigl( \sum \limits_{\ell_{ij}} \frac{\abs{\ell_{ij}}}{d_{ij}}\Bigr)$$
which implies that $\abs{f_h^n} \leq \abs{c_h^n}_{1,h}$. Furthermore, as $f_h^n \in L^2(\Omega)$, there exists $\beta_h \in U_h$ such that
\begin{eqnarray}
\label{4.14}
\divg{\beta_h} = f_h^n \text{ in $\Omega$} \\
\beta_h = 0 \text{ on $\Gamma$.}
\label{4.15}
\end{eqnarray}
According to the bounds of $f_h^n$, we also have that $\norm{\beta_h}_{L^2(\Omega)} \leq C \norm{f_h^n}_{L^2(\Omega)} \leq C \norm{c_h^n}_{1,h}$. \\
With \eqref{4.14} in \eqref{fnhchn}, we find that $(c_h^n, \divg(\beta_h))= (c_h^n, f_h^n) = \norm{c_h^n}^2_{1,h}$. We chose the test function $u=\beta_h$ in \eqref{mixte2bis} and we get
\begin{multline*}
 \norm{c_h^n}^2_{1,h} = (c_h^n,\divg(\beta_h )) = (S(v_h)^{-\frac{1}{2}}\psi^{-1} {v_c}_h^n, \beta_h) - (S(v_h)^{-1}\psi^{-1} v_h c_h^n, \beta_h) \\
 \leq \psi_{-}^{-1} S_m^{-\frac{1}{2}} \norm{{v_c}_h^n}_{L^2(\Omega)} \norm{\beta_h}_{L^2(\Omega)} + \psi_{-}^{-1} S_m^{-1} \norm{v_h}_{L^{\infty}(\Omega)} \norm{c_h^n}_{L^2(\Omega)} \norm{\beta_h}_{L^2(\Omega)} \\
 \leq  C \norm{{v_c}_h^n}_{L^2(\Omega)} \norm{c_h^n}_{1,h} + C \norm{c_h^n}_{L^2(\Omega)} \norm{c_h^n}_{1,h}.
 \end{multline*}
Thus
 $$ \norm{c_h^n}_{1,h}  \leq C \Bigl(   \norm{{v_c}_h^n}_{L^2(\Omega)} +  \norm{c_h^n}_{L^2(\Omega)}   \Bigr).  $$
\end{proof}

\begin{lem}
\label{lem:Ccvfort}
There exists a subsequence of $C^{\tau}_h$ which strongly converge in $L^2(0,T;L^2(\Omega))$ when $(\tau,h) \rightarrow (0,0)$. 
\end{lem}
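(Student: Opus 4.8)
The plan is to establish relative compactness of the family $\{C_h^{\tau}\}$ in $L^2(\Omega_T)$ by means of the Riesz-Fr\'echet-Kolmogorov theorem, in exact analogy with the semi-discrete argument of Lemma \ref{lem:convergenceforteC}, the only difference being that the space-translation estimate now rests on the discrete tools of Lemmas \ref{lem:classique} and \ref{lem:majornormH10} rather than on the elliptic regularity trick. First I would record the two easy ingredients. The uniform $L^2(\Omega_T)$ bound on $C_h^{\tau}$ is furnished by \eqref{majorationCntau}. The control of time translations follows from the uniform bound on $\partial_t C_h^{\tau}$ in \eqref{majorationCntau2}: since $C_h^{\tau}$ is continuous and piecewise affine in time, $\norm{C_h^{\tau}(\cdot,\cdot+s)-C_h^{\tau}}_{L^2(\Omega_T)} \le \abs{s}^{\frac{1}{2}} \norm{\partial_t C_h^{\tau}}_{L^2(\Omega_T)}$, which tends to $0$ as $s\to 0$ uniformly in $(\tau,h)$.

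The heart of the proof is the space-translation estimate, to be obtained uniformly in both parameters. For each $n$, applying Lemma \ref{lem:classique} to $c_h^n$ (extended by zero outside $\Omega$) and then the discrete $H^1_0$ bound \eqref{majornormH10} of Lemma \ref{lem:majornormH10}, I would get
\begin{equation*}
\norm{\Delta_{\xi}\bar{c}_h^n}^2_{L^2(\R^N)} \le \norm{c_h^n}^2_{1,h}\,\abs{\xi}\bigl(\abs{\xi}+C\abs{\mathcal{T}_h}\bigr) \le C\,\abs{\xi}\bigl(\abs{\xi}+C\abs{\mathcal{T}_h}\bigr)\Bigl(\norm{{v_c}_h^n}^2_{L^2(\Omega)}+\norm{c_h^n}^2_{L^2(\Omega)}\Bigr).
\end{equation*}
Multiplying by $\tau$ and summing over $n=1,\dots,T/\tau$, I would invoke \eqref{CFLc} for $\norm{c_h^n}_{L^2(\Omega)}\le C$ and \eqref{stabilite1} for $\sup_n\norm{{v_c}_h^n}_{L^2(\Omega)}\le C$ (both valid here under the standing $L^{\infty}$ velocity assumption of this section), so that $\tau\sum_n\bigl(\norm{{v_c}_h^n}^2_{L^2(\Omega)}+\norm{c_h^n}^2_{L^2(\Omega)}\bigr)\le C$ uniformly in $\tau$ and $h$. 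This yields
\begin{equation*}
\tau\sum_{n=1}^{T/\tau}\norm{\Delta_{\xi}c_h^n}^2_{L^2(\Omega)} \le C\,\abs{\xi}\bigl(\abs{\xi}+C\abs{\mathcal{T}_h}\bigr).
\end{equation*}

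To transfer this to the interpolant, I would repeat the computation of Lemma \ref{lem:convergenceforteC}: writing $\mathcal{I}_{\xi}:=\int_0^T\int_{\Omega}\abs{\Delta_{\xi}C_h^{\tau}}^2\dx\dt$ and using the piecewise-affine structure of $C_h^{\tau}$ gives $\mathcal{I}_{\xi}\le \sum_n\tau\bigl(2\norm{\Delta_{\xi}c_h^n}^2_{L^2(\Omega)}+2\norm{\Delta_{\xi}c_h^{n-1}}^2_{L^2(\Omega)}\bigr)\le C\,\abs{\xi}\bigl(\abs{\xi}+C\abs{\mathcal{T}_h}\bigr)$. Since $\abs{\mathcal{T}_h}$ stays bounded (and in fact $\abs{\mathcal{T}_h}\to0$ as $h\to0$), the right-hand side is dominated by $C\abs{\xi}\bigl(\abs{\xi}+C'\bigr)$, hence the space translations vanish as $\abs{\xi}\to0$ uniformly in $(\tau,h)$. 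Combining the uniform $L^2$ bound with the uniform control of both time and space translations, the Riesz-Fr\'echet-Kolmogorov theorem gives that $\{C_h^{\tau}\}$ is relatively compact in $L^2(\Omega_T)$; I then extract a strongly convergent subsequence, whose limit must coincide with the weak limit $c^{\#}$ of Lemma \ref{lem:convergencefaibleChtau}. The delicate point is keeping the translation estimate uniform in the two parameters at once: the mesh factor $\abs{\mathcal{T}_h}$ appearing in Lemma \ref{lem:classique} must be absorbed using only its boundedness, and the summation over $n$ must be closed with the $\tau$-weighted bounds \eqref{CFLc}-\eqref{stabilite1} rather than with pointwise-in-$n$ estimates on ${v_c}_h^n$, which would only give a bound of order $1/\tau$ and destroy the uniformity.
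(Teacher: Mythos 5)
Your proposal is correct and follows essentially the same route as the paper: control of time translations via the uniform bound on $\partial_t C_h^{\tau}$, control of space translations by combining Lemma \ref{lem:classique} with the discrete $H^1_0$ bound of Lemma \ref{lem:majornormH10} and closing the $\tau$-weighted sum with \eqref{CFLc} and \eqref{stabilite1}, then Riesz-Fr\'echet-Kolmogorov. Your write-up is in fact slightly more careful than the paper's at two points the paper leaves implicit, namely the explicit $\abs{s}^{\frac{1}{2}}$ time-translation estimate and the transfer of the translation bound from the sequence $c_h^n$ to the piecewise-affine interpolant $C_h^{\tau}$.
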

\begin{proof}
As $\partial_t C^{\tau}_h \in L^2$, the time translation is controlled. It remains to consider the space translation. Taking \eqref{majornormH10} and adding for $n=1,...,T/\tau$, we get: 
\begin{multline}
\label{4.16}
\tau \sum \limits_{n=1}^{T/\tau} \norm{c_h^n}^2_{1,h} \leq C \tau \sum \limits_{n=1}^{T/\tau} \Bigl( \norm{{v_c}_h^n}^2 + \norm{c_h^n}^2 \Bigr) 
\leq C \Bigl(1+\norm{v}_{L^{\infty}(\Omega)}+\norm{v}^{2}_{L^{\infty}(\Omega)} \Bigr) \leq C
\end{multline}
according to \eqref{CFLc} and \eqref{stabilite1}. We extend $c_h^n$ by 0 outside of $\Omega$ (we keep the same notation for more simplicity) and we use Lemma \ref{lem:classique} in order to control the translation by the norm $\norm{.}_{1,h}$ : 
$$ \tau \sum \limits_{n=1}^{T/\tau} \norm{\Delta_{\xi} c_h^n }^2_{L^2(\R^2)} =  \tau \sum \limits_{n=1}^{T/\tau} \norm{c_h^n(\cdot + \xi) - c_h^n}^2_{L^2(\R^2)} \leq \tau C \sum \limits_{n=1}^{T/\tau} \norm{c_h^n}^2_{1,h} \abs{\xi} (\abs{\xi} + \abs{\mathcal{T}_h}).$$
According to \eqref{4.16}
$$ \tau \sum \limits_{n=1}^{T/\tau} \norm{\Delta_{\xi} c_h^n }^2_{L^2(\R^2)}  \leq C \abs{\xi} (\abs{\xi} + \abs{\mathcal{T}_h}) \Bigl(1+\norm{v}_{L^{\infty}(\Omega)}+\norm{v}^{2}_{L^{\infty}(\Omega)} \Bigr) \leq C \abs{\xi} (\abs{\xi} + \abs{\mathcal{T}_h})$$
which gives the following estimate for $C_h^{\tau}$ : 
$$ \tau \sum \limits_{n=1}^{T/\tau} \norm{C_h^{\tau}(\cdot + \xi) - C_h^{\tau}}^2_{L^2(\R^2)} \leq C \abs{\xi} (\abs{\xi} + \abs{\mathcal{T}_h}). $$
According to the Riesz-Frechet-Kolmogorov compactness theorem,  $\{C_h^{\tau}, h>0, \tau >0 \}$ is compact, which drives the conclusion. 
\end{proof} 
The strong convergence of $C_h^{\tau}$ being obtained, we can now pass to the limit. 
\begin{lem}
\label{lem:solfaibleconv}
The limit $(c^{\#},v_c^{\#})$ is a weak solution of \eqref{problemeP}.
\end{lem}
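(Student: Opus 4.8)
The plan is to mimic the passage to the limit carried out for the semi-discrete problem in Theorem~\ref{thm:solutionfaible}, the one genuinely new feature being that the fully discrete equations \eqref{mixte1bis}--\eqref{mixte2bis} hold only for discrete test functions $(w,u)\in\mathcal{W}_h\times\mathcal{U}_h$, so a continuous test pair must first be projected. Fix $w\in L^2(0,T;H^1_0(\Omega))$ and a smooth $u$ (dense in $H(\divg;\Omega)$), and test \eqref{mixte1bis} with $w_h=P_h w$ and \eqref{mixte2bis} with $u_h=\Pi_h u$. Just as in \eqref{3.20}--\eqref{3.22}, I would rewrite these identities in terms of the time interpolants $C_h^{\tau}$ and ${V_C}_h^{\tau}$, which produces the target continuous-in-time equations plus three kinds of error terms: (a) the time-interpolation discrepancies ${V_C}_h^{\tau}-{v_c}_h^n$ and $C_h^{\tau}-c_h^n$; (b) the reaction mismatch $r(C_h^{\tau})-r(c_h^{n-1})$; and (c) the projection errors $w-P_h w$, $u-\Pi_h u$ and $\divg(u-\Pi_h u)$.

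Next I would pass to the limit in the principal terms. The first equation converges to $\langle R\psi\partial_t c^{\#},w\rangle+\langle\divg(S(v)^{\frac12}v_c^{\#}),w\rangle+\langle r(c^{\#}),w\rangle-\langle p,w\rangle$: the time-derivative and divergence terms are handled by the weak convergences of $\partial_t C_h^{\tau}$ and of $\divg(S(v_h)^{\frac12}{V_C}_h^{\tau})$ from Lemma~\ref{lem:convergencefaibleChtau}, while the reaction term uses the strong $L^2(\Omega_T)$ convergence of $C_h^{\tau}$ (Lemma~\ref{lem:Ccvfort}) together with the boundedness of $r'$, giving $r(C_h^{\tau})\to r(c^{\#})$ strongly. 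For the second equation I would use that $v_h\to v$ strongly in $L^2(\Omega_T)$ with $\|v_h\|_{L^{\infty}}\le C$: up to a subsequence $v_h\to v$ a.e., and since $S_m>0$ keeps $S(\cdot)$ uniformly elliptic, the matrix fields $S(v_h)^{-\frac12}$ and $S(v_h)^{-1}v_h$ converge strongly in every $L^q(\Omega_T)$, $q<\infty$, by continuity and dominated convergence (the bound \eqref{eq5.7.1} controlling $S(v_h)^{-1}v_h$). Pairing these strong limits with the weak limit of ${V_C}_h^{\tau}$ and the strong limit of $C_h^{\tau}$ then yields \eqref{3.24bis}.

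I would then show the three families of error terms vanish. The time-interpolation terms (a) are estimated exactly as $I_1,I_4$ in Theorem~\ref{thm:solutionfaible}: Cauchy--Schwarz together with the stability bound \eqref{stabilite1} on $\sum_n\|{v_c}_h^n-{v_c}_h^{n-1}\|^2$ produces an $O(\tau^{1/2})$ factor. The reaction mismatch (b) is controlled as $I_2$ there, using $\|r'\|_{L^{\infty}}$ and \eqref{stabilite1} to get an $O(\tau)$ bound. The projection errors (c) are new and are precisely where \eqref{estimationsprojection} enters: each is $O(h)$ times a norm of a uniformly bounded discrete quantity (controlled by \eqref{CFLc} and \eqref{stabilite1}), hence vanishes as $h\to0$. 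Letting $(\tau,h)\to(0,0)$ thus leaves \eqref{3.20ter} and \eqref{3.24bis}; since smooth $u$ are dense in $H(\divg;\Omega)$ and $H^1_0$-test functions suffice in the first equation, $(c^{\#},v_c^{\#})$ is a weak solution of \eqref{problemeP}.

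The hard part will be the simultaneous limit $(\tau,h)\to(0,0)$ in the variable-coefficient products of the second equation, where the $h$-dependent coefficient $S(v_h)^{-\frac12}$ multiplies the only weakly convergent factor ${V_C}_h^{\tau}$, the whole being tested against the moving discrete function $\Pi_h u$. One must verify that the strong convergences $S(v_h)^{-\frac12}\Pi_h u\to S(v)^{-\frac12}u$ and $S(v_h)^{-1}v_h\,\Pi_h u\to S(v)^{-1}v\,u$ in $L^2(\Omega_T)$ (which rest on $v_h\to v$ a.e.\ with $S_m>0$, the bound \eqref{eq5.7.1}, and \eqref{estimationsprojection}) upgrade the weak$\times$strong pairings to convergence of the products, with no spurious interaction between the two discretization parameters surviving. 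Writing these coefficient-times-test-function differences explicitly and bounding them in $L^2(\Omega_T)$ is the delicate bookkeeping step.
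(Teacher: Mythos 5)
Your proposal is correct and follows essentially the same route as the paper: project the test functions $w_h=P_h w$, $u_h=\Pi_h u$ (the paper takes $u\in L^2(0,T;H^2(\Omega))$, which plays the role of your ``smooth'' test functions so that the $\divg(u-\Pi_h u)$ estimate in \eqref{estimationsprojection} applies), split the discrete equations into the target terms plus exactly your three families of errors, and kill them with \eqref{stabilite1}, \eqref{CFLc} and \eqref{estimationsprojection}, while the variable-coefficient products in the second equation are handled in the paper by writing $v_h=v+(v_h-v)$ and using $v_h\to v$ in $L^2(\Omega_T)$, a.e., with the uniform $L^\infty$ bound --- the same mechanism as your dominated-convergence argument.
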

\begin{proof}
Function $(C^{\tau},V_C^{\tau})$ satisfies \eqref{3.20} which is recall below: 
\begin{multline}
\label{5.76bis}
 \langle R \psi  \partial_t C^{\tau}_h, w \rangle +  \langle \divg(S(v_h)^{\frac{1}{2}} {V_C}^{\tau}_h), w \rangle + \langle  r(C^{\tau}_h), w \rangle -  \langle p , w \rangle= \\ 
 \langle \divg(S(v_h)^{\frac{1}{2}} ({V_C}^{\tau}_h -  {v_c}_h^n)), w  \rangle  + \langle  r(C^{\tau}_h) - r(c^{n-1}_h), w \rangle 
 \end{multline}
for all $w \in L^2(0,T;H^1_0(\Omega))$. It thus also satisfies 
\begin{multline*}
 \langle R \psi  \partial_t C^{\tau}_h, w-w_h \rangle +  \langle \divg(S(v_h)^{\frac{1}{2}} {V_C}^{\tau}_h), w-w_h \rangle + \langle  r(C^{\tau}_h), w -w_h \rangle -  \langle p , w-w_h  \rangle= \\ 
 \langle \divg(S(v_h)^{\frac{1}{2}} ({V_C}^{\tau}_h -  {v_c}_h^n)), w-w_h  \rangle  + \langle  r(C^{\tau}_h) - r(c^{n}_h), w-w_h \rangle 
 \end{multline*}
meaning 
\begin{multline}
\label{5.76ter}
 \langle R \psi  \partial_t C_h^{\tau}, w-w_h \rangle +  \langle \divg(S(v_h)^{\frac{1}{2}} {V_C}^{\tau}_h), w-w_h \rangle + \langle  r(C^{\tau}_h), w -w_h \rangle -  \langle p , w-w_h  \rangle \\ 
+ \langle \divg(S(v_h)^{\frac{1}{2}} ({V_C}^{\tau}_h -  {v_c}^n_h)), w_h-w  \rangle  + \langle  r(C^{\tau}_h) - r(c^{n}_h), w_h-w \rangle = 0
 \end{multline} 
for all $w \in L^2(0,T;H^1_0(\Omega))$ and where $w_h$ is the projection of $w$ on $\mathcal{W}_h$, $w_h=P_h w$, previously introduced. By adding \eqref{5.76ter} to the right member of \eqref{5.76bis}, we get 
\begin{align*}
&\int_{0}^{T} (R \psi \partial_t C_h^{\tau},w) \dt  + \int_{0}^{T} (\divg(S(v_h)^{\frac{1}{2}} {V_C}_h^{\tau}),w) \dt + \int_{0}^{T} (r(C_h^{\tau}),w) \dt - \int_{0}^{T} (p ,w) \dt  =&&\\
&\sum \limits_{n=1}^{T/\tau} \int_{t_{n-1}}^{t_n} (R \psi \partial_t C_h^{\tau}, w - w_h) \dt + \sum \limits_{n=1}^{T/\tau} \int_{t_{n-1}}^{t_n} (\divg(S(v_h)^{\frac{1}{2}} {V_C}_h^{\tau}) - \divg(S(v_h)^{\frac{1}{2}} {v_c}_h^{n}) , w ) \dt && \\
& +  \sum \limits_{n=1}^{T/\tau} \int_{t_{n-1}}^{t_n} (\divg(S(v_h)^{\frac{1}{2}} {V_C}_h^{\tau}) , w -w_h ) \dt && \\
&+ \sum \limits_{n=1}^{T/\tau} \int_{t_{n-1}}^{t_n} (\divg(S(v_h)^{\frac{1}{2}} {V_C}_h^{\tau}) - \divg(S(v_h)^{\frac{1}{2}} {v_c}_h^{n}) , w_h -w) \dt  &&\\
&+ \sum \limits_{n=1}^{T/\tau} \int_{t_{n-1}}^{t_n} (r(C_h^{\tau})-r(c_h^n) , w ) \dt + \sum \limits_{n=1}^{T/\tau} \int_{t_{n-1}}^{t_n} (r(C_h^{\tau}) , w-w_h ) \dt && \\
& + \sum \limits_{n=1}^{T/\tau} \int_{t_{n-1}}^{t_n} (r(C_h^{\tau})-r(c_h^n) , w_h -w ) \dt -  \sum \limits_{n=1}^{T/\tau} \int_{t_{n-1}}^{t_n} (p , w-w_h ) \dt.&&
\end{align*}
Noticing that we assume a $H^1$ regularity in space for the test functions $w$. We will use it in order to control terms $\norm{w-w_h}_{L^2(\Omega_T)}$ thanks to \eqref{estimationsprojection}. \\
We denoted terms of the right members respectively $I_i$ with $i=1,...,8$. We get:
\begin{eqnarray*}
\abs{I_1} \leq R \psi_{+} \norm{\partial_t C_h^{\tau}}_{L^2(\Omega_T)} \Bigl( \sum \limits_{n=1}^{T/\tau} \int_{t_{n-1}}^{t_n}  \norm{w -w_h}^2_{L^2(\Omega)} \dt \Bigr)^{\frac{1}{2}}.
\end{eqnarray*}
According to \eqref{majorationCntau2} from Lemme \ref{lem:majorationCntau} and the proprieties of the projection operator \eqref{estimationsprojection}, we get
$$\abs{I_1} \leq C  \Bigl( \sum \limits_{n=1}^{T/\tau} \int_{t_{n-1}}^{t_n}  \norm{w -w_h}^2_{L^2(\Omega)} \dt \Bigr)^{\frac{1}{2}} \leq C h \norm{w}_{L^2(0,T;H^1(\Omega))} \leq C h \rightarrow  0 \text{ lorsque $h\rightarrow 0$}$$
because  $w \in L^2(0,T;H^1(\Omega))$. \\
As $t_n = t_{n-1} + \tau$, we get $Z_h^{\tau} - z_h^{\tau} = (t-t_n)(z_h^n - z_h^{n-1})/ \tau$ and $\abs{{V_C}_h^{\tau} - {v_c}_h^n} \leq C \abs{{v_c}_h^n - {v_c}_h^{n-1}}$. Thus, after a integration by parts (with no boundary term because $w \in H^1_0(\Omega)$): 
\begin{eqnarray*}
\abs{I_2} \leq \Bigl( \sum \limits_{n=1}^{T/\tau} \tau \norm{S(v_h)^{\frac{1}{2}} ({v_c}_h^{n} -{v_c}_h^{n-1})}^2_{L^2(\Omega)} \dt \Bigr)^{\frac{1}{2}} \Bigl( \sum \limits_{n=1}^{T/\tau} \int_{t_{n-1}}^{t_n} \norm{\nabla w}^2_{L^2(\Omega)} \dt \Bigr)^{\frac{1}{2}}.
\end{eqnarray*}
According to \eqref{stabilite1}, we get
\begin{eqnarray*}
\abs{I_2} \leq \tau^{\frac{1}{2}} \norm{(S_m + \alpha_L \abs{v_h})^{\frac{1}{2}}}_{L^{\infty}(\Omega)} C \Bigl(1+\norm{v}_{L^{\infty}(\Omega)}+\norm{v}^{2}_{L^{\infty}(\Omega)} \Bigr)^{\frac{1}{2}} \norm{\nabla w}_{(L^2(\Omega_T))^N}.
\end{eqnarray*}
As $v_h \rightarrow v$ strongly in $L^2(\Omega)$, we can pass to the limit. We get
$$  \abs{I_2} = C \tau^{\frac{1}{2}} \Bigl(1+\norm{v}_{L^{\infty}(\Omega)}+\norm{v}^{2}_{L^{\infty}(\Omega)} \Bigr)^{\frac{1}{2}} \norm{w}_{L^2(0,T;H^1(\Omega))} \leq C \tau^{\frac{1}{2}}  \rightarrow 0 \text{ when $\tau \rightarrow 0$.}$$
According to \eqref{stabilite1} and \eqref{estimationsprojection},
\begin{align*}
\abs{I_3} &\leq \Bigl(C  \sum \limits_{n=1}^{T/\tau} \tau \norm{S(v_h)^{\frac{1}{2}} ({v_c}_h^{n} -{v_c}_h^{n-1})}^2_{L^2(\Omega)} \dt \Bigr)^{\frac{1}{2}} \Bigl( \sum \limits_{n=1}^{T/\tau} \int_{t_{n-1}}^{t_n} \norm{\nabla (w-w_h)}^2_{L^2(\Omega)} \dt \Bigr)^{\frac{1}{2}} &&\\
&\leq \tau^{\frac{1}{2}} \norm{(S_m + \alpha_L \abs{v_h})^{\frac{1}{2}}}_{L^{\infty}(\Omega)} C\Bigl(1+\norm{v}_{L^{\infty}(\Omega)}+\norm{v}^{2}_{L^{\infty}(\Omega)} \Bigr)^{\frac{1}{2}}  \norm{w-w_h}_{L^2(0,T;H^1(\Omega))}  &&\\
&\leq C \tau^{\frac{1}{2}} \rightarrow 0 \text{ when $h \rightarrow 0$ and $\tau \rightarrow 0$}&&
\end{align*}
because $w-w_h \in L^2(0,T;H^1(\Omega))$. \\
Similarly, 
\begin{align*}
\abs{I_4} &\leq \Bigl(C  \sum \limits_{n=1}^{T/\tau} \tau \norm{S(v_h)^{\frac{1}{2}} ({v_c}_h^{n} -{v_c}_h^{n-1})}^2_{L^2(\Omega)} \dt \Bigr)^{\frac{1}{2}} \Bigl( \sum \limits_{n=1}^{T/\tau} \int_{t_{n-1}}^{t_n} \norm{\nabla (w_h -w)}^2_{L^2(\Omega)} \dt \Bigr)^{\frac{1}{2}} &&\\
&\leq  \tau^{\frac{1}{2}} \norm{(S_m + \alpha_L \abs{v_h})^{\frac{1}{2}}}_{L^{\infty}(\Omega)} C \Bigl(1+\norm{v}_{L^{\infty}(\Omega)}+\norm{v}^{2}_{L^{\infty}(\Omega)} \Bigr)^{\frac{1}{2}} \norm{w_h -w}_{L^2(0,T;H^1(\Omega))}&& \\
&\leq C \tau^{\frac{1}{2}} \rightarrow 0 \text{ when $h \rightarrow 0$ and $\tau \rightarrow 0$.}&&
\end{align*}
For the isoterm, we get
\begin{eqnarray*}
\abs{I_5} \leq \Bigl( C \sum \limits_{n=1}^{T/\tau} \tau \norm{r(C_h^{\tau})-r(c_h^{n-1})}^2_{L^2(\Omega)} \Bigr)^{\frac{1}{2}} \Bigl( \sum \limits_{n=1}^{T/\tau} \int_{t_{n-1}}^{t_n} \norm{w}^2_{L^2(\Omega)} \dt \Bigr)^{\frac{1}{2}}.
\end{eqnarray*}
As $r$ is supposed to be derivable with a bounded derivate, we get 
\begin{align*}
\abs{I_5} &\leq \tau^{\frac{1}{2}} \norm{r'}_{L^{\infty}(\R)} \Bigl( \sum \limits_{n=1}^{T/\tau}  \norm{C_h^{\tau} -c_h^{n-1}}^2_{L^2(\Omega)}  \Bigr)^{\frac{1}{2}} \Bigl( \sum \limits_{n=1}^{T/\tau} \int_{t_{n-1}}^{t_n} \norm{w}^2_{L^2(\Omega)} \dt \Bigr)^{\frac{1}{2}} &&\\
&\leq C \tau^{\frac{1}{2}} \norm{r'}_{L^{\infty}(\R)} \Bigl( \sum \limits_{n=1}^{T/\tau} \norm{c_h^{n} -c_h^{n-1}}^2_{L^2(\Omega)} \dt \Bigr)^{\frac{1}{2}} \norm{w}_{L^2(\Omega_T)}.&&
\end{align*}
According to \eqref{stabilite1}, we get
\begin{align*}
\abs{I_5} &\leq C \tau^{\frac{1}{2}} \norm{r'}_{L^{\infty}(\R)} \Bigl( \tau \Bigl(1+\norm{v}_{L^{\infty}(\Omega)}+\norm{v}^{2}_{L^{\infty}(\Omega)} \Bigr) \Bigr)^{\frac{1}{2}}  \norm{w}_{L^2(\Omega_T)}&& \\
&\leq C \tau \norm{r'}_{L^{\infty}(\R)}   \Bigl(1+\norm{v}_{L^{\infty}(\Omega)}+\norm{v}^{2}_{L^{\infty}(\Omega)} \Bigr)^{\frac{1}{2}} \norm{w}_{L^2(\Omega_T)} &&\\
&\leq C \tau \rightarrow 0 \text{ when $\tau \rightarrow 0$.}&&
\end{align*}
Similarly, 
\begin{align*}
\abs{I_6}& \leq \Bigl( C \sum \limits_{n=1}^{T/\tau} \tau \norm{r(C_h^{\tau})}^2_{L^2(\Omega)} \dt \Bigr)^{\frac{1}{2}} \Bigl( \sum \limits_{n=1}^{T/\tau} \int_{t_{n-1}}^{t_n} \norm{w -w_h}^2_{L^2(\Omega)} \dt \Bigr)^{\frac{1}{2}} &&\\
&\leq C h \norm{w}_{H^1(\Omega)} \leq C h \rightarrow 0 \text{ when $h \rightarrow 0$,}&&
\end{align*}
\begin{align*}
\abs{I_7} &\leq \Bigl( C \sum \limits_{n=1}^{T/\tau} \tau \norm{r(C_h^{\tau}) -r(c_h^n)}^2_{L^2(\Omega)} \dt \Bigr)^{\frac{1}{2}} \Bigl( \sum \limits_{n=1}^{T/\tau} \int_{t_{n-1}}^{t_n} \norm{w_h -w}^2_{L^2(\Omega)} \dt \Bigr)^{\frac{1}{2}} &&\\
&\leq C \tau \norm{r'}_{L^{\infty}(\R)}  \Bigl(1+\norm{v}_{L^{\infty}(\Omega)}+\norm{v}^{2}_{L^{\infty}(\Omega)} \Bigr)^{\frac{1}{2}}  \norm{w_h -w}_{L^2(\Omega_T)}&&\\
& \leq C\tau \rightarrow 0 \text{ when $h \rightarrow 0$, and $\tau \rightarrow 0$}&&
\end{align*}
and
\begin{align*}
\abs{I_8} \leq \norm{p+\gamma}_{L^2(\Omega)} \Bigl( \sum \limits_{n=1}^{T/\tau} \int_{t_{n-1}}^{t_n} \norm{w_h -w}^2_{L^2(\Omega)} \dt \Bigr)^{\frac{1}{2}} & \leq C h \norm{w}_{H^1(\Omega)} &&\\
&\leq C h \rightarrow 0 \text{ when $h \rightarrow 0$.}&&
\end{align*}

We now focus on the following equation 
\begin{align}
\label{derniere}
&\int_{0}^{T} (S(v_h)^{-\frac{1}{2}}\psi^{-1} {V_C}_h^{\tau},u) \dt - \int_{0}^{T} (C_h^{\tau},\divg u) \dt - \int_{0}^{T} (S(v_h)^{-1}\psi^{-1} v_h C_h^{\tau},u) \dt  \nonumber &&\\
&=\sum \limits_{n=1}^{T/\tau} \int_{t_{n-1}}^{t_n} (S(v_h)^{-\frac{1}{2}}\psi^{-1} ({V_C}_h^{\tau} - {v_c}^n),u) \dt + \sum \limits_{n=1}^{T/\tau} \int_{t_{n-1}}^{t_n} (S(v_h)^{-\frac{1}{2}}\psi^{-1} {v_c}_h^n,u-u_h) \dt  \nonumber &&\\
&+ \sum \limits_{n=1}^{T/\tau} \int_{t_{n-1}}^{t_n} (c_h^n-C_h^{\tau},\divg u) \dt  + \sum \limits_{n=1}^{T/\tau} \int_{t_{n-1}}^{t_n} (c^n,\divg (u_h -u)) \dt \nonumber &&\\
&+ \sum \limits_{n=1}^{T/\tau} \int_{t_{n-1}}^{t_n} (v_h(c_h^n-C_h^{\tau}),u) \dt + \sum \limits_{n=1}^{T/\tau} \int_{t_{n-1}}^{t_n} (v_h c_h^n,u_h -u) \dt &&
\end{align}
for all $u\in L^2(0,T;H^2(\Omega))$ and $u_h = \Pi_h (u)$. The left member converges to the desired limit. Indeed, 
\begin{align*}
 \int_{0}^{T} (S(v_h)^{-1}\psi^{-1} v_h C_h^{\tau},u) \dt =& \int_{0}^{T} (S(v_h)^{-1}\psi^{-1} v C_h^{\tau},u) \dt &&\\
 &+ \int_{0}^{T} (S(v_h)^{-1}\psi^{-1} (v_h -v)C_h^{\tau},u) \dt.&&
 \end{align*}
As $v \in L^{\infty}(\Omega)$, the first term of the right member converges to the desired limit thanks to the strong convergence of$v_h$ to $v$ in $L^2(\Omega_T)$ and almost everywhere. Similarly, the second right term removed because $v_h -v \in (L^{\infty}(\Omega))^N$. \\ 
It remains to prove that the right member of \eqref{derniere} removed at the limit. We denoted the terms by $I_i$, $i=1,...,6$. We get, 
\begin{align*} 
\abs{I_1} &\leq C \norm{u}_{L^2(0,T;L^2(\Omega))} \Bigl( \sum \limits_{n=1}^{T/\tau} \tau \norm{S(v_h)^{-\frac{1}{2}} \psi^{-1} ({v_c}_h^n - {v_c}_h^{n-1})}^2_{L^2(\Omega_T)} \Bigr)^{\frac{1}{2}} &&\\
&\leq  \norm{u}_{L^2(0,T;L^2(\Omega))}  \tau^{\frac{1}{2}} C \Bigl(1+\norm{v}_{L^{\infty}(\Omega)}+\norm{v}^{2}_{L^{\infty}(\Omega)} \Bigr)^{\frac{1}{2}} S_m^{-\frac{1}{2}} \psi_{-}^{-1} &&\\
&\leq C \tau^{\frac{1}{2}} \rightarrow 0 \text{ when $\tau \rightarrow 0$,}&&
\end{align*}
because $v_h$ is bounded in $L^{\infty}(\Omega)$ and thanks to \eqref{stabilite1}. Moreover, according to \eqref{stabilite1}, 
\begin{align*} 
\abs{I_2} &\leq \Bigl(\sum \limits_{n=1}^{T/\tau} \tau \norm{S(v_h)^{-\frac{1}{2}} \psi^{-1} {v_c}_h^n }^2_{L^2(\Omega)} \Bigr)^{\frac{1}{2}} \Bigl(\sum \limits_{n=1}^{T/\tau}  \norm{u-u_h}^2_{L^2(\Omega)} \Bigr)^{\frac{1}{2}} &&\\
&\leq \tau^{\frac{1}{2}} \psi_{-}^{-1} S_m^{-\frac{1}{2}} \sup \norm{{v_c}^n_h}_{L^2(\Omega)} \Bigl(  \sum \limits_{n=1}^{T/\tau} 1 \Bigr)^{\frac{1}{2}} \norm{u-u_h}_{L^2(\Omega)} &&\\
&\leq S_m^{-1} \psi_{-}^{-1}  \Bigl(1+\norm{v}_{L^{\infty}(\Omega)}+\norm{v}^{2}_{L^{\infty}(\Omega)} \Bigr)^{\frac{1}{2}} \norm{u-u_h}_{L^2(\Omega)} &&\\
&\leq C h \norm{u}_{H^1(\Omega)} \leq Ch.&&
\end{align*}
Thus
$$  \abs{I_2}  \rightarrow  0 \text{ when $h \rightarrow 0$.}$$
Similarly, also thanks to \eqref{stabilite1}, 
\begin{align*} 
\abs{I_3} &\leq \Bigl(\sum \limits_{n=1}^{T/\tau} \tau \norm{c_h^n - c_h^{n-1} }^2_{L^2(\Omega)} \Bigr)^{\frac{1}{2}} \Bigl(\sum \limits_{n=1}^{T/\tau} \norm{\divg(u)}^2_{L^2(\Omega)} \Bigr)^{\frac{1}{2}} &&\\
&\leq \tau C\Bigl(1+\norm{v}_{L^{\infty}(\Omega)}+\norm{v}^{2}_{L^{\infty}(\Omega)} \Bigr)^{\frac{1}{2}} \Bigl(\sum \limits_{n=1}^{T/\tau} \norm{\divg(u)}^2_{L^2(\Omega)} \Bigr)^{\frac{1}{2}} &&\\
&\leq C \tau^{\frac{1}{2}} \rightarrow  0 \text{ when $\tau \rightarrow 0$,}&&
\end{align*}
because $u \in L^2(0,T;H(\divg;\Omega))$. \\
According to \eqref{CFLc} and \eqref{estimationsprojection}, we get
\begin{align*} 
\abs{I_4} &\leq \Bigl(\sum \limits_{n=1}^{T/\tau} \tau \norm{c_h^n }^2_{L^2(\Omega)} \Bigr)^{\frac{1}{2}} \Bigl(\sum \limits_{n=1}^{T/\tau}  \norm{\divg(u-u_h)}^2_{L^2(\Omega)} \Bigr)^{\frac{1}{2}}&& \\
&\leq  C h \norm{u}_{H^2(\Omega)}  \leq C h \rightarrow  0 \text{ when $h \rightarrow 0$.}&&
\end{align*}
As $\norm{v_h}_{(L^{\infty}(\Omega))^N} \leq C$, thanks to \eqref{stabilite1} we get
\begin{align*} 
\abs{I_5} &\leq C \Bigl(\sum \limits_{n=1}^{T/\tau} \tau \norm{c_h^n -c_h^{n-1}}^2_{L^2(\Omega)} \Bigr)^{\frac{1}{2}} \norm{u}_{L^2(0,T;L^2(\Omega))}&& \\
&\leq \tau C \Bigl(1+\norm{v}_{L^{\infty}(\Omega)}+\norm{v}^{2}_{L^{\infty}(\Omega)} \Bigr)^{\frac{1}{2}}  \norm{u}_{L^2(0,T;L^2(\Omega))} \leq C \tau  \rightarrow  0 \text{ when $\tau \rightarrow 0$.}&&
\end{align*}
Similarly, 
\begin{align*} 
\abs{I_6} &\leq C \Bigl(\sum \limits_{n=1}^{T/\tau} \tau \norm{c_h^n}^2_{L^2(\Omega)} \Bigr)^{\frac{1}{2}}  \Bigl(\sum \limits_{n=1}^{T/\tau} \norm{u-u_h}^2_{L^2(\Omega)} \Bigr)^{\frac{1}{2}} &&\\
&\leq C h \norm{u}^2_{H^1(\Omega)}  \leq C h \rightarrow  0 \text{ when $h \rightarrow 0$.}&&
\end{align*}
\end{proof}

\begin{thm}
\label{thm:cvcompletementdiscret}
The sequence $(C^{\tau}_h,{V_C}^{\tau}_h)$ converges to the solution $(c,v_c)$ de \eqref{problemeP}.
\end{thm}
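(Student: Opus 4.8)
The plan is simply to assemble the compactness and identification results proved above — in direct analogy with Theorem \ref{thm:solutionfaible} for the semi-discrete scheme — and then to invoke uniqueness of the weak solution of \eqref{problemeP} to upgrade subsequential convergence into convergence of the whole family. First I would recall that Lemma \ref{lem:majorationCntau} provides bounds, uniform in both $\tau$ and $h$, on $C_h^{\tau}$ in $L^2(\Omega_T)$, on $\partial_t C_h^{\tau}$ in $L^2(\Omega_T)$, and on $\divg(\psi^{-1} S(v_h)^{\frac{1}{2}} {V_C}_h^{\tau})$ in $L^2(\Omega_T)$. Together with Lemma \ref{lem:convergencefaibleChtau}, these bounds extract a subsequence $(\tau,h) \to (0,0)$ along which $C_h^{\tau} \rightharpoonup c^{\#}$, $\partial_t C_h^{\tau} \rightharpoonup \partial_t c^{\#}$, ${V_C}_h^{\tau} \rightharpoonup v_c^{\#}$ and $\divg(S(v_h)^{\frac{1}{2}} {V_C}_h^{\tau}) \rightharpoonup \divg(S(v)^{\frac{1}{2}} v_c^{\#})$ weakly in $L^2$; the last of these already incorporates the strong convergence $v_h \to v$ in $(L^2(\Omega_T))^N$ from Proposition \ref{prop:convBernardi}. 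Lemma \ref{lem:Ccvfort} then promotes the first convergence to a strong one, $C_h^{\tau} \to c^{\#}$ in $L^2(\Omega_T)$, which is precisely what is required in order to pass to the limit in the nonlinear reaction term $r(C_h^{\tau})$.

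Next I would invoke Lemma \ref{lem:solfaibleconv}, whose argument shows that along this subsequence each of the residual contributions $I_1,\dots,I_8$ in the concentration equation and $I_1,\dots,I_6$ in the flux equation vanishes as $(\tau,h) \to (0,0)$, so that the pair $(c^{\#}, v_c^{\#})$ satisfies the two limiting variational identities and is therefore a weak solution of \eqref{problemeP}. Compared with the semi-discrete Theorem \ref{thm:solutionfaible}, the only genuinely new ingredient here is the use of the projection estimates \eqref{estimationsprojection} to control the space-discretization defects $w - w_h = w - P_h w$ and $u - u_h = u - \Pi_h u$, through which every term carrying such a defect is shown to tend to zero with $h$, while the remaining terms tend to zero with $\tau$ exactly as in the semi-discrete case.

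Finally, since the weak solution of \eqref{problemeP} is unique, the limit is forced to coincide with the continuous solution, $(c^{\#}, v_c^{\#}) = (c, v_c)$. As this identification is independent of the particular subsequence extracted, the classical sub-subsequence (Urysohn) principle upgrades the result to convergence of the entire family $(C_h^{\tau}, {V_C}_h^{\tau})$ to $(c, v_c)$, which is the assertion of the theorem.

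I expect the only delicate point to be this last step: the passage from subsequential convergence to convergence of the whole family genuinely requires uniqueness of the weak solution of the continuous problem \eqref{problemeP}, a fact that must be borrowed from the well-posedness theory of the model rather than re-established within the discrete analysis. Everything else is a faithful transcription of the semi-discrete proof with the spatial projections $P_h$ and $\Pi_h$ inserted, so no new \emph{a priori} estimate is needed beyond those already collected in Lemmas \ref{lem:majorationCntau}--\ref{lem:solfaibleconv}.
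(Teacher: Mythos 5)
Your proposal is correct and takes essentially the same approach as the paper: its proof likewise combines the previous lemma (Lemma \ref{lem:solfaibleconv}) with the uniqueness of the weak solution of \eqref{problemeP} to identify $(c^{\#},v_c^{\#})=(c,v_c)$ and conclude convergence of the whole sequence. Your explicit invocation of the sub-subsequence (Urysohn) principle merely spells out what the paper leaves implicit.
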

\begin{proof}
By using the result of the previous lemma and the uniqueness of the solution, we get $(c^{\#},v_c^{\#})=(c,v_c)$ and the sequence  $(C^{\tau}_h,{V_C^{\tau}}_h)$ converges to this limit. 
\end{proof}


\section*{References}

\end{document}